\newcommand{\lcirc}{{\raise-0.15ex\hbox{$\scriptscriptstyle \circ$}}}
\newcommand{\lstar}{{\raise-0.15ex\hbox{$\scriptstyle \ast$}}}
\newcommand{\mathds}{\mathbf}
\newcommand{\tphh}{\hat\varphi^{ \odot}}
\newtheorem{theorem}{Theorem}
\newtheorem{lemma}[theorem]{Lemma}
\newtheorem{proposition}[theorem]{Proposition}
\newtheorem{remark}[theorem]{Remark}
\begin{document}
\title{Uniform point variance bounds in classical beta ensembles}
\author{Joseph Najnudel, B\'alint Vir\'ag}
\date{}
\maketitle

\begin{abstract}

In this paper, we give bounds on the variance of the number of points of the Circular and the Gaussian $\beta$ Ensemble in arcs of the unit circle or intervals of the real line. These bounds are logarithmic with respect to the renormalized length of these sets, which is expected to be optimal up to a multiplicative constant depending only on $\beta$. 
\end{abstract}
\section{Introduction}
 In the present article, the two following ensembles are considered:
\begin{itemize}
\item The Circular $\beta$ Ensemble, which consists in  a set of $n$ random points $\lambda_1, \dots, \lambda_n$ on the unit circle, whose joint distribution has a density of the form
$$Z_{n,\beta}^{-1} \prod_{1 \leq j < k \leq n} |\lambda_j - \lambda_k|^{\beta}$$
with respect to the Lebesgue measure on the $n$-th power of the unit circle. 
\item The Gaussian $\beta$ Ensemble, for which the points are on the real line, with density of the form 
$$(Z'_{n,\beta})^{-1} e^{- (\beta/4) \sum_{1 \leq j \leq n} \lambda_j^2} \prod_{1 \leq j < k \leq n} |\lambda_j - \lambda_k|^{\beta}$$
with respect to the Lebesgue measure on $\mathbb{R}^n$. 
\end{itemize}
These ensembles are defined for all positive values of $\beta$, and they correspond to the law of the spectrum of random matrices. 
For $\beta \in \{1,2,4\}$, we get the spectrum of the Circular Orthogonal ($\beta = 1$), Unitary ($\beta = 2$) and Symplectic ($\beta  = 4$) Ensembles, and the Gaussian Orthogonal, Unitary and Symplectic Ensembles, respectively. 
For general $\beta$, Gaussan and Laguerre ensembles have been constructed as spectra of random matrices by Dumitriu and Edelman \cite{DE02} (see also Trotter \cite{Tro84}), and random unitary matrices whose spectrum follows the Circular $\beta$ ensembles have 
been contructed by Killip and Nenciu \cite{bib:KN04}.  

In this article, we study the fluctuations of the distribution of the number of points lying in a given arc (in the Circular case) or a given interval (in the Gaussian case). 
These fluctuations have been first studied in the particular cases $\beta \in \{1,2,4\}$, where the correlation functions of the point processes are explicitly given by exact determinantal or Pfaffian formulas. In \cite{CL95}, Costin and Lebowitz proved that the number of eigenvalues of the Gaussian Orthogonal, Circular or Symplectic Ensemble in an interval
  has Gaussian asymptotic fluctuations when the average number of points tends to infinity with the dimension, the variance being logarithmic with respect to the mean. This result has been extended to more general determinantal point processes by Soshnikov in \cite{Sos00} and \cite{Sos02},  and to more general linear statistics of the eigenvalues: if the test function of the linear stastistics is sufficiently smooth, we can get central limit theorems without normalization, which is unusual in probability theory. This case occurs in particular when we 
consider smooth linear statistics of the Circular Unitary Ensemble, as in Diaconis and Shahshahani \cite{DS94}, and in Diaconis and Evans \cite{DE01}. Moreover, a  central limit theorem has been proven by Gustavsson \cite{Gus05}, for the joint distribution of the position of  finitely many individual eigenvalues of the Gaussian Unitary Ensemble. 

The case of general $\beta$ has been studied later than the case $\beta \in \{1,2,4\}$, and it is much more difficult because no convenient formulas are known for the correlation functions of the point processes.  For the Circular $\beta$ Ensemble, Killip \cite{Kil08} has proven a central limit theorem for the 
number of points in given arcs, the variance being logarithmic in the dimension, and another central limit theorem, with no normalization, has been obtained by Jiang and Matsumoto \cite{JM15} for smooth linear statistics.  For the Gaussian $\beta$ Ensemble and some of its generalizations, central limit theorems have been obtained for  smooth linear statistics, for example by Johansson \cite{Joh98}, by Shcherbina \cite{Shc13}, or by Bekerman, Lebl\'e and Serfaty \cite{BLS18}. Rigidity and a mesoscopic central limit theorem has also been obtained for the Dyson Brownian motion in a paper by Huang and Landon \cite{Huang2018}. 

However, it seems that similar results are not known for the number of points of the Gaussian $\beta$ Ensemble lying in a given interval. In the present paper, 
we prove a bound on the variance of the number of points in intervals, for both the Circular $\beta$ Ensemble and the Gaussian $\beta$ Ensemble. Our result does not provide a central limit theorem, in particular, it does not imply the result by Killip \cite{Kil08}. However, our bound is unconditionally available for all intervals and all values of $n$: it covers microscopic, mesoscopic and macroscopic scales and it does not need that we take a limit when $n$ goes to infinity. If we rescale the interval or the arc in such a way that the average spacing between the points has  order $1$, then the bound we get is logarithmic in the length of the interval we consider, which we expect to be optimal up to a multiplicative constant depending only on $\beta$. 
Moreover, we deduce similar bounds  for the scaling limit of the Circular and the Gaussian $\beta$ Ensemble, called the $\operatorname{Sine}_{\beta}$ point process. 
The existence of a scaling limit has been proven by Killip and Stoiciu in \cite{bib:KSt09} in the Circular case, and by Valk\'o and Vir\'ag \cite{VV} in the Gaussian case: Nakano \cite{Nakano} has then proven that the two scaling limits are the same. 

  The bounds proven in the present article play a crucial role in our companion paper \cite{NV19}, where we construct interlacing  $\operatorname{Sine}_{\beta}$ point processes generalizing  the bead process introduced by Boutillier \cite{Bou09}. Our bounds are also used by Huang \cite{Huang19} in his study of the eigenvalues of the minors of Wigner matrices. 

The proofs of the present article use the same tools as the papers by Killip and Stoiciu \cite{bib:KSt09}, and by Valk\'o and Vir\'ag \cite{VV}: the theory of the Orthogonal Polynomials on the Unit Circle in the Circular case,  the tridiagonal random matrix model by Dumitriu and Edelman and a discrete version of the Brownian carousel in the Gaussian case. 
Our estimates related to the Circular $\beta$ Ensemble and the $\operatorname{Sine}_{\beta}$ process are proven in Section 2. The estimates for the Gaussian $\beta$ Ensemble 
are proven in Section 3, up to four key estimates whose proof is very technical and postponed to Section 4. 
 The Gaussian case is indeed much more difficult to handle than the Circular case. 
\section{Estimates  for the Circular Beta Ensemble and the $\operatorname{Sine}_{\beta}$ process}
Here, we define the  $\operatorname{Sine}_{\beta}$ point  process as the limit in law of the set of arguments of the points of the Circular $\beta$ Ensemble, multiplied by $n$. 
This limit in law has been proven to exist by Killip and Stoiciu \cite{bib:KSt09}. In \cite{VV}, Valk\'o and Virag prove the existence of a similar limit for the 
 Gaussian $\beta$ Ensemble, and in  \cite{Nakano}, Nakano shows that this limit is also $\operatorname{Sine}_{\beta}$. 
The main result of this section is the following:
\begin{theorem} \label{estimatequadraticCbeta}
The number of points of the Circular $\beta$ Ensemble of order $n$ in an arc $I$ of the unit circle 
has a variance bounded by $C_{\beta} \log (2 + n|I|)$, $|I|$ being the length of the arc and $C_{\beta} > 0$ depending only on $\beta$. 
Moreover, the variance of the number of points of the $\operatorname{Sine}_{\beta}$ process in an interval $I$ is bounded by  $C_{\beta} \log (2 + |I|)$. 
\end{theorem}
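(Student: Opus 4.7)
The plan is to work with the Killip--Nenciu tridiagonal (CMV) representation of the Circular $\beta$ Ensemble, in which the Verblunsky coefficients $\alpha_0, \ldots, \alpha_{n-1}$ are independent and rotationally invariant, with $|\alpha_k|^2$ Beta-distributed with parameters so that $\mathbb{E}|\alpha_k|^2$ is of order $1/(\beta(n-k))$ (and $\alpha_{n-1}$ uniform on the unit circle). Following Killip--Stoiciu, for each angle $\theta$ one introduces a relative Prüfer phase $\psi_k(\theta)$ by the recursion
\[
\psi_{k+1}(\theta) = \psi_k(\theta) + \theta - 2\,\mathrm{Im}\,\log\bigl(1 - \bar{\alpha}_k e^{i(k\theta + \psi_k(\theta))}\bigr),
\]
so that the counting function $N(\theta)=\#\{j : \arg\lambda_j \in (0,\theta]\}$ satisfies $|N(\theta) - \psi_n(\theta)/(2\pi)| \leq 1$. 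In particular, the number of points in an arc $I=(a,b)$ differs from $(\psi_n(b)-\psi_n(a))/(2\pi)$ by at most $2$, so it suffices to bound the variance of $\psi_n(b)-\psi_n(a)$.

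To this end, I would telescope $\psi_n(b)-\psi_n(a) = n(b-a) + \sum_{k=0}^{n-1} D_k$, where $D_k = F(\alpha_k, \phi_k(b)) - F(\alpha_k, \phi_k(a))$ with $F(\alpha,\phi) = -2\,\mathrm{Im}\,\log(1 - \bar\alpha e^{i\phi})$ and $\phi_k(\theta) = k\theta + \psi_k(\theta)$. Because $\alpha_k$ is independent of $\mathcal{F}_k = \sigma(\alpha_0,\dots,\alpha_{k-1})$ and rotationally invariant, $\mathbb{E}[F(\alpha_k,\phi)\mid\mathcal{F}_k]$ does not depend on $\phi$, so $\mathbb{E}[D_k \mid \mathcal{F}_k] = 0$ and the $D_k$ form a martingale difference sequence. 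Hence the variance reduces to $\sum_k \mathbb{E}[D_k^2]$, and expanding $F$ in the convergent power series for $\mathrm{Im}\,\log(1-z)$ (with sub-exponential tails of the Beta density near $1$ absorbing the large-$|\alpha_k|$ region) yields a pointwise bound
\[
\mathbb{E}\bigl[D_k^2 \,\big|\, \mathcal{F}_k\bigr] \leq C_\beta\, \mathbb{E}|\alpha_k|^2 \cdot \min\bigl(4,\,|\phi_k(b)-\phi_k(a)|^2\bigr).
\]

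After using a reflection symmetry of the ensemble to replace $\mathbb{E}|\alpha_k|^2$ by an upper bound of order $1/(\beta k +1)$, the variance is controlled by $C_\beta \sum_{k=1}^{n} \frac{1}{k}\, \mathbb{E}\bigl[\min(4, |\phi_k(b)-\phi_k(a)|^2)\bigr]$. Splitting at $k \asymp 1/(b-a)$: in the low regime $k(b-a)\leq 1$ the deterministic part of $\phi_k(b)-\phi_k(a) = k(b-a) + (\psi_k(b)-\psi_k(a))$ dominates and the contribution telescopes to $O(1)$; in the high regime $k(b-a)\geq 1$ the trivial bound by $4$ gives $\sum 1/k = O(\log(n(b-a)))$. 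The bound $C_\beta \log(2+n|I|)$ follows. The $\operatorname{Sine}_\beta$ estimate is then obtained by applying the CβE bound to the rescaled arc of length $|I|/n$ and passing to the limit, using that the variance bound itself provides the uniform tightness needed to upgrade weak convergence to convergence of second moments. The main obstacle is the self-reference inherent in the estimate above: $\psi_k(b)-\psi_k(a)$ appears inside the variance that we are trying to bound at step $n$. I would close this by a bootstrap, first establishing a crude a priori bound (for instance $\mathbb{E}|\psi_k(b)-\psi_k(a)|^2 \leq C_\beta k |b-a|$) and then feeding it back into the splitting to recover the logarithmic bound with a universal constant depending only on $\beta$.
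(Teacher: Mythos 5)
Your framework---Pr\"ufer phases, the martingale property coming from rotational invariance of the Verblunsky coefficients, reduction of the point count to the terminal phase---is the same as the paper's. But there is a critical gap at the step where you ``use a reflection symmetry of the ensemble to replace $\mathbb{E}|\alpha_k|^2$ by an upper bound of order $1/(\beta k+1)$.'' No such symmetry exists: in the Killip--Nenciu model $|\alpha_k|^2\sim\mathrm{Beta}(1,\tfrac{\beta}{2}(n-k-1))$, so $\mathbb{E}|\alpha_k|^2\asymp 1/(\beta(n-k))$, and the law of the coefficient sequence is not invariant under index reversal (reversing the Verblunsky coefficients changes the spectral measure). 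Your splitting arithmetic depends essentially on the reversed indexing: with the correct indexing the large coefficients sit at the \emph{end} of the recursion, exactly where $|\phi_k(b)-\phi_k(a)|\approx k(b-a)\approx n|I|$ is no longer small, so your low-regime sum contributes roughly $\min(4,(n|I|)^2)\sum_k 1/(\beta(n-k))$, which picks up an extra $\log n$ rather than $O(1)$ for a microscopic arc. This is precisely the difficulty the paper's argument is organized around: it cuts the recursion at $k=n-\lfloor n/(1+x)\rfloor$ (with $x=n|I|$), uses the crude quadratic-variation bound $\sum_{j<k}1/(n-j-1)=\log(1+x)+O(1)$ for the head, and then controls the entire final block of $\sim n/(1+x)$ steps at once via a renewal property (conditionally, the tail of the recursion is a fresh Pr\"ufer recursion of length $n-k$ run at an angle $\theta\le 1/(n-k)$) combined with the exponential tail estimate $\mathbb{P}[\psi^{(n)}_k(\theta,a)\ge a+b]\le 12\,e^{-b/12}$, which gives a uniformly bounded variance for that block. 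Nothing in your proposal plays the role of this lemma.

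The self-reference you flag at the end is real, but the proposed bootstrap does not close as described: the a priori input $\mathbb{E}|\psi_k(b)-\psi_k(a)|^2\le C_\beta k|b-a|$ is itself unproven, and the only unconditional increment bound available, $\mathbb{E}[D_k^2]\lesssim 1/(\beta(n-k))$, accumulates to $\log(n/(n-k))$, not to $k|b-a|$. The paper sidesteps the two-point difference entirely: by rotation invariance it reduces to the arc from $1$ to $e^{ix/n}$, where $\psi^{(n)}_{n-1}(0)=0$, so only a single phase must be controlled and no refined $\min(4,|\phi_k(b)-\phi_k(a)|^2)$ bound is needed. Finally, for $\operatorname{Sine}_\beta$ your idea is right in spirit, but an $L^2$ bound alone does not upgrade weak convergence to convergence of second moments; the paper only needs an upper bound on the limiting variance, obtained from Skorokhod representation and Fatou's lemma.
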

\begin{proof}
In our proof of the theorem, we start with the following result, which has been proven in 
 \cite{bib:KSt09}, by using the theory of the Orthogonal Polynomials on the Unit Circle.
 \begin{lemma}
 Let 
$(\gamma^{(n)}_j)_{0 \leq j \leq n-2}$ be random variables  on the unit
disc, whose density with respect to
the uniform probability measure is $(\beta/2)(n-j-1) (1-|\gamma^{(n)}_j|^2)^{(\beta/2)(n-j-1) - 1}$, and let 
$\eta$ be a uniform variable on $[0, 2\pi)$, independent
 of $(\gamma^{(n)}_j)_{0 \leq j \leq n-2}$. 
 We define the so-called {\it Pr\"ufer phases} $(\psi^{(n)}_k(\theta))_{\theta \in \mathbb{R}, 0 \leq k \leq n-1}$ as follows: 
 $\psi^{(n)}_0(\theta) = \theta$ and
for $0 \leq k \leq n-2$,
$$\psi^{(n)}_{k+1} (\theta) = \psi^{(n)}_k(\theta) + \theta +
2 \Im \log \left( \frac{1- \gamma^{(n)}_k}{1 - \gamma^{(n)}_k
e^{i \psi^{(n)}_k(\theta)}}  \right),$$
where one takes the principal branch of the logarithm, which is not ambiguous since one easily checks that the quantity inside the logarithm cannot be in $\mathbb{R}_-$. 
Then,  the random set
$$\{\theta \in \mathbb{R}, \psi^{(n)}_{n-1} (\theta)
\equiv \eta \; (\operatorname{mod } 2 \pi) \}$$
has the same law as the set of all determinations of the arguments of the $n$ points of a Circular $\beta$ Ensemble. 
\end{lemma}
In order to prove the first part of the theorem, it is enough (using rotational invariance of the Circular $\beta$ Ensemble) to bound the variance of the number of points in the arc
between $1$ and $e^{i x/n}$ by $C_{\beta} \log (2 + x)$ for all $x \in [0,2 \pi n)$. 

Since $\psi^{(n)}_{n-1}(0) = 0$, the lemma implies that the number of points $z \in [0,x]$ such that
$e^{i z /n}$ is in a given Circular Beta Ensemble
with $n$ points has the same law as the sum of
$\psi^{(n)}_{n-1}(x/n)/2 \pi$ and a random variable in
$[-1,1]$ which depends on the value of $\eta$.
Hence, it is sufficient to show the estimate:
$$\mathbb{E} \left[ \left(\psi^{(n)}_{n-1}(x/n) - x \right)^2 \right] = O ( \log (2+x)),$$
the implicit constant depending only on $\beta$. 

In order to prove this bound, we define, for
$\theta, a \in \mathbb{R}$, $(\psi^{(n)}_{k}(\theta,a))_{
0 \leq k \leq n-1}$ as the sequence satisfying the
same recursion as $(\psi^{(n)}_{k}(\theta))_{
0 \leq k \leq n-1}$, and such that
$\psi^{(n)}_0(\theta) = \theta + a$.
Since for any $\gamma$ in the unit disc,
$$\psi \mapsto \psi + 2 \Im \log \left(
\frac{1 - \gamma}{1 - \gamma e^{i \psi}} \right)$$
is increasing, we deduce that
$\psi^{(n)}_{k}(\theta,a)$ is increasing with respect to
$\theta$ and $a$. Moreover, the
 average of the function
$$z \mapsto \log  \left( \frac{1 - z}{1 - z e^{i \psi}} \right)$$
on any circle with center $0$ and radius strictly
smaller than $1$ is equal to zero, since
the function is holomorphic on the unit disc.
Hence,
$(\psi^{(n)}_{k}(\theta,a) - k \theta)_{0 \leq k \leq n-1}$
is a martingale for any $\theta$ and $a$. Moreover,
from the distribution of the variables
$(\gamma^{(n)}_k)_{n \geq 1, 0 \leq k \leq n-2}$, depending only on $n-k$, we deduce that for $0
\leq k_1  \leq k_2 \leq n-1$, and conditionally
on $(\psi^{(n)}_k(\theta,a))_{0 \leq k \leq k_1}$, $\psi^{(n)}_{k_1} (\theta, a) = \theta + b$, the
conditional law of $\psi^{(n)}_{k_2} (\theta,a)$
is equal to the law of $\psi^{(n-k_1)}_{k_2 - k_1}
(\theta, b)$. We then prove the following lemma:
\begin{lemma}
For $0 \leq k \leq n-1$, $0 \leq \theta \leq 1/n$,
$a \in \mathbb{R}$ and $b \geq 0$, one has
$$\mathbb{P} [\psi_k^{(n)}(\theta, a) \geq a + b]
\leq 12 \, e^{-b/12}.$$
\end{lemma}
\begin{proof}
One knows that $\psi_{k}^{(n)}(\theta,0) \geq \psi_k^{(n)}
(0,0) = 0$, and then by Markov's inequality and the
fact that $(\psi_{j}^{(n)}(\theta,0) - j \theta)_{0
\leq j \leq n-1}$ is a martingale,
$$\mathbb{P} [\psi_k^{(n)}(\theta,0) \geq 2\pi]
\leq \frac{1}{2 \pi}
\mathbb{E} [\psi_k^{(n)}(\theta,0)]
\leq \frac{k+1}{2 \pi n} \leq \frac{1}{2\pi}. $$
Let $\ell$ be a positive integer. If
$\psi_k^{(n)}(\theta,0) \geq 6 \pi \ell$,
let $T$ be the first index $k_0 \in \{0, \dots, k\}$ such that
$\psi_{k_0}^{(n)} (\theta, 0) \geq 6 \pi \ell$.
It is easy to check that the increments of
$(\psi_{j}^{(n)}(\theta,0))_{0
\leq j \leq n-1}$ are bounded by $2 \pi + 1/n \leq 4 \pi$, which implies that
$\psi_{T}^{(n)} (\theta, 0) \leq  \pi(6 \ell + 4)$.
Moreover, conditionally on
$T = k_0$ and $\psi_{T}^{(n)} (\theta, 0) =
b + \theta \leq \pi(6 \ell + 4)$, the conditional
law of
$\psi_k^{(n)}(\theta,0)$ is equal to
the law of $\psi_{k-k_0}^{(n-k_0)}(\theta, b)$.
Hence,
\begin{align*}\mathbb{P} [\psi_{k}^{(n)} (\theta, 0) \geq 6 \pi (\ell+1) | T = k_0, \psi_{T}^{(n)} (\theta, 0) =
b + \theta ]
& = \mathbb{P} [\psi_{k-k_0}^{(n-k_0)}(\theta, b)
\geq 6 \pi (\ell+1)]
\\ & \leq \mathbb{P} [\psi_{k-k_0}^{(n-k_0)}(\theta, \pi(6 \ell + 4))
\geq 6 \pi (\ell+1)]
\\ & = \mathbb{P} [\psi_{k-k_0}^{(n-k_0)}(\theta,0)
\geq 2 \pi] \leq \frac{1}{2 \pi}.
\end{align*}
In the last equality, we have used the general fact that:
$$\psi_{k}^{(n)}(\theta,a + 2\pi)
= \psi_{k}^{(n)} (\theta, a) + 2\pi$$
for all $n \geq 1$, $0 \leq k \leq n-1$, $a \in
\mathbb{R}$.
We then deduce:
$$\mathbb{P} [\psi_{k}^{(n)} (\theta, 0) \geq 6 \pi (\ell+1)] \leq \frac{1}{ 2\pi}
\mathbb{P} [\psi_{k}^{(n)} (\theta, 0) \geq 6 \pi \ell]$$
and by induction,
$$\mathbb{P} [\psi_{k}^{(n)} (\theta, 0) \geq 6 \pi \ell] \leq (2\pi)^{-\ell}.$$
Now, let $2 \pi \ell_1$ be the smallest multiple of
$2 \pi$ which is larger than or equal to $a$, and
let $6 \pi \ell_2$ be the largest multiple of
$6 \pi$ which is smaller than or equal to
$b - 2\pi$: in particular, $6 \pi \ell_2 \geq b - 8
\pi$. One deduces the lemma, as follows:
\begin{align*}
\mathbb{P} [\psi_{k}^{(n)} (\theta, a)
\geq a+b]
& \leq \mathbb{P} [\psi_{k}^{(n)} (\theta, 2 \pi \ell_1) \geq 2 \pi (\ell_1 -1) + b]
\\ & \leq  \mathbb{P} [\psi_{k}^{(n)} (\theta, 2 \pi \ell_1) \geq 2 \pi \ell_1 + 6 \pi \ell_2]
\\ &  = \mathbb{P} [\psi_{k}^{(n)} (\theta,0)
\geq 6 \pi \ell_2] \leq (2 \pi)^{-\ell_2}
\\ & \leq (2 \pi)^{- (b-8\pi)/6\pi}
\leq (2 \pi)^{4/3} e^{- b \log(2 \pi)/6 \pi}.
\end{align*}
\end{proof}
A consequence of the exponential tail of
the distribution of $\psi_k^{(n)}(\theta,a)$ is
a uniform bound on its variance:
\begin{lemma}
For $0 \leq k \leq n-1$, $0 \leq \theta \leq 1/n$,
$a \in \mathbb{R}$, the expectation
$\mathbb{E} [(\psi_k^{(n)}(\theta,a) - a)^2]$
is bounded by a universal constant.
\end{lemma}
\begin{proof}
Since $\theta > 0$, we deduce that $\psi_k^{(n)}(\theta,a)$ is larger than any multiple of $2\pi$
which is smaller than $a$, and
then larger than $a - 2 \pi$. Hence
\begin{align*}
\mathbb{E} [(\psi_k^{(n)}(\theta,a) - a)^2]
& \leq (2\pi)^2 \mathbb{P} [(\psi_k^{(n)}(\theta,a)
\leq a] +
\int_{0}^{\infty} 2 x \mathbb{P}[(\psi_k^{(n)}(\theta,a)  \geq a+x] \, dx
\\& \leq 4 \pi^2 + \int_0^{\infty}
24 x e^{-x/12} dx \leq 4 \pi^2 + 24 \cdot 12^2 \leq 3500.
\end{align*}
\end{proof}
Now, let us go back to
the proof of the theorem, and let us define $k$ as the infimum of
$n-1$ and $n- \lfloor n/(1+x) \rfloor$: in particular,
$0 \leq k \leq n-1$. Conditionally on
$\psi_k^{(n)} (x/n) = a + (x/n)$, the law of
$\psi_{n-1}^{(n)} (x/n)$ corresponds to the
law of $\psi_{n-k-1}^{(n-k)} (x/n, a)$. If
$n \geq 1  + x$, we have $n-k \leq n/(1+x)$, and
then $x/n \leq 1/(n-k)$. Hence, by the previous lemma, $\mathbb{E} [(\psi_{n-k-1}^{(n-k)} (x/n, a)
-a)^2]$, and then
$$\mathbb{E} [(\psi_{n-1}^{(n)} (x/n)  - a)^2 |
\psi_k^{(n)} (x/n) = a + (x/n)],$$
 are uniformly bounded. Hence, we have a uniform bound
for
$$\mathbb{E} [(\psi_{n-1}^{(n)} (x/n) -
\psi_k^{(n)} (x/n) + (x/n))^2 ],$$
and then for
$$\mathbb{E} [(\psi_{n-1}^{(n)} (x/n) -
\psi_k^{(n)} (x/n) )^2 ],$$
since $x/n \leq 1$ by assumption. The uniform bound
of the last quantity remains obviously true if
$n \leq 1+x$, since $k = n-1$ in this case.
Therefore, it is now sufficient to show the
bound $$\mathbb{E} [(\psi_k^{(n)} (x/n) - x)^2]
= O(\log (2+x)),$$
or equivalently
$$\mathbb{E} [(\psi_k^{(n)} (x/n) - (k+1)x/n)^2]
= O(\log (2+x)),$$
since $(n-k-1)x/n$ is uniformly bounded. Since
$(\psi_j^{(n)} (x/n) - (j+1)x/n)_{0 \leq j \leq n-1}$
is a martingale starting at zero, it is sufficient
to bound the expectation of the sum of the squared
increments:
$$\sum_{j=0}^{k-1} \mathbb{E} \left[
\left(\Im \log \left(
\frac{1 - \gamma_j^{(n)}}{1 - \gamma^{(n)}_j e^{i \psi}} \right)
\right)^2 \right]  = O (\log(2+x)). $$
Now, for $\gamma$ in the unit disc,
$$|\log (1 - \gamma) | \leq \sum_{\ell \geq 1}
|\gamma|^{\ell}/\ell = - \log (1 - |\gamma|),$$
and then
$$\mathbb{E} \left[
\left(\Im \log \left(
\frac{1 - \gamma_j^{(n)}}{1 - \gamma^{(n)}_j e^{i \psi}} \right)
\right)^2 \right]  \leq 4 \, \mathbb{E} [\log^2
(1 - |\gamma^{(n)}_j|)  ].$$
Since $|\gamma^{(n)}_j|^2$ is a Beta variable of
parameters $1$ and $\beta(n-j-1)/2$, we have
\begin{align*}
\mathbb{E} [\log^2
(1 - |\gamma^{(n)}_j|)  ] & = \beta(n-j-1)/2
\int_{0}^1  (1-x)^{(\beta(n-j-1)/2)-1} \log^2 (1 - \sqrt{x}) \, dx
\\ & =  \beta(n-j-1)/2
\int_{0}^1  y^{\beta(n-j-1)/2} \log^2 (1 - \sqrt{1-y}) \, \frac{dy}{y}
\\& =  \beta(n-j-1)/2
\int_0^{\infty}
e^{-u \beta(n-j-1)/2} \log^2 (1 - \sqrt{1-e^{-u}}) \, du. \end{align*}
Now, it is straightforward to check that
$$ \log^2 (1 - \sqrt{1-e^{-u}}) = O(u + u^2)$$
for $u \in \mathbb{R}_+^{*}$.
Hence,
\begin{align*}
\mathbb{E} [\log^2
(1 - |\gamma^{(n)}_j|)  ] &  = O\left(
 \int_0^{\infty}
u \beta(n-j-1) e^{-u \beta(n-j-1)/2} du  \right.
\\ & \left. +
 \int_0^{\infty}
u^2 \beta(n-j-1) e^{-u \beta(n-j-1)/2} du  \right)
\\ & = O \left( \frac{1}{\beta(n-j-1)} +
\frac{1}{\beta^2(n-j-1)^2} \right)
\\ & = O \left( \frac{1 + \beta}{\beta^2(n-j-1)}
\right).
\end{align*}
Adding this estimate for $j$ between $0$ and
$k-1$ gives a quadratric variation dominated,
with an implicit constant depending only on
$\beta$, by
$$\sum_{j = 0}^{k-1} \frac{1}{n-j-1}
= \sum_{j = n-k}^{n-1} \frac{1}{j}
 = \log[n/(n-k)] + O(1).$$
 If $n \geq 1 + x$, then
 $$n-k = \lfloor n/(1+x) \rfloor
 \geq \frac{n}{2(1+x)},$$
 hence
 $$\log [n/(n-k)] \leq \log (2+2x).$$
 If $n \leq 1+x$, then $n-k=1$ and
 $$\log [n/(n-k)] = \log n \leq \log (1+x). $$
 These estimates imply the first part of the theorem.

 Let us now show the second part, relative to the $\operatorname{Sine}_{\beta}$ point process. 
 We know that this point process is the scaling limit of the Circular $\beta$ Ensemble. 
 Hence,  by Skorokhod's representation theorem, one can construct point processes $L_n$, $L$, such that almost surely, the point measure corresponding
 to $L_n$ converges locally weakly to the measure corresponding to $L$, the distribution of $L$ and $L_n$ being given as follows: 
 \begin{itemize}
 \item The point process $L_n$ is obtained by taking all the determinations of the arguments of the $n$ points of a Circular $\beta$ Ensemble, multiplied by $n$. 
 \item The point process $L$ follows the $\operatorname{Sine}_{\beta}$ distribution. 
 \end{itemize}
 Let $x > 0$. Since $L$ almost surely does not contains the points $0$ and $x$, we have almost surely
 $$\operatorname{Card} (L_n \cap [0,x]) \longrightarrow  \operatorname{Card} (L \cap [0,x]),$$
 and then
 $$(\operatorname{Card} (L_n \cap [0,x]) - x/2 \pi)^2 \longrightarrow (\operatorname{Card} (L \cap [0,x]) - x/2 \pi)^2.$$
 By Fatou's lemma,
  $$\mathbb{E} [(\operatorname{Card} (L \cap [0,x]) - x/2 \pi)^2] \leq \underset{n \rightarrow \infty}{\lim \inf} \,
   \mathbb{E} [(\operatorname{Card} (L_n \cap [0,x])) - x/2 \pi)^2] \leq C_{\beta} \log(2 + x). $$
\end{proof}
A consequence of the previous result is the fact that
the points of a Circular Beta Ensemble are much more
regularly spaced than those of a Poisson point process. More precisely, we have the following result, used in our companion paper \cite{NV19}:
\begin{proposition}\label{estimatealmostsureCbeta}
With the notation above, 
for all $\alpha > 1/3$, there exists
 a random
variable $C > 0$,
stochastically dominated by a finite random variable depending only on
$\alpha$ and $\beta$, such that almost surely, for all $x \geq 0$,
$$|\operatorname{Card} (L_n \cap [0,x])
- x/2 \pi| \leq C (1+x)^{\alpha},$$
and
$$|\operatorname{Card} (L_n\cap [-x,0])
- x/2 \pi| \leq C (1+x)^{\alpha},$$
and the similar bounds with $L_n$ replaced by $L$. 
Moreover, we can take $C$ in such a way that the following holds for all $u > 0$: 
$$\mathbb{P} [ C \geq u ] \leq K_{\alpha, \beta} u^{-2} $$
for some $K_{\alpha, \beta} > 0$ depending only on $\alpha$ and $\beta$. 
\end{proposition}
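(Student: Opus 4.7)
The plan is to deduce the proposition from the variance bound in Theorem \ref{estimatequadraticCbeta} by applying Chebyshev's inequality on a suitably sparse grid and then interpolating between grid points via the monotonicity of $x \mapsto \operatorname{Card}(L_n \cap [0,x])$. I describe the argument for $L_n$ on $[0,x]$; the other cases are identical after reflection, and the case of $L$ uses the second half of Theorem \ref{estimatequadraticCbeta} in place of the first.

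Fix $\alpha > 1/3$ and set $x_i := i^{1/(1-\alpha)}$ for $i \geq 0$, so that $x_{i+1} - x_i \asymp (1+x_i)^\alpha$. Since $L_n$ is translation-invariant with intensity $1/(2\pi)$, one has $\mathbb{E}[\operatorname{Card}(L_n \cap [0,x])] = x/(2\pi)$ exactly, and Theorem \ref{estimatequadraticCbeta} bounds the variance of this count by $C_\beta \log(2+x)$. Chebyshev's inequality therefore gives, for every $\lambda \geq 1$,
$$\mathbb{P}\!\left( |\operatorname{Card}(L_n \cap [0,x_i]) - x_i/(2\pi)| \geq \lambda (1+x_i)^\alpha \right) \leq \frac{C_\beta \log(2+x_i)}{\lambda^2 (1+x_i)^{2\alpha}}.$$
Since $(1+x_i)^{2\alpha} \asymp i^{2\alpha/(1-\alpha)}$, the series $\sum_{i \geq 1} \log(2+x_i)/(1+x_i)^{2\alpha}$ converges precisely when $2\alpha/(1-\alpha) > 1$, that is, when $\alpha > 1/3$: this is the origin of the threshold in the statement.

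Setting
$$C_n^+ := \sup_{i \geq 0} \frac{|\operatorname{Card}(L_n \cap [0,x_i]) - x_i/(2\pi)|}{(1+x_i)^\alpha},$$
a union bound then yields $\mathbb{P}(C_n^+ \geq \lambda) \leq K_{\alpha,\beta}/\lambda^2$ uniformly in $n$, so $C_n^+$ is stochastically dominated by a random variable depending only on $\alpha$ and $\beta$. To extend the control from the grid to all $x \geq 0$, write $f(y) := \operatorname{Card}(L_n \cap [0,y])$; for $x \in [x_i, x_{i+1}]$, the monotonicity of $f$ gives
$$|f(x) - x/(2\pi)| \leq \max\bigl( |f(x_i) - x_i/(2\pi)|,\; |f(x_{i+1}) - x_{i+1}/(2\pi)| \bigr) + (x_{i+1} - x_i)/(2\pi),$$
and by construction of the grid the correction term is $O((1+x)^\alpha)$, which can be absorbed into the overall constant.

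The bound on $[-x,0]$ follows by reflecting the argument, and the bound for the $\operatorname{Sine}_\beta$ process $L$ is obtained by repeating the argument verbatim using the second variance estimate of Theorem \ref{estimatequadraticCbeta}. The only delicate ingredient is the choice of grid spacing: any coarser grid leaves the interpolation correction larger than $(1+x)^\alpha$, while any finer grid makes the Chebyshev sum divergent, and these two competing requirements are simultaneously satisfiable exactly when $\alpha > 1/3$.
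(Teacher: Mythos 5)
Your proposal is correct and follows essentially the same strategy as the paper: Chebyshev's inequality applied on a sparse polynomial grid (the paper uses $p^{3/2}$, you use the $\alpha$-dependent grid $i^{1/(1-\alpha)}$, a cosmetic difference), a union bound whose convergence is exactly the condition $\alpha>1/3$, and interpolation between grid points via monotonicity of the counting function. The only omitted triviality is that one may assume $\alpha<1$ (so the grid is well defined), since the claim for smaller $\alpha$ implies it for larger $\alpha$.
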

\begin{remark}
The periodicity of $L_n$ implies that
$|\operatorname{Card} (L_n \cap [0,x])
- x/2 \pi|$ is  almost surely bounded when $x$ varies.
Hence, the result above becomes trivial if one allows
$C$ to depend on $n$.
Moreover, we expect that it remains true for any $\alpha > 0$, and not only for $\alpha > 1/3$.
\end{remark}
\begin{proof}
It is sufficient to check the first estimate. 
We prove the result for $L$: the proof of $L_n$ is exactly the same since we have the same estimates for the variance of the number of points in an interval. 
For any $p \geq 1$ and $A > 0$, one gets from the previous
theorem: 
\begin{align*}
\mathbb{P} [|\operatorname{Card} (L \cap [0,p^{3/2}]
- p^{3/2}/ 2\pi | \geq A p^{3\alpha/2} ]
&\leq A^{-2} p^{-3\alpha} \mathbb{E}
[(\operatorname{Card} (L \cap [0,p^{3/2}]
- p^{3/2}/ 2\pi )^2]
\\ &  = O(A^{-2}p^{-3\alpha} \log (1+p)),
\end{align*}
with an implicit constant depending only on $\alpha$  and $\beta$. By summing in $p$, one deduces that with probability
$1 - O(A^{-2})$,
$$|\operatorname{Card} (L \cap [0,p^{3/2}]
- p^{3/2}/ 2\pi | \leq A p^{3\alpha/2}$$
for all $p \geq 1$. In other words, the tail of
the infimum $B$ of the values $A$ satisfying the
previous bound is smaller than $B^{-2}$ times a
quantity  depending only on $\alpha$ and $\beta$. 
For any $x \geq 0$, let $p$ be the integer part
of $1 + x^{2/3}$: one has $(p-1)^{3/2} \leq x \leq
p^{3/2}$. Hence,
$$\operatorname{Card} (L \cap [0,x])  \leq
\operatorname{Card} (L \cap [0,p^{3/2}])
\leq p^{3/2}/2 \pi + Bp^{3\alpha/2}.$$
Now, it is immediate to check that
$x = p^{3/2} + O((1+x)^{1/3})$ (with a universal implicit constant), and that
$p^{3/2} = O(1+x)$. Hence
$$\operatorname{Card} (L \cap [0,x])
\leq x/2\pi + O((1+x)^{1/3}) + O(B(1+x)^{\alpha})
= x/2\pi +  O((1+B)(1+x)^{\alpha}),$$
with implicit constants depending only on $\alpha$.
Similarly,
\begin{align*}
\operatorname{Card} (L \cap [0,x])  \geq
\operatorname{Card} (L \cap [0,(p-1)^{3/2}])
& = (p-1)^{3/2}/2 \pi + O(Bp^{3\alpha/2})
\\& =  x/2\pi + O((1+x)^{1/3}) + O(B(1+x)^{\alpha})
\\& = x/2\pi +  O((1+B)(1+x)^{\alpha}).
\end{align*}
Hence, we are done, by taking $C$ equal to $1+B$ times a quantity depending only on $\alpha$: $C$ is
stochastically dominated by a variable depending
only on $\alpha$ and $\beta$, and the estimate on its  tail is deduced from the estimate on the tail of $B$ obtained above. 
\end{proof}

\section{Estimates for the Gaussian Beta Ensembles}

It is known that after suitable scaling, the empirical distribution of the points of the Gaussian Beta Ensemble tends to the semi-circle distribution. 
The  result below  gives a $L^2$ bound on the fluctuations of the number of points in an interval, with respect to this limiting distribution: 
\begin{theorem} \label{boundvariance2017}
For $-\infty \leq \Lambda_1 < \Lambda_2 \leq \infty$, let $N_n(\Lambda_1, \Lambda_2)$
be the number of points, between $\Lambda_1$ and $\Lambda_2$, of a Gaussian $\beta$ Ensemble with $n$
points, and let $N_{sc}(\Lambda_1, \Lambda_2)$
be $n$ times the measure of $(\Lambda_1, \Lambda_2)$
under the semi-circle distribution on the interval
$[- 2\sqrt{n}, 2 \sqrt{n}]$:
$$N_{sc}(\Lambda_1, \Lambda_2) :=
\frac{n}{2\pi} \int_{\Lambda_1/\sqrt{n}}^{\Lambda_2/\sqrt{n}}
\sqrt{(4 - x^2)_+}  \,  dx.
$$
Then,
$$\mathbb{E} [(N_n(\Lambda_1, \Lambda_2) - N_{sc}(\Lambda_1, \Lambda_2))^2] = O( \log (2 + (\sqrt{n}(\Lambda_2 - \Lambda_1) \wedge n))),$$
 where, here and in the sequel of the article, we use the notation: 
$$a \wedge b := \min(a,b), \quad a \vee b  := \max(a,b).$$
\end{theorem}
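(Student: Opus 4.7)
The plan is to imitate the martingale strategy of Theorem~\ref{estimatequadraticCbeta}, using the tridiagonal Jacobi matrix model of Dumitriu--Edelman together with the discrete Brownian carousel of Valk\'o--Vir\'ag. The Gaussian $\beta$ Ensemble of size $n$ is realized as the spectrum of a random symmetric tridiagonal matrix with independent Gaussian diagonal entries and $\chi$-distributed off-diagonal entries, and to each real parameter $\Lambda$ one associates a monotone sequence of Pr\"ufer--type phases $\psi^{(n)}_k(\Lambda)$, $0 \leq k \leq n$, built from the transfer matrices of the eigenvalue equation. Sturm oscillation theory yields that $\psi^{(n)}_n(\Lambda)/(2\pi)$ equals the number of eigenvalues at most $\Lambda$ up to $O(1)$, so that
$$N(\Lambda_1,\Lambda_2) = \frac{1}{2\pi}\bigl(\psi^{(n)}_n(\Lambda_2) - \psi^{(n)}_n(\Lambda_1)\bigr) + O(1).$$

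The next step is to decompose $\psi^{(n)}_k(\Lambda) = \Phi_k(\Lambda) + M^{(n)}_k(\Lambda)$, where $\Phi_k(\Lambda)$ is the deterministic drift obtained from one-step conditional expectations and $M^{(n)}_k(\Lambda)$ is a martingale in $k$. Identifying $\Phi_n(\Lambda)$ with $2\pi N_{sc}(-\infty,\Lambda)$ up to $O(1)$ is where the semicircle law enters: the per-step drift is the local angular speed of the semicircle parametrization (after an $\arcsin$-type change of variable), and its sum from $k=0$ to $k=n$ recovers the bulk prediction. With this reduction, the variance in the theorem becomes, up to a bounded error, the expected quadratic variation
$$\sum_{k=0}^{n-1} \mathbb{E}\bigl[(\Delta M^{(n)}_k(\Lambda_2) - \Delta M^{(n)}_k(\Lambda_1))^2\bigr].$$
Each single-step increment admits a bound analogous to the $|\log(1-|\gamma_j|)|$ estimate of the circular case, governed by the relative phase gap $\delta_k := \psi^{(n)}_k(\Lambda_2) - \psi^{(n)}_k(\Lambda_1) \geq 0$.

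The main obstacle is controlling the accumulation of $\delta_k$ and then summing the per-step variances uniformly in $\Lambda_1,\Lambda_2$. Before $\delta_k$ first reaches $2\pi$, the single-step variance can be bounded in terms of the local drift, and a telescoping argument produces a ballistic contribution matching the effective length $\sqrt{n}(\Lambda_2-\Lambda_1)$. Once $\delta_k$ exceeds $2\pi$, I would use a monotonicity/Markov argument in the spirit of the lemmas above to obtain exponential tails on $\delta_k$ uniform in $k$, from which the per-step variance is of order $1/(n-k+1)$; summing these contributions produces the logarithmic factor. The genuinely new difficulty compared with the circular case is the inhomogeneity of the angular speed in $k$ and its collapse near the spectral edges $\pm 2\sqrt{n}$: a separate stopping-time argument should show that the relative phase cannot grow efficiently in the edge region, so that the contribution there saturates at $\log n$, producing the wedge with $n$ in the final bound $\log(2+(\sqrt{n}(\Lambda_2-\Lambda_1) \wedge n))$.
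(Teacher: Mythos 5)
Your high-level architecture (tridiagonal model, Pr\"ufer-type phases, martingale decomposition of the phase, quadratic variation summing to a logarithm) is indeed the skeleton of the paper's argument, but two of the steps you treat as routine are in fact where essentially all of the difficulty lies, and as sketched they do not go through. First, you propose to run a single forward phase $\psi^{(n)}_k(\Lambda)$ all the way to $k=n$ and read off the eigenvalue count by Sturm oscillation. This fails because the transfer recursion is a perturbed rotation only up to the turning point $n_0(\Lambda)=n-\Lambda^2/4-1/2$; beyond it the evolution is hyperbolic, the one-step drift and variance estimates of order $1/(n_0-k)$ degenerate, and the martingale/quadratic-variation bookkeeping breaks down. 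The paper instead stops the forward phase at $\ell\approx n_0-\mu^{2/3}$ and matches it with a \emph{backward} phase $\tphh_{\ell,\Lambda}$ run from the other boundary condition; proving that $\tphh_{\ell,\mu}+2\pi(n-\ell)$ is bounded in $L^2$ amounts to an edge rigidity statement (at most $O(1)$ eigenvalues below $-2\sqrt{n}+An^{-1/6}$ in $L^2$), which the paper establishes by a quadratic-form comparison of $M$ with the Hermite Jacobi matrix $H$, requiring the operator inequalities \eqref{boundH1}--\eqref{boundH2} with random constants bounded in $L^3$ via Doob and Rosenthal. Your ``separate stopping-time argument near the edge'' is a placeholder for this entire second half of the proof, and a phase-based argument alone will not produce it.

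Second, your drift term $\Phi_k(\Lambda)$, defined as the sum of one-step conditional expectations, is \emph{not} deterministic: after removing the deterministic hyperbolic rotation (which is what actually produces the semicircle density, via $\sum_j\operatorname{Arg}(\rho_j)$, not via the conditional expectations), the conditional drift of the renormalized phase is $b_0(t)/n_0+osc_{1,j}/n_0+O(k^{-3/2})$, where $osc_{1,j}$ involves $\Re(e_{1,j}\eta_j)$ and $\Re(e_{2,j}\eta_j^2)$ with $e_{d,j}$ depending on the current (random) phase. Showing that these oscillatory sums are $O(n_0)$ in $L^2$ --- so that after division by $n_0$ they contribute $O(1)$ --- requires Abel summation against the rapidly rotating $\eta_j$, fourth-moment bounds on the increments, and separate treatment of the regime $n_0-j\lesssim\mu^2$; without this the identification of the mean with $N_{sc}$ up to $O(1)$ is unjustified, and the variance bound collapses. (Relatedly, the martingale property itself needs the row-independent non-symmetric conjugate $\widetilde M$ of Valk\'o--Vir\'ag rather than the raw symmetric model.) Your treatment of the relative phase $\delta_k$ is closer in spirit to what the paper does for \eqref{boundphi2} --- a stopping level $\ell^*$ with $\sum 1/k=O(\log(2+|\lambda|))$ plus an $O(1)$ tail controlled by the monotone winding of the relative phase and a geometric stopping-time argument --- but as written it is a heuristic, not a proof.
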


The theorem is much more difficult to prove than the previous estimates on the Circular $\beta$ Ensemble. 

In Trotter \cite{Tro84} and Dumitriu and Edelman \cite{DE02} are
introduced some ensembles of tridiagonal real symmetric matrices,
for which  the distribution of the eigenvalues
corresponds to the Gaussian $\beta$ Ensemble.

The tridiagonal real symmetric random matrices $(M_{p,q})_{1 \leq p,
q \leq n}$ can be described as follows:
the diagonal entries  $(M_{p,p})_{1 \leq p \leq n}$
are centered Gaussian variables of variance $2/\beta$, the
entries just above the diagonal $(M_{p,p+1})_{1
\leq p \leq n-1}$ are $\chi_{\beta(n-p)}/ \sqrt{\beta}$, $\chi_{m}$
being a chi-distributed random variable with $m$ degrees of freedom, all these entries being independent.

In \cite{VV}, the authors prove that
after a suitable rescaling, the limiting distribution
of the eigenvalues of this matrix ensemble tends
to the $\mathrm{Sine}_{\beta}$ point process. 

The general method used in the article consists
 of the following: let $\Lambda$ be an eigenvalue of
 a tridiagonal matrix whose distribution is
 given above,
 and $(u_{\ell})_{1 \leq \ell \leq n}$ an
 eigenvector corresponding to this eigenvalue.
 Solving the eigenvalue equation gives a three term
 recursion for the sequence $(u_{\ell})_{1 \leq \ell \leq n}$, which in turn implies that the ratio
 $r_{\ell} = u_{\ell + 1}/ u_{\ell}$ (considered
 as an element of the projective real line) satisfies
 a recursion of the form $r_{\ell + 1}
  = r_{\ell} .
  \mathbf{R}_{\ell,
   \Lambda}$, where
  $r_{\ell}.\mathbf{R}_{\ell, \Lambda}$ denotes
  the image of $r_{\ell}$ by a transformation
   $\mathbf{R}_{\ell, \Lambda}$ of the projective real line, given by $r.\mathbf{R}_{\ell, \Lambda} =   b - a/r$, $a, b \in \mathbb{R}$ depending on $\Lambda$ and on the entries of the matrix. This recursion can be followed for any
   $\Lambda \in \mathbb{R}$: however, the boundary
   conditions are consistent only if $\Lambda$ is
   an eigenvalue of the matrix. Indeed, although
   $r_{\ell}$ is originally defined only for
   $\ell \in \{1, \dots, n-1\}$, one can extend
   this notation by considering that the entries
   $u_0$ and $u_n$ are equal to zero: this gives
   $r_0 = \infty$ and $r_n = 0$. On the other hand,
   one can naturally define $\mathbf{R}_{\ell, \Lambda}$ also for $\ell = 0$ and $\ell = n-1$ and then follow the recursion from $r_0$ to $r_n$:
   $\Lambda$ is then an eigenvalue if and only if
   this recursion is consistent with the boundary
   conditions $r_0 = \infty, r_n = 0$, i.e.:
   \begin{equation} \label{eigeq}
\infty. \mathbf{R}_{0, \Lambda}.
   \mathbf{R}_{1, \Lambda} \dots \mathbf{R}_{n-1, \Lambda} = 0,
\end{equation} 
this notation meaning that the image of $\infty$ by successive applications of the transformations $ \mathbf{R}_{0, \Lambda},  \dots,  \mathbf{R}_{n-1, \Lambda}$, applied in this order, is equal to $0$. 
   In \cite{VV}, Subsection 4.2, the tridiagonal model
   described above is slightly modified by a conjugation with a suitably chosen diagonal matrix,
   which does not change the eigenvalues.
  The new model consists of a (non-Hermitian) tridiagonal
  matrix $(\widetilde{M}_{p,q})_{1 \leq p, q \leq n}$,
  for which:
  \begin{itemize}
  \item $\widetilde M_{p,p} = X_{p-1}$ for all $p \in \{1, \dots,
  n\}$.
  \item $\widetilde M_{p+1, p} = s_p$ for all    $p \in \{1, \dots, n-1\}$.
  \item $\widetilde M_{p, p+1} = s_{p-1} + Y_{p-1}$ for all  $p \in \{1, \dots, n-1\}$.
  \end{itemize}
  Here, $s_p = \sqrt{n - p - 1/2}$ for all
  $p \in \{0, \dots, n-1\}$, and $(X_p)_{0 \leq p \leq n-1}$, $(Y_p)_{0 \leq p \leq n-2}$ are independent random variables whose law is given by 
$$ X_p = \mathcal{N} (0, 2 / \beta),  \quad Y_p = \frac{\chi^2_{(n-p-1)\beta}}{\beta s_{p+1}} - s_p,$$
which implies that 
  $$\mathbb{E} [X_p] = O((n-p)^{-3/2}), \;
  \mathbb{E} [X^2_p] =  \frac{2}{\beta} +
  O((n-p)^{-1}), \;  \mathbb{E} [X^4_p] = O(1),$$
  and the same estimates  for the moments of 
  $(Y_p)_{0 \leq p \leq n-2}$.

 The interest of this change of matrix model is
 the independence of the different rows, which implies the independence of the random maps $(\mathbf{R}_{\ell,
 \Lambda})_{0 \leq \ell \leq n-1}$.

Writing in detail the eigenvalue equation corresponding to the row $\ell +1$ of the matrix $\widetilde{M}$, we get the following formula (see \cite{VV}, equation (44)): 
 $$\mathbf{R}_{\ell,
 \Lambda} = \mathbf{Q} (\pi) \mathbf{A} (1,
 \Lambda/s_{\ell}) \mathbf{W}_{\ell},$$
 where for $\theta \in \mathbb{R}$,
 $x \in \mathbb{P}^1 (\mathbb{R})$,
 $$x . \mathbf{Q} (\theta) = \frac{x \cos(\theta/2)
 + \sin(\theta/2) }{-x \sin(\theta/2)
 + \cos(\theta/2)},$$
 in particular, $x . \mathbf{Q} (\pi)
 = -1/x$, and where for $a \in \mathbb{R}_+^*$, $b \in \mathbb{R}$,
 $$x. \mathbf{A} (a,b) = a(x+b)$$
 and $$ \mathbf{W}_{\ell} =
  \mathbf{A}((1+ Y_{\ell}/s_{\ell})^{-1}, - X_{\ell}
  / s_{\ell}).$$
  Note that, similarly as in \eqref{eigeq}, the composition is performed from the left
  to the right (one applies $\mathbf{Q} (\pi)$,
  then $\mathbf{A} (1,
 \Lambda/s_{\ell})$ and then $\mathbf{W}_{\ell}$), and that all the randomness is contained in the
  factor $\mathbf{W}_{\ell}$.

  On the other hand, the projective line $\mathbb{P}^1(\mathbb{R})$ can be identified with the unit circle
  $\mathbb{U}$, via the
  {\it Cayley transform} $\mathbf{U}$, given
  by:
  $$x. \mathbf{U}  =  \frac{i-x}{i+x}.$$
  Hence, via a conjugation by the Cayley transform, for $\theta \in \mathbb{R}$, $a \in
  \mathbb{R}_+^*$ and $b \in \mathbb{R}$,
  $\mathbf{Q} (\theta)$ and $\mathbf{A} (a,b)$
   can be identified
  with bijections of the unit circle instead of the
  projective line: moreover, one checks that
  $\mathbf{Q} (\theta)$ corresponds to a
  rotation of angle $\theta$ (see \cite{VV}, Subsection 4.1 for more detail). 
  Similarly, $\mathbf{R}_{\ell, \Lambda}$ can be seen as a transformation of 
unit circle, and the image of $z \in
  \mathbb{U}$ will be denoted:
  $$z \lcirc \mathbf{R}_{\ell, \Lambda}
  = z \lcirc \mathbf{Q} (\pi) \mathbf{A} (1,
 \Lambda/s_{\ell}) \mathbf{W}_{\ell}.$$
More generally and more rigorously, for a transformation $\mathbf{R}$ of the projective real line, and for $z \in \mathbb{U}$, we denote by
$z \lcirc \mathbf{R} $  the image of $z$ by the composition of the inverse Cayley transform $\mathbf{U}^{-1} : \mathbb{U} \rightarrow \mathbb{P}^1(\mathbb{R})$, 
the map $\mathbf{R}  : \mathbb{P}^1(\mathbb{R}) \rightarrow  \mathbb{P}^1(\mathbb{R})$, and the Cayley transform $\mathbf{U} : \mathbb{P}^1(\mathbb{R}) \rightarrow \mathbb{U} $. 

  In this setting, $\Lambda$ is an eigenvalue of $M$ or $\widetilde{M}$ if and only if
    $$ (-1) \lcirc \mathbf{R}_{0, \Lambda}
   \mathbf{R}_{1, \Lambda} \dots \mathbf{R}_{n-1, \Lambda} = 1.$$

   Now, the unit circle can be lifted to the real line,
 by taking the argument: at each point
 $z \in \mathbb{U}$, one associates all the values $x\in \mathbb{R}$ such that $e^{i x} = z$, which gives
 a $2 \pi$-periodic subset of $\mathbb{R}$.
 The applications $\mathbf{Q} (\theta)$ and
 $\mathbf{A} (a,b)$ can then be defined as actions on the
 real line $\mathbb{R}$, as follows: for all $\varphi
 \in \mathbb{R}$, the image of $\varphi$ by $\mathbf{Q} (\theta)$
 is $\varphi \lstar \mathbf{Q} (\theta) = \varphi + \theta$,
 and $\mathbf{A} (a,b)$ is the unique continuous and increasing
 application on $\mathbb{R}$ such that
 $$e^{i \varphi \lstar \mathbf{A} (a,b)}
 = e^{i \varphi} \lcirc \mathbf{A} (a,b)$$
 for all $\varphi \in \mathbb{R}$, and $\pi \lstar
 \mathbf{A} (a,b) = \pi$: note that this last equality
 is possible since $\infty.  \mathbf{A} (a,b) = \infty$, which
 implies that $(-1) \lcirc \mathbf{A} (a,b) = -1$, i.e.
 $e^{i \pi} \lcirc \mathbf{A} (a,b) = e^{i \pi}$.
More generally, for all $x \in \mathbb{R}$, and for any  $\mathbf{T}$  explicitly written as 
$$\mathbf{T} = \mathbf{T}_1 \mathbf{T}_2 \dots \mathbf{T}_m,$$
 where for all $j \in \{1,\dots, m\}$,  $\mathbf{T}_j$ is equal to $\mathbf{A}(a,b)$ (for some $a \in \mathbb{R}_+^*$, $b \in \mathbb{R}$) or $\mathbf{Q}(\theta)$ (for some $\theta \in \mathbb{R}$), we denote by $x \lstar \mathbf{T}$ the image of $x$ by the composition of the maps $x \mapsto x \lstar \mathbf{T}_1$, ..., $x \mapsto x \lstar \mathbf{T}_m$, in this order. 
The examples of $\mathbf{Q}(\theta)$ and $\mathbf{Q}(\theta + 2 \pi)$ show that  the same map from $\mathbb{P}^1 (\mathbb{R})$  to $\mathbb{P}^1 (\mathbb{R})$ may correspond to different maps from $\mathbb{R}$ and $\mathbb{R}$ after applying 
the Cayley transform and taking the argument: however, all these maps differ by a translation of a multiple of $2 \pi$. 
Since $\mathbf{W}_{\ell}$ is a (random) map of the form $\mathbf{A} (a,b)$, we can define: 
 $$x \lstar \mathbf{R}_{\ell, \Lambda}
 = x \lstar \mathbf{Q} (\pi) \mathbf{A} (1,
 \Lambda/s_{\ell}) \mathbf{W}_{\ell}$$
 for all $x  \in \mathbb{R}$. The following equality holds: 
\begin{equation}(2\pi + x) \lstar \mathbf{R}_{\ell, \Lambda}
 = 2\pi + (x \lstar \mathbf{R}_{\ell, \Lambda} ).
 \label{quasiperiodicity}
 \end{equation}
 and  $\Lambda$ is an eigenvalue of $\widetilde{M}$ if and only
 if
 $$ \pi \lstar \mathbf{R}_{0, \Lambda} \mathbf{R}_{1, \Lambda}
 \dots   \mathbf{R}_{n-1, \Lambda} \in 2 \pi \mathbb{Z}.$$
 Using \eqref{quasiperiodicity}, we deduce that
 for any $\ell \in \{0, \dots, n\}$, this condition is
 equivalent to
 $$\hat{\varphi}_{\ell, \Lambda} - \tphh_{\ell, \Lambda}
 \in 2 \pi  \mathbb{Z}$$
 where
 $$ \hat{\varphi}_{\ell, \Lambda} =   \pi \lstar \mathbf{R}_{0, \Lambda} \mathbf{R}_{1, \Lambda}
 \dots   \mathbf{R}_{\ell-1, \Lambda},$$
 and 
 $$\tphh_{\ell, \Lambda} = 0 \lstar \mathbf{R}_{n-1, \Lambda}^{-1} \mathbf{R}_{n-2, \Lambda}^{-1}
 \dots   \mathbf{R}_{\ell, \Lambda}^{-1}$$
is the image of $0$ by  successive application
of the inverses of the maps $x \mapsto x \lstar \mathbf{R}_{n-1, \Lambda}$, $x \mapsto x \lstar \mathbf{R}_{n-2, \Lambda}$, ..., $x \mapsto x \lstar \mathbf{R}_{\ell, \Lambda}$.
 Since $x \mapsto x \lstar \mathbf{R}_{\ell, \Lambda}$ is an analytic and
 increasing function, and is also strictly increasing with respect to $\Lambda$,
 one deduces that for $1 \leq \ell \leq n-1$, $\hat{\varphi}_{\ell, \Lambda}$ is
 analytic and strictly increasing with respect to
 $\Lambda$, and that
 $\tphh_{\ell, \Lambda}$ is analytic and strictly decreasing in $\Lambda$.

 Moreover, one checks that for $\Lambda$ going to $-\infty$,
$\hat{\varphi}_{\ell, \Lambda}$ tends to $\pi$ and
$\tphh_{\ell, \Lambda}$ tends to $0$. Hence, one deduces the
following result:
\begin{proposition} \label{numbereigenvalues}
The number of eigenvalues of $M$ in the interval
$(-\infty, \Lambda]$ is equal to the integer part
of $\frac{1}{2\pi} (\hat{\varphi}_{\ell, \Lambda}
- \tphh_{\ell, \Lambda})$.
\end{proposition}
For any $\mu \geq 0$, following \cite{VV},
we introduce the following quantities:
$$ n_0 (\mu) = \max \left(n - \frac{\mu^2}{4} - \frac{1}{2}, 1 \right),$$
 and for all $\ell$ such that $0 \leq \ell < n_0(\mu)$,
 $$\rho_{\ell} (\mu) :=
 \sqrt{\frac{\mu^2/4}{\mu^2/4 + n_0(\mu) - \ell}}
 + i \, \sqrt{\frac{n_0(\mu) - \ell}{\mu^2/4 + n_0(\mu) - \ell}}.$$
 When there is no ambiguity, we will write respectively
 $n_0$ and $\rho_{\ell}$: notice that $|\rho_{\ell}| = 1$.
 Then, we introduce a modification
  of the phase $\hat{\varphi}_{\ell, \Lambda}$, denoted
  $\varphi_{\ell, \Lambda, \mu}$, in order to remove the
  fast variations of $\hat{\varphi}_{\ell, \Lambda}$, which
 come from the deterministic  part
   $\mathbf{Q} (\pi) \mathbf{A} (1,
 \Lambda/s_{\ell})$ of $\mathbf{R}_{\ell, \Lambda}$. The precise definition of $\varphi_{\ell, \Lambda, \mu}$ is the
 following:
 $$\varphi_{\ell, \Lambda, \mu} =
  \hat{\varphi}_{\ell, \Lambda} \lstar
  \mathbf{A} (\Im(\rho_{\ell})^{-1}, - \Re (\rho_{\ell}))
   - 2 \sum_{j = 0}^{\ell - 1} ( \pi - \operatorname{Arg} (\rho_j)).$$
The following
   result corresponds to Proposition 18 in \cite{VV}. For sake of completeness,  we prove this result here. Our proof also explains the introduction of the term $\pi$ in the sum just above, which does not appear explicitly in \cite{VV}, and which is related to the choice of the determination of the argument of some complex numbers of modulus $1$. 

  \begin{proposition} \label{ash} 
  For $\Lambda \in \mathbb{R}$ and $\mu \geq 0$, one has $\varphi_{0, \Lambda, \mu} = \pi$ and
   for $0 \leq \ell < n_0 -1$:
  $$\Delta \varphi_{\ell, \Lambda, \mu}
  := \varphi_{\ell+1 , \Lambda, \mu} -\varphi_{\ell, \Lambda, \mu} = \operatorname{ash} (\mathbf{S}_{\ell, \Lambda, \mu},
  -1, e^{i \varphi_{\ell, \Lambda, \mu}} \overline{ \eta_{\ell}}),$$
  where
$$  \mathbf{S}_{\ell, \Lambda, \mu}
= \mathbf{A}^{-1} (\Im(\rho_{\ell})^{-1}, - \Re (\rho_{\ell}))
\mathbf{A} \left(1, \frac{\Lambda - \mu}{s_{\ell}}\right) \mathbf{W}_{\ell} \mathbf{A} (\Im(\rho_{\ell+1})^{-1}, - \Re (\rho_{\ell+1})),$$
  $$ \operatorname{ash} (\mathbf{S}_{\ell, \Lambda, \mu},
  e^{ix}, e^{iy}) := (y \lstar \mathbf{S}_{\ell, \Lambda, \mu} -x \lstar \mathbf{S}_{\ell, \Lambda, \mu})
  - (y-x)$$
  and
  $$\eta_{\ell} = \prod_{j=0}^{\ell} \rho_j^2.$$
 Note that the definition of the angular shift $\operatorname{ash} $ is meaningful since the right-hand side
  does not depend on the determination of the arguments of $e^{ix}$ and $e^{iy}$.
  \end{proposition}
\begin{proof}
We have $$\varphi_{0, \Lambda, \mu} =  \hat{\varphi}_{0, \Lambda} \lstar
  \mathbf{A} (\Im(\rho_{0})^{-1}, - \Re (\rho_{0})) = \pi \lstar  \mathbf{A} (\Im(\rho_{0})^{-1}, - \Re (\rho_{0})) = \pi.$$
Moreover, for  $0 \leq \ell < n_0 -1$, 
\begin{align*}
& \varphi_{\ell+1 , \Lambda, \mu}   =   \hat{\varphi}_{\ell+1, \Lambda} \lstar
  \mathbf{A} (\Im(\rho_{\ell+1})^{-1}, - \Re (\rho_{\ell+1}))
   - 2 \sum_{j = 0}^{\ell} ( \pi - \operatorname{Arg} (\rho_j))
\\ & =  \hat{\varphi}_{\ell, \Lambda} \lstar \mathbf{R}_{\ell, \Lambda}  \mathbf{A} (\Im(\rho_{\ell+1})^{-1}, - \Re (\rho_{\ell+1}))
   - 2 \sum_{j = 0}^{\ell} ( \pi - \operatorname{Arg} (\rho_j))
\\ & =  \hat{\varphi}_{\ell, \Lambda} \lstar \mathbf{Q}(\pi) \mathbf{A} (1, \mu/s_{\ell})  \mathbf{A} (1, (\Lambda - \mu)/s_{\ell})  \mathbf{W}_{\ell}  \mathbf{A} (\Im(\rho_{\ell+1})^{-1}, - \Re (\rho_{\ell+1}))
   - 2 \sum_{j = 0}^{\ell} ( \pi - \operatorname{Arg} (\rho_j))
\\ & = \left(  \varphi_{\ell, \Lambda, \mu} +  2 \sum_{j = 0}^{\ell-1} ( \pi - \operatorname{Arg} (\rho_j)) \right) \lstar   \mathbf{A}^{-1} (\Im(\rho_{\ell})^{-1}, - \Re (\rho_{\ell}))
\mathbf{Q}(\pi) \mathbf{A} (1, \mu/s_{\ell}) \dots
\\ & \dots  \mathbf{A} (1, (\Lambda - \mu)/s_{\ell})  \mathbf{W}_{\ell}  \mathbf{A} (\Im(\rho_{\ell+1})^{-1}, - \Re (\rho_{\ell+1}))   - 2 \sum_{j = 0}^{\ell} ( \pi - \operatorname{Arg} (\rho_j))
\end{align*} 
Hence,
\begin{align*}
& \Delta \varphi_{\ell, \Lambda, \mu}  =  \left(  \varphi_{\ell, \Lambda, \mu} +  2 \sum_{j = 0}^{\ell-1} ( \pi - \operatorname{Arg} (\rho_j)) \right) \lstar   \mathbf{A}^{-1} (\Im(\rho_{\ell})^{-1}, - \Re (\rho_{\ell}))
\mathbf{Q}(\pi) \mathbf{A} (1, \mu/s_{\ell}) \dots
\\ & \dots  \mathbf{A} (1, (\Lambda - \mu)/s_{\ell})  \mathbf{W}_{\ell}  \mathbf{A} (\Im(\rho_{\ell+1})^{-1}, - \Re (\rho_{\ell+1}))   - \left( \varphi_{\ell, \Lambda, \mu} +  2 \sum_{j = 0}^{\ell} ( \pi - \operatorname{Arg} (\rho_j)) \right)
\\ &  = \left(  \varphi_{\ell, \Lambda, \mu} +  2 \sum_{j = 0}^{\ell} ( \pi - \operatorname{Arg} (\rho_j)) \right) \lstar  \mathbf{Q} ( -2 ( \pi - \operatorname{Arg} (\rho_{\ell}))) \mathbf{A}^{-1} (\Im(\rho_{\ell})^{-1}, - \Re (\rho_{\ell}))
\mathbf{Q}(\pi) \mathbf{A} (1, \mu/s_{\ell}) \dots
\\ & \dots  \mathbf{A} (1, (\Lambda - \mu)/s_{\ell})  \mathbf{W}_{\ell}  \mathbf{A} (\Im(\rho_{\ell+1})^{-1}, - \Re (\rho_{\ell+1}))   - \left( \varphi_{\ell, \Lambda, \mu} +  2 \sum_{j = 0}^{\ell} ( \pi - \operatorname{Arg} (\rho_j)) \right), 
\end{align*}
and then 
\begin{equation}
   \Delta \varphi_{\ell, \Lambda, \mu}   =  y \lstar \mathbf{V}  \mathbf{S}_{\ell, \Lambda, \mu} - y, \label{deltaxx}
\end{equation}
where $$e^{iy} =   e^{i \varphi_{\ell, \Lambda, \mu}} \overline{ \eta_{\ell}}$$
and 
$$\mathbf{V} =  \mathbf{Q} ( -2 ( \pi - \operatorname{Arg} (\rho_{\ell}))) \mathbf{A}^{-1} (\Im(\rho_{\ell})^{-1}, - \Re (\rho_{\ell}))
\mathbf{Q}(\pi)\mathbf{A} (1, \mu/s_{\ell})  \mathbf{A} (\Im(\rho_{\ell})^{-1}, - \Re (\rho_{\ell})).$$
Now, if we view $\mathbf{Q}(\pi)\mathbf{A} (1, \mu/s_{\ell}) $  as a transformation of the projective line, and if we extend it meromorphically to the Riemann sphere, then the image of
$\rho_{\ell}$ by this extended map is 
$$ - \frac{1}{\rho_{\ell}} +  \mu/s_{\ell} = - \frac{1}{\rho_{\ell}} + \frac{\mu}{ \sqrt{n - \ell -1/2}}  = - \frac{1}{\rho_{\ell}} + \frac{\mu}{\sqrt{\mu^2/4 + n_0 - \ell}}.$$
The last equality is due to the fact that 
$$n_0 = \left( n - \frac{\mu^2}{4}  - \frac{1}{2} \right) \vee 1$$
cannot be equal to $1$ since we assume $0 \leq \ell < n_0 -1$ in the proposition, and then $n_0 =n - \mu^2/4 - 1/2$. 
Since, by construction, $\rho_{\ell}$ is the solution of  the second degree equation 
$$x^2 -  \frac{\mu}{\sqrt{\mu^2/4 + n_0 - \ell}} x + 1 = 0,$$
it is a fixed point of the map $\mathbf{Q}(\pi)\mathbf{A} (1, \mu/s_{\ell})$ on the Riemann sphere. 
It is then easy to deduce that $i$ is a fixed point of $\mathbf{V}$, and a direct computation shows that $\infty$ is also a fixed point. 
Since $\mathbf{V}$ has the form $z \mapsto (az+ b)/(cz+ d)$ for some $a, b, c, d \in \mathbb{R}$, it is equal to the identity when it is viewed as a transformation of the Riemann sphere. Hence,  $\mathbf{V}$ is a translation by a multiple of $2 \pi$ when it is viewed as a transformation on $\mathbb{R}$. 
Since the real and the imaginary parts of $\rho_{\ell}$ are nonnegative, we have $\operatorname{Arg} (\rho_{\ell} )\in [0, \pi/2]$, and then 
$$\pi \lstar \mathbf{Q} ( -2 ( \pi - \operatorname{Arg} (\rho_{\ell}))) \in [-  \pi, 0] .$$
Since for all $a \in \mathbb{R}_+^*$, $b \in \mathbb{R}$, the maps $ x \mapsto x \lstar \mathbf{A}(a,b) $ are increasing and fix the odd multiples of $\pi$, we get 
$$  \pi \lstar \mathbf{Q} ( -2 ( \pi - \operatorname{Arg} (\rho_{\ell}))) \mathbf{A}^{-1} (\Im(\rho_{\ell})^{-1}, - \Re (\rho_{\ell})) \in [- \pi, \pi].$$
On the other hand, a direct computation gives 
$$ \infty .  \mathbf{Q} ( -2 ( \pi - \operatorname{Arg} (\rho_{\ell}))) \mathbf{A}^{-1} (\Im(\rho_{\ell})^{-1}, - \Re (\rho_{\ell}))  = 
0$$
and then necessarily
$$  \pi \lstar \mathbf{Q} ( -2 ( \pi - \operatorname{Arg} (\rho_{\ell}))) \mathbf{A}^{-1} (\Im(\rho_{\ell})^{-1}, - \Re (\rho_{\ell})) = 0,$$
which implies 
$$ \pi \lstar \mathbf{Q} ( -2 ( \pi - \operatorname{Arg} (\rho_{\ell}))) \mathbf{A}^{-1} (\Im(\rho_{\ell})^{-1}, - \Re (\rho_{\ell}))  \mathbf{Q}(\pi)\mathbf{A} (1, \mu/s_{\ell})  \mathbf{A} (\Im(\rho_{\ell})^{-1}, - \Re (\rho_{\ell})) = \pi.$$
Hence, $\mathbf{V}$ induces the identity map on $\mathbb{R}$. 
From \eqref{deltaxx}, we deduce 
$$  \Delta \varphi_{\ell, \Lambda, \mu}   =  y \lstar  \mathbf{S}_{\ell, \Lambda, \mu} - y.$$
On the other hand, a direct computation shows that 
$$ x \lstar  \mathbf{S}_{\ell, \Lambda, \mu} -  x= 0$$
when $x = \pi$, which implies
$$\Delta \varphi_{\ell, \Lambda, \mu}   = y \lstar  \mathbf{S}_{\ell, \Lambda, \mu} - y  -   x \lstar  \mathbf{S}_{\ell, \Lambda, \mu} + x$$
where $$e^{ix} = -1, \quad e^{iy} =   e^{i \varphi_{\ell, \Lambda, \mu}} \overline{ \eta_{\ell}},$$
i.e. 
$$\Delta \varphi_{\ell, \Lambda, \mu}
= \operatorname{ash} (\mathbf{S}_{\ell, \Lambda, \mu},
  -1, e^{i \varphi_{\ell, \Lambda, \mu}} \overline{ \eta_{\ell}}).$$
\end{proof}

 As in \cite{VV}, equation (50), we now introduce the following parameter: 
$$\lambda := 2 \sqrt{n_0}(\Lambda - \mu).$$
Our proof of Theorem \ref{boundvariance2017} is based on the following key estimates:
\begin{proposition} \label{estimatephi}
Let  $\mu \geq 0$,  $\Lambda \in \mathbb{R}$ such that 
$|\lambda| \leq n_0^{1/10}$, and $|\lambda| \leq 1$ if
$n_0 \leq n^{5/6}$. Then, for 
$$\ell := \max(0, \lceil n_0 - \mu^{2/3} - 1 \rceil ),$$
 which implies that $0 \leq \ell < n_0$,  the following holds: 
\begin{equation}
\sum_{j = 0}^{\ell - 1} \operatorname{Arg} (\rho_j)
= \frac{n}{2} \int_{(\mu/\sqrt{n}) \wedge 2}^{2} \sqrt{4 - x^2} dx + O(1).
\label{semicircle}
\end{equation}
\begin{equation}
 \mathbb{E} [(\varphi_{\ell, \mu, \mu})^2 ] = O(\log(2+n_0)),
 \label{boundphi1}
 \end{equation}
\begin{equation}
\mathbb{E} [(\varphi_{\ell,\Lambda, \mu} -\varphi_{\ell,\mu, \mu} - \lambda )^2 ] = O(\log(2 +|\lambda|)), \label{boundphi2}
\end{equation}
\begin{equation}
\mathbb{E} [(\tphh_{\ell, \mu} + 2 \pi (n-\ell))^2] = O(1). \label{boundtphh}
\end{equation}
Here, the implicit constant depends only on $\beta$.
\end{proposition}

The proof of  Proposition \ref{estimatephi}, which is very technical, is postponed to Section \ref{estphi}. 
We now prove that Proposition \ref{estimatephi} implies Theorem \ref{boundvariance2017}. 
\begin{proof} 
 We can assume $0 \leq \Lambda_1 < \Lambda_2$: the case $ \Lambda_1 < \Lambda_2 \leq 0$ is equivalent
by the symmetry of the distribution of the Beta Ensemble, and
for $\Lambda_1 < 0 <  \Lambda_2$, one can split the interval
into two pieces $(\Lambda_1,  0]$ and $(0, \Lambda_2)$.
Now, let us assume $\Lambda_2 = \infty$.
Then, for $\mu = \Lambda_1$ and $\ell$ satisfying Proposition \ref{estimatephi}, we get the following estimate, by applying the Minkowski inequality to a big telescopic sum: 
\begin{align} & 2 \pi \left(\mathbb{E} [(N_n(\Lambda_1, \infty) - N_{sc}(\Lambda_1, \infty))^2]\right)^{1/2} \nonumber
\\ & = 2 \pi \left(\mathbb{E} [( N_n(-\infty, \Lambda_1) - n + N_{sc}(\Lambda_1, \infty))^2]\right)^{1/2} \nonumber \\  & \leq \left(\mathbb{E} [(2 \pi  N_n(-\infty, \Lambda_1) - (\hat{\varphi}_{\ell, \mu} -
\tphh_{\ell, \mu}))^2]\right)^{1/2}
\label{abc1} \\ & \, +   \left( \mathbb{E} [(
 \hat{\varphi}_{\ell, \mu} - \varphi_{\ell, \mu, \mu} -  2 \sum_{j = 0}^{\ell - 1} (  \pi - \operatorname{Arg} (\rho_j)) )^2] \right)^{1/2}
\label{abc2} \\ & +  \left( \mathbb{E} [ ( - 2 \sum_{j = 0}^{\ell - 1} \operatorname{Arg} (\rho_j) + n \int_{\mu/\sqrt{n}}^{\infty} 
\sqrt{(4 - x^2)_+}  \,  dx )^2 ]\right)^{1/2} \label{abc3} 
\\ & \,  + \left( \mathbb{E} [(\tphh_{\ell, \mu} + 2\pi(n-\ell))^2] \right)^{1/2} \label{abc4} \\ & \, +   \left( \mathbb{E} [(\varphi_{\ell, \mu, \mu})^2] \right)^{1/2}. \label{triangleL2inequality}
\end{align}
By Proposition \ref{numbereigenvalues}, the  term \eqref{abc1}
is $O(1)$. By the definition of $\varphi_{\ell, \mu, \mu}$ and
the fact that
 $\mathbf{A} (\Im(\rho_{\ell})^{-1}, - \Re (\rho_{\ell}))$
 does not change the argument by more than $2 \pi$, the
term  \eqref{abc2} is also $O(1)$. Moreover, by
 Proposition \ref{estimatephi}, the terms \eqref{abc3} and \eqref{abc4} are $O(1)$, whereas the term  \eqref{triangleL2inequality} is
 $O( \sqrt{\log (2 + n_0)})$. Hence,
 \begin{equation}
 \mathbb{E} [(N_n(-\Lambda_1, \infty) - N_{sc}(\Lambda_1, \infty))^2] = O( \log (2 + n_0)), \label{qwertyuiopasdfghjkl}
 \end{equation}
which gives the theorem in the case $\Lambda_2  = \infty$.

Let us now suppose that $\Lambda_2 < \infty$ and
$\Lambda_2 - \Lambda_1 \geq \frac{1}{2 \sqrt{n}}(n_0(\Lambda_1))^{1/10}$.
Subtracting the estimates  \eqref{qwertyuiopasdfghjkl} for the intervals
$( \Lambda_1, \infty)$ and $( \Lambda_2, \infty)$ gives
the following:
$$\mathbb{E} [(N_n(\Lambda_1, \Lambda_2) - N_{sc}(\Lambda_1, \Lambda_2))^2]  = O( \log (2 + n_0(\Lambda_2)) +
\log (2 + n_0(\Lambda_1))) = O(\log (2 + n_0(\Lambda_1))),$$
since $n_0(\Lambda_1) \geq n_0(\Lambda_2)$. Now,
$\sqrt{n} (\Lambda_2 - \Lambda_1)
\geq \frac{1}{2} (n_0(\Lambda_1))^{1/10}$,
hence,
$$\log (2 + n_0(\Lambda_1)) = O( \log ( 2+ (\sqrt{n}(\Lambda_2 - \Lambda_1) ) \wedge n)),$$
which proves the theorem also in this case.

The remaining case is when $\Lambda_2 - \Lambda_1 \leq
 \frac{1}{2 \sqrt{n}}(n_0(\Lambda_1))^{1/10}$. Taking $\mu = \Lambda_1$ and $\Lambda = \Lambda_2$ gives $|\lambda| \leq \sqrt{n_0/n}( n_0^{1/10}) \leq n_0^{1/10}$, and for
 $n_0 \leq n^{5/6}$, $$|\lambda| \leq n_0^{3/5}n^{-1/2}
 \leq n^{(3/5)(5/6)} n^{-1/2} = 1.$$
  Moreover, we have the big telescopic sum:
\begin{align*}2 \pi( N_n(\Lambda_1, \Lambda_2) - N_{sc}(\Lambda_1, \Lambda_2))& = (2 \pi N_n(-\infty, \Lambda_2) - (\hat{\varphi}_{\ell, \Lambda} - \tphh_{\ell, \Lambda}))
 \\ & +  \left(
 \hat{\varphi}_{\ell, \Lambda} - \varphi_{\ell, \Lambda, \mu} - 2 \sum_{j = 0}^{\ell - 1} (\pi - \operatorname{Arg} (\rho_j)) \right)
\\ & -    \left(
 \hat{\varphi}_{\ell, \mu} - \varphi_{\ell, \mu, \mu} - 2 \sum_{j = 0}^{\ell - 1} (\pi - \operatorname{Arg} (\rho_j)) \right)
 \\ & + ( \tphh_{\ell, \mu} + 2 \pi (n-\ell)) 
 - (2 \pi N_n(-\infty, \Lambda_1) - (\hat{\varphi}_{\ell, \mu} - \tphh_{\ell, \mu}))
 \\ &  -( \tphh_{\ell, \Lambda} + 2 \pi (n-\ell))    +
 ( \varphi_{\ell, \Lambda, \mu} - \varphi_{\ell, \mu, \mu}
-  \lambda) + ( \lambda -  2 \pi N_{sc}(\Lambda_1, \Lambda_2)).
\end{align*}
 Bounding the $L^2$ norm as in \eqref{triangleL2inequality},
 we deduce, from all the estimates of Proposition
 \ref{estimatephi}, that
 $$\mathbb{E} [(N_n(\Lambda_1, \Lambda_2) - N_{sc}(\Lambda_1, \Lambda_2))^2] = O(\log ( 2 + |\lambda|) + ( \lambda -
 2 \pi N_{sc}(\Lambda_1, \Lambda_2))^2).$$
 Now,
 $$\log ( 2+ |\lambda|) \leq \log ( 2 + 2 \sqrt{n_0}
 (\Lambda_2 - \Lambda_1) \wedge n_0^{1/10} )
 \leq 2  \log ( 2 +  \sqrt{n}
 (\Lambda_2 - \Lambda_1) \wedge n),$$
 hence, it is sufficient to check that
 $$\lambda -
 2 \pi N_{sc}(\Lambda_1, \Lambda_2) = O(1).$$
One has the upper bound:
 \begin{align*}2 \pi N_{sc}(\Lambda_1, \Lambda_2)
 & \leq n \, \frac{\Lambda_2 - \Lambda_1}{\sqrt{n}} \,
 \sqrt{(4 - \Lambda_1^2/n)_+} = 2 (\Lambda_2 - \Lambda_1)
 \sqrt{(n- \mu^2/4)_+} \\ & \leq 2 (\Lambda_2 - \Lambda_1) \sqrt{n_0 + 1} = \lambda ( 1 + n_0^{-1})^{1/2}
 =  \lambda + O(n_0^{-9/10}) = \lambda + O(1).
 \end{align*}

 If $n_0 \leq n^{5/6}$, we have $|\lambda| \leq 1$, and
 then $2 \pi N_{sc}(\Lambda_1, \Lambda_2) = O(1)$,
 which gives the desired bound. We can now suppose
 $n_0 \geq n^{5/6}$. If $n \geq 10$ (the case $n \leq 9$ of the
 theorem is trivial), one deduces
 $$n - \frac{\Lambda_1^2}{4} - \frac{1}{2} \geq n^{5/6}$$
 and
 $$\Lambda_1 \leq 2 \sqrt{ n - n^{5/6} - \frac{1}{2}}
 \leq 2 \sqrt{n} - n^{1/3}.$$
 Hence,
 $$\Lambda_2 \leq 2 \sqrt{n} - n^{1/3} + \frac{\lambda}{2
 \sqrt{n_0}} \leq 2 \sqrt{n} - n^{1/3} + \frac{1}{2} n_0^{-2/5} \leq 2 \sqrt{n}.$$
 In other words, $(\Lambda_1/\sqrt{n}, \Lambda_2/\sqrt{n})$
 is included in the support of the semicircle law. Using
  the inequality $\sqrt{a-b} \geq \sqrt{a} - \sqrt{b}$ for
  $0 \leq b \leq a$, we get
 \begin{align*}2 \pi N_{sc}(\Lambda_1, \Lambda_2)
& \geq n \, \frac{\Lambda_2 - \Lambda_1}{\sqrt{n}} \,
 \sqrt{4 - \Lambda_2^2/n} \\ & \geq
  n \, \frac{\Lambda_2 - \Lambda_1}{\sqrt{n}} \,
 \left(\sqrt{4 - \Lambda_1^2/n} - \sqrt{(\Lambda_1^2 - \Lambda_2^2)/n}\right).
 \end{align*}
Now,
\begin{align*}
n \, \frac{\Lambda_2 - \Lambda_1}{\sqrt{n}}
\sqrt{4 - \Lambda_1^2/n} & \geq 2 (\Lambda_2 - \Lambda_1)
\sqrt{n- \Lambda_1^2/4} \geq 2   (\Lambda_2 - \Lambda_1)
\sqrt{n_0 - 1} =  \lambda \sqrt{ 1 - n_0^{-1}}
\\ & =  \lambda + O(n_0^{-9/10}),
\end{align*}
and
\begin{align*}n \, \frac{\Lambda_2 - \Lambda_1}{\sqrt{n}}
\sqrt{( \Lambda_2^2 - \Lambda_1^2)/n}
& = (\Lambda_2 - \Lambda_1)^{3/2} (\Lambda_2 + \Lambda_1)^{1/2} \\ & \leq  2^{-3/2}n_0^{-3/4} \lambda^{3/2} (4
\sqrt{n})^{1/2}.
\end{align*}
Since by assumption, $n_0 \geq n^{5/6}$, we deduce that
$$n \, \frac{\Lambda_2 - \Lambda_1}{\sqrt{n}}
\sqrt{( \Lambda_1^2 - \Lambda_2^2)/n}
\leq n_0^{-3/4}n_0^{3/20}n^{1/4} = n_0^{-3/5} n^{1/4}
 \leq n^{-(3/5)(5/6)}n^{1/4} = n^{-1/4} \leq 1,$$
 which completes the proof of the theorem.
 \end{proof} 
\section{Proof of Proposition \ref{estimatephi} }  \label{estphi} 
 From now, we suppose that the assumptions of Proposition \ref{estimatephi} are satisfied and that $\ell$ is equal to the positive part of $\lceil n_0 - \mu^{2/3} - 1\rceil$. Moreover, 
all the implicit constants in the estimates are allowed to depend only on $\beta$.

The present section is divided into five subsections. In the first subsection, we state two results adapted from \cite{VV}, which are needed in our proof of Proposition \ref{estimatephi}. 
In each of the four last subsections, we show one of the four estimates of the proposition. 
\subsection{Results adapted from Valk\'o and Vir\'ag  \cite{VV}}
The first result we need is the following: 
\begin{proposition}
\label{VV22}
We assume that $\ell \geq 1$, and then $n_0 > 1$, which implies 
$n_0 = n - \mu^2/4 - 1/2$.  
For $0 \leq j \leq \ell - 1$,  $\lambda \in [-n_0^{1/10}, n_0^{1/10}]$, let us define
$$\Delta \varphi_{j, \lambda} := \varphi_{j + 1, \Lambda , \mu} -  \varphi_{j, \Lambda , \mu},$$
where $$\Lambda = \mu + \frac{\lambda}{2 \sqrt{n_0}}.$$ Then, for $k = n_0 - j$, and $t = j/n_0$, we have 
\begin{equation} 
\mathbb{E} [\Delta \varphi_{j, \lambda} | \mathcal{F}_{j}]
= \frac{\lambda}{ 2 \sqrt{k n_0}} +  \frac{ b_0(t) }{n_0} + \frac{osc_{1,j}}{n_0} + O(k^{-3/2}) = O(k^{-1} + |\lambda| (k n_0)^{-1/2}) = O \left(k^{-1} + k^{-1/2} n_0^{-2/5} \right), \label{formuladeltaphi}
\end{equation}
where
$\mathcal{F}_{j}$ denotes the
$\sigma$-algebra generated by $X_0, \dots, X_{j-1}, Y_0, \dots, Y_{j-1}$, and where 
$$b_0(t) = \frac{\Im (\rho(t)^2)}{2 \beta (1-t)} -
\frac{\Re (\rho'(t))}{\Im (\rho(t))},$$
$$\rho(t):=
 \sqrt{\frac{\mu^2/4}{\mu^2/4 + n_0(1-t)}}
 + i \, \sqrt{\frac{n_0(1-t)}{\mu^2/4 + n_0(1-t)}},$$
 $$osc_{1,j} = \Re ((-v_0(t) + \lambda/(2 \sqrt{1-t}) - i q(t)/2)e^{-i \varphi_{j, \Lambda, \mu}} \eta_j ) - \Re (i q(t) e^{-2 i \varphi_{j, \Lambda, \mu}} \eta^2_j)/4,$$
 $$v_0(t) = \frac{\rho'(t)}{\Im(\rho (t))}, \; \;
q(t) = \frac{2 (1 + \rho(t)^2)}{\beta (1-t)}.$$
Moreover, 
\begin{equation} \mathbb{E} [ (\Delta  \varphi_{j, \lambda} )^d ] = O(k^{-d/2}) \label{moment234} 
\end{equation} 
for $d \in \{2,3,4\}$. 
All these estimates are uniform in $\lambda \in [-n_0^{1/10}, n_0^{1/10}]$.
\end{proposition}
This result is  (after correcting a sign error  in the last term of $osc_{1,j}$, which has no impact in our proof) a part of Proposition 22 of \cite{VV}, slightly modified in order to handle the uniformity in $\lambda$ and to get an estimate of the fourth moment of $\Delta  \varphi_{j, \lambda}$. 
\begin{proof}
By Proposition \ref{ash}, we have 
 $$\Delta \varphi_{j, \Lambda, \mu}
  = \varphi_{j+1 , \Lambda, \mu} -\varphi_{j, \Lambda, \mu} = \operatorname{ash} (\mathbf{S}_{\ell, \Lambda, \mu},
  -1, e^{i \varphi_{j, \Lambda, \mu}} \overline{ \eta_{j}}),$$
and then 
$$\Delta \varphi_{j, \lambda}
 = \operatorname{ash} (\mathbf{S}_{\ell, \Lambda, \mu},
  -1, e^{i \varphi_{j, \Lambda, \mu}} \overline{ \eta_{j}}),$$
for $\Lambda = \mu + \lambda/(2 \sqrt{n_0})$. 
Now, if we extend $\mathbf{S}_{\ell, \Lambda, \mu}$ meromorphically to a transformation of the Riemann sphere or the Poincar\'e half-plane $\{z \in \mathbb{C}, \Im(z) > 0\}$, we have 
$$ Z_{j, \lambda} := i. \mathbf{S}^{-1}_{\ell, \Lambda, \mu} - i = v_{j, \lambda} + V_{j} $$
where $ i. \mathbf{S}^{-1}_{\ell, \Lambda, \mu}$ denotes the image of $i$ by $\mathbf{S}^{-1}_{\ell, \Lambda, \mu}$, and 
$$v_{j, \lambda} = - \frac{\lambda}{2 n_0 \hat{s} (t)}  + \frac{\rho_{j+1} - \rho_j}{ \Im (\rho_j)}, \quad  V_{j} = \frac{X_j + \rho_{j+1} Y_j}{ \sqrt{n_0} \hat{s}(t)},$$
$$\hat{s} (t) = \sqrt{1 -t} = \sqrt{k/n_0}.$$
These formulas  correspond to equations (62), (63), (64) is \cite{VV} and can be proven by direct computation, using the definition of $\mathbf{S}^{-1}_{\ell, \Lambda, \mu}$ given in Proposition \ref{ash}, and the fact that $s_j \Im(\rho_j) = \sqrt{k}$, because 
$$s_j = \sqrt{n - (1/2) -j },  \quad \Im(\rho_j) = \sqrt{\frac{n_0 - j}{ \mu^2/4 + n_0 - j}} = \sqrt{\frac{n_0 - j}{ n  - j - 1/2}}, \quad k  = n_0 - j.$$
We then use Lemma 16 of \cite{VV} in order to estimate $\Delta \varphi_{j, \lambda}$. We get the equation (72) of \cite{VV}, which gives
$$\Delta \varphi_{j, \lambda} = - \Re Z + \frac{\Im (Z^2)}{4} - \Re (  \bar{z} \eta Z + i \bar{z} \eta Z^2/2 + i \bar{z}^2 \eta^2 Z^2/4) + O(|Z|^3),$$
for $Z = Z_{j, \lambda}$, $\eta = \eta_j$ and $z = e^{i \varphi_{j, \Lambda, \mu}}$. 
The only randomness in $Z_{j, \lambda}$ comes from $X_j$ and $Y_j$, and then  $Z_{j, \lambda}$ is independent of $\mathcal{F}_j$, whereas 
$\eta_j$ is deterministic and $e^{i \varphi_{j, \Lambda, \mu}}$ is  $\mathcal{F}_j$-measurable. We deduce 
\begin{equation}
\mathbb{E} [ \Delta \varphi_{j, \lambda} | \mathcal{F}_j] = - \Re (\mathbb{E} [  Z]) + \frac{ \Im( \mathbb{E} [ Z^2])}{4} - \Re (   \bar{z} \eta \mathbb{E}[ Z] + i \bar{z} \eta \mathbb{E} [  Z^2]/2 + i \bar{z}^2 \eta^2 \mathbb{E} [  Z^2]/4 ) + O ( \mathbb{E} [ |Z|^3]). \label{equationDeltaZ} 
\end{equation}
We have by equation (65) of \cite{VV}, 
 $$v_{j,0} = \frac{v_0(t)}{n_0} + O(k^{-2})$$
and the estimate 
$$|v_0(t)| = O(n_0/k).$$
Since
$$v_{j, \lambda} = - \frac{\lambda}{2 n_0 \hat{s} (t)} + v_{j,0},$$
we deduce 
$$v_{j, \lambda} = - \frac{\lambda}{2 n_0 \hat{s} (t)}  + \frac{v_0(t)}{n_0} + O(k^{-2}) = - \frac{\lambda}{2 n_0 \hat{s} (t)} + O(1/k).$$
Notice that the last part of  \cite{VV}, equation (65) does not hold uniformly in $\lambda$.  However, since we assume $|\lambda| \leq n_0^{1/10}$, we have
$$v_{j, \lambda}  =- \frac{\lambda}{2 n_0 \sqrt{k/n_0}} + O(1/k) = O\left( \frac{n_0^{1/10}}{\sqrt{k n_0}} \right) + O(1/k) 
= O (n_0^{-0.4} k^{-0.5}) + O(1/k) = O(k^{-0.9}).$$

On the other hand, \cite{VV}, equation (66) gives the estimates: 
$$\mathbb{E} [ V_j] = O((n-j)^{-3/2} k^{-1/2}), \quad \mathbb{E} [ V_j^2 ]= \frac{q(t)}{n_0} + O((n-j)^{-1/2} k^{-3/2}), \quad \mathbb{E} [ |V_j|^d ] = O(k^{-d/2})$$
for $d \in \{3,4\}$. 
Since $n-j \geq n_0 - j =k$, the error terms in $\mathbb{E} [ V_j]$ and  $\mathbb{E} [ V_j^2 ]$ are dominated by $k^{-2}$. Hence, 
$$\mathbb{E} [ Z ] =v_{j, \lambda} +  \mathbb{E} [V_j] = - \frac{\lambda}{2 n_0 \hat{s} (t)}  + \frac{v_0(t)}{n_0} + O(k^{-2}) 
= \frac{1}{n_0} \left( -  \frac{\lambda}{2\sqrt{1-t} } + v_0(t) \right) +  O(k^{-2}).$$
Similarly, 
$$\mathbb{E} [ Z^2 ]  =v^2 _{j, \lambda}   + 2 v_{j, \lambda}\mathbb{E} [V_j] +\mathbb{E} [V^2_j]
= O(k^{-1.8}) + O(k^{-0.9}) O(k^{-2}) +   \frac{q(t)}{n_0} + O(k^{-2}) =     \frac{q(t)}{n_0} + O(k^{-1.8}).$$
Hence, 
\begin{align*} 
 - \Re (\mathbb{E} [  Z]) + \frac{ \Im( \mathbb{E} [ Z^2])}{4} & =  \frac{\lambda} {2 \sqrt{k n_0}} + \frac{1}{n_0} \left( - \Re (v_0(t))  + \Im ( q(t))/4 \right) + O(k^{-1.8})
\\ & =  \frac{\lambda} {2 \sqrt{k n_0}} + \frac{1}{n_0} \left( - \frac{\Re (\rho'(t))}{ \Im(\rho(t))}   +  \frac{ \Im (\rho(t)^2)}{2 \beta (1-t)} \right) + O(k^{-1.8})
\\ & =  \frac{\lambda} {2 \sqrt{k n_0}} + \frac{ b_0(t)}{ n_0} +  O(k^{-1.8}). 
\end{align*} 
Similarly, we have 
$$- \Re (   \bar{z} \eta \mathbb{E}[ Z] + i \bar{z} \eta \mathbb{E} [  Z^2]/2 + i \bar{z}^2 \eta^2 \mathbb{E} [  Z^2]/4 )  =\frac{osc_{1,j}}{n_0} + O(k^{-1.8}),$$
and then  by \eqref{equationDeltaZ}: 
$$\mathbb{E} [ \Delta \varphi_{j, \lambda} | \mathcal{F}_j] =  \frac{\lambda} {2 \sqrt{k n_0}} + \frac{ b_0(t)}{ n_0} +\frac{osc_{1,j}}{n_0} +  O ( \mathbb{E} [ |Z|^3]) +   O(k^{-1.8}). $$
Now, 
$$\mathbb{E} [ |Z|^3 ] = O(  |v_{j, \lambda}|^3 + \mathbb{E}[ | V_{j} |^3] ) = O ( k^{-2.7} + k^{-3/2}) = O(k^{-3/2}),$$
and then we get the first part of  \eqref{formuladeltaphi}.
Moreover, equation (65) of \cite{VV} gives the estimate 
$v_0 (t)  = O(n_0/k)$,  whereas we have from the definition of $q$:
$$q(t) = O \left(\frac{1}{1-t} \right) = O( n_0/k)$$
and then 
$$b_0(t) = \Im (q(t))/4 - \Re (v_0(t)) = O(n_0/k).$$
Injecting these estimates in the first part of  \eqref{formuladeltaphi} gives the second part. The third part is immediately deduced from the second part by using the assumption $|\lambda| \leq n_0^{1/10}$. 
Moreover, using Lemma 16 of \cite{VV}, more precisely the last estimate of equation (37) in this lemma, we get 
$$\Delta \varphi_{j, \lambda}
 = \operatorname{ash} (\mathbf{S}_{\ell, \Lambda, \mu},
  -1, e^{i \varphi_{j, \lambda}} \overline{ \eta_{j}}) = O(|Z|),$$
and then for $d \in \{2,3,4\}$, 
$$\mathbb{E} [ |\Delta \varphi_{j, \lambda} |^d] = O (  |v_{j, \lambda}|^d + \mathbb{E}[ | V_{j} |^d] ) = O(k^{-0.9 d} + k^{-d/2}) = O(k^{-d/2}),$$
which gives \eqref{moment234}. 
\end{proof} 
The other result we need is the following: 
\begin{lemma} \label{VV37} 
Let $\ell_0$ and $\ell_1$ be integers such that $0 \leq \ell_0 < \ell_1 \leq n_0$. Then, for $\ell_0 \leq j \leq \ell_1$, we have
$$\sum_{m= \ell_0}^j \eta_m = O \left( \mu k^{-1/2} + 1 \right), \quad \sum_{m= \ell_0}^j \eta^2_m  =  O \left( \mu k^{-1/2} + \mu^{-1} k_0^{1/2} \right).$$
Moreover, for any complex numbers $(g_j)_{\ell_0 \leq j \leq \ell_1}$, 
$$\left| \Re \left( \sum_{j = \ell_0}^{\ell_1}  g_j \eta_j \right) \right| = O \left( ( \mu k_1^{-1/2} + 1 ) |g_{\ell_1}| + \sum_{j= \ell_0}^{\ell_1-1} ( \mu k^{-1/2} + 1 ) | g_{j+1} - g_j| \right)$$
and
$$ \left| \Re \left( \sum_{j = \ell_0}^{\ell_1}  g_j \eta^2_j \right) \right| = O \left( ( \mu k_1^{-1/2} +  \mu^{-1} k_0^{1/2} ) |g_{\ell_1}| + \sum_{j= \ell_0}^{\ell_1-1} ( \mu k^{-1/2} +  \mu^{-1} k_0^{1/2} ) | g_{j+1} - g_j| \right),$$
Here, 
$$k_0 := n_0 - \ell_0, \quad  k_1 := n_0 - \ell_1, \quad  k := n_0 - j.$$
\end{lemma} 

\begin{proof}
From the definition of $\rho_j$, one checks that the argument of $\rho_j$ is decreasing in $j$ and stays in the interval $[0, \pi/2]$. Moreover, 
we have  
$$ \tan ( \operatorname{Arg} (\rho_j) ) =  2 \mu^{-1} k^{1/2}, \quad \tan ( (\pi/2) -   \operatorname{Arg} (\rho_j) ) =  \mu k^{-1/2}/2,$$
which implies that 
$$( \operatorname{Arg} (\rho_j) )^{-1} = O (1 +  \mu k^{-1/2}), \quad ( (\pi/2) -   \operatorname{Arg} (\rho_j) )^{-1} = O (1 +  \mu^{-1} k^{1/2}).$$
Using the definition of $\eta_m$ in terms of the $\rho_j$'s and applying Lemma 36 of \cite{VV}, we deduce the first part of the lemma. 
The second part of the lemma exactly corresponds to Lemma 37 of \cite{VV}. 
\end{proof}

\subsection{Proof of \eqref{semicircle}}
\begin{proof} 
In this subsection, we prove the estimate: 
$$\sum_{j = 0}^{\ell - 1} \operatorname{Arg} (\rho_j)
= \frac{n}{2} \int_{(\mu/\sqrt{n}) \wedge 2}^{2} \sqrt{4 - x^2} dx + O(1).$$
 If $n \geq 2$ (which can be assumed) and $n_0 = 1$, then $\mu \geq \sqrt{4n-6}$, and
 necessarily $\ell = 0$. The left-hand side of
 \eqref{semicircle} is empty: hence, it is sufficient to check
 $$n \int_{\sqrt{4 - (6/n)}}^{2} \sqrt{4 - x^2} \, dx = O(1),$$
 which is straightforward.
 If $n_0 > 1$, then $n_0 = n - \mu^2/4 - (1/2)$ and
 $$\operatorname{Arg} (\rho_j) = \arccos
\left( \sqrt{ \frac{\mu^2}{4n - 4j - 2}} \right).$$
 This quantity is decreasing in $j$.  Hence,
$$\sum_{j= 1}^{\ell - 1} \operatorname{Arg} (\rho_j)
\leq \int_0^{\ell}  \arccos
\left( \sqrt{ \frac{\mu^2}{4n - 4x - 2}} \right) \, dx
\leq  \sum_{j= 0}^{\ell-1} \operatorname{Arg} (\rho_j).$$
Since all the arguments here are $O(1)$, \eqref{semicircle}
is equivalent to
$$\int_{1/2}^{ 1/2 + \ell}  \arccos
\left( \frac{\mu}{2\sqrt{n-y}} \right) \, dy
=  \frac{n}{2} \int_{\mu/\sqrt{n}}^2 \sqrt{4 - x^2} dx + O(1)
$$
(note that $\mu/\sqrt{n} \leq 2$, since $n_0 > 1$),
or to
$$\int_{0}^{ \ell}  \arccos
\left( \frac{\mu}{2\sqrt{n-y}} \right) \, dy
=   \frac{n}{2} \int_{\mu/\sqrt{n}}^2 \sqrt{4 - x^2} dx + O(1).
$$
Now, it is not difficult to check the equality:
$$\int_{0}^{ n_0 + 1/2}\arccos
\left( \frac{\mu}{2\sqrt{n-y}} \right) \, dy
  = \int_{0}^{n - (\mu^2/4)}\arccos
\left( \frac{\mu}{2\sqrt{n-y}} \right) \, dy
  = \frac{n}{2} \int_{\mu/\sqrt{n}}^2 \sqrt{4 - x^2} \, dx.$$
Hence, \eqref{semicircle} is satisfied if and only if
$$\int_{\ell}^{ n_0 }\arccos
\left( \frac{\mu}{2\sqrt{n-y}} \right) \, dy = O(1).$$
Now,
\begin{align*}
\int_{\ell}^{ n_0 }\arccos
\left( \frac{\mu}{2\sqrt{n-y}} \right) \, dy
& \leq (n_0 - \ell) \arccos
\left( \frac{\mu}{2\sqrt{n-\ell}} \right)
\\ & = (n_0 - \ell) \arctan \left(\frac{2}{\mu}
\sqrt{n - \ell - (\mu^2/4)} \right)
\\ & = (n_0 - \ell) \arctan \left(\frac{2}{\mu}
\sqrt{n_0 - \ell + (1/2)} \right)
\\ & = O( (n_0 - \ell+1) \wedge \mu^{-1} (n_0- \ell + 1)^{3/2}),
\end{align*}
which is $O(1)$ since $n_0 - \ell \leq 1 + \mu^{2/3}$ by the definition of $\ell$. This completes the proof of \eqref{semicircle}.
\end{proof}
\subsection{Proof of \eqref{boundphi1}}
\begin{proof}
 In this subsection, we prove the estimate: 
$$\mathbb{E} [(\varphi_{\ell, \mu, \mu})^2 ] = O(\log(2+n_0)).$$
 If $\ell = 0$, it is trivial, so we can assume that
we are in the situation where $\ell \geq 1$. 
Then, we  decompose the phase $\varphi_{\ell, \mu, \mu}$ as the sum of 
a martingale part and a predictible part, with respect to the filtration $(\mathcal{F}_{r})_{0 \leq r \leq \ell}$, where 
we recall that $\mathcal{F}_{r}$ is the
$\sigma$-algebra generated by $X_0, \dots, X_{r-1}, Y_0, \dots, Y_{r-1}$.  More precisely, we define, for 
 $0 \leq r \leq \ell - 1$:
$$\Delta \varphi_r := \Delta \varphi_{r,0} =  \varphi_{r+1, \mu, \mu}
- \varphi_{r, \mu, \mu}$$
and for $0 \leq r \leq \ell$,
$$M_{r} := \varphi_{r, \mu, \mu} - \sum_{j = 0}^{r-1}
\mathbb{E} [\Delta \varphi_j | \mathcal{F}_{j}].$$
For $0 \leq r \leq \ell$, we then have the decomposition: 
$$\varphi_{r, \mu, \mu} = M_{r} +   \sum_{j = 0}^{r-1}
\mathbb{E} [\Delta \varphi_j | \mathcal{F}_{j}],$$
where $(M_{r})_{0 \leq r \leq \ell}$ is a martingale
with respect to $(\mathcal{F}_{r})_{0 \leq r \leq \ell}$, and the sum in $j$ is predictable with respect to the same filtration. 

The bound \eqref{boundphi1} will then be deduced from the following estimates: 
 \begin{equation}
\mathbb{E} [M_{\ell}^2]  = O(\log (2 + n_0)) \label{estimatemartingale11} 
\end{equation} 
and 
\begin{equation}
\mathbb{E}\left[\left(\sum_{j = 0}^{\ell-1}
\mathbb{E} [\Delta \varphi_j | \mathcal{F}_{j}] \right)^2
\right] = O (1). \label{boundsumconditionalexpectation}
\end{equation}
 
The estimate  \eqref{estimatemartingale11} can then be quickly obtained as follows. One has $M_0 = \pi$ and for $0 \leq r \leq
\ell -1$,
$$\mathbb{E} [(M_{r+1} - M_r)^2]
= \mathbb{E} [(\Delta \varphi_r -
\mathbb{E} [\Delta \varphi_r |\mathcal{F}_r])^2]
\leq  \mathbb{E} [(\Delta \varphi_r)^2] = O(1/k),
$$
where $k = n_0 - r$,  the last estimate corresponding to \eqref{moment234} in Proposition \ref{VV22}, for $d = 2$. 
Hence,  
$$\mathbb{E} [M_{\ell}^2] = \pi^2 + \sum_{ r= 0}^{\ell-1}
\mathbb{E} [(M_{r+1} - M_r)^2]
= O\left( \pi^2 + \sum_{k= n_0 - \ell+1}^{n_0} \frac{1}{k} \right)
 = O(\log (2 + n_0)),$$
which proves \eqref{estimatemartingale11}. 
In the last expression, notice that $n_0$ is not necessarily an integer: in this case, we use the convention
 $$\sum_{a = b}^c y_a := \sum_{a = 0}^{c-b} y_{a + b}$$
 if $c-b$ is an integer, even if $b$ and $c$ are not integers themselves. This convention will be implicitly used several times in the sequel of the paper.

It now remains to prove  \eqref{boundsumconditionalexpectation}.
By  \eqref{formuladeltaphi} in Proposition \ref{VV22}, it is enough to show the following estimates:

\begin{equation}\left( \sum_{j=0}^{\ell - 1} b_0(j/n_0) \right)^2
 =  O(n_0^2) \label{boundbb}
 \end{equation}
 and
 \begin{equation}
 \mathbb{E} \left[ \left(\sum_{j=0}^{\ell - 1}
 osc_{1,j} \right)^2 \right] = O(n_0^2 ),
 \label{boundosc}
 \end{equation}
when $\lambda  = 0$. 
 One has:
 \begin{align*}b_0(j/n_0) & =
 \frac{\mu \sqrt{n_0 -j}}{2 \beta (1 - (j/n_0))
 (\mu^2/4 + n_0 - j)} - \frac{n_0 \mu/4}
 {\sqrt{n_0 -j} (\mu^2/4 + n_0 - j)}
 \\ & = O \left(\frac{n_0 \mu}
 {\sqrt{n_0 -j} (\mu^2 + n_0 - j)}\right),
 \end{align*}
 and then
 $$\sum_{j = 0}^{\ell-1} b_0(j/n_0)
  = O \left( n_0 \mu \sum_{k=1}^{\infty}
  \frac{1}{(\mu^2 + k) \sqrt{k}} \right),$$
  where
  $$\sum_{k=1}^{\infty} \frac{1}{(\mu^2 + k) \sqrt{k}}
  \leq \left(\frac{1}{1+\mu^2} \sum_{1  \leq k \leq \mu^2}
  \frac{1}{\sqrt{k}} \right) + \sum_{k \geq 1 \vee \mu^2} \frac{1}{k^{3/2}} = O(1/\mu).$$
  Hence, we get:
  $$\sum_{j = 0}^{\ell-1} b_0(j/n_0)
  = O(n_0),$$
  which gives \eqref{boundbb}.
It remains to prove  \eqref{boundosc}. One has: 
  $$\rho'(t) = \frac{ n_0 \mu/4 - i \mu^2 \sqrt{n_0}
  /(8 \sqrt{1 - t})}{ (\mu^2/4 + n_0(1-t))^{3/2}}$$
  and
  $$\rho''(t) =
  \frac{3 n_0^2 \mu/8 - i \left( \mu^2 n_0^{3/2}/4\sqrt{1-t} + \mu^4 \sqrt{n_0}/64(1-t)^{3/2} \right)}{(\mu^2/4
  + n_0(1-t))^{5/2}},$$
  which gives
  $$v_0(t) = \frac{ n_0 \mu/4 - i \mu^2 \sqrt{n_0}
  /(8 \sqrt{1 - t})}{ \sqrt{n_0(1-t)} (\mu^2/4 + n_0(1-t))},$$
  \begin{align*}
   v_0'(t) &  = \frac{\rho''(t)}{\Im(\rho(t))} - \frac{\rho'(t) \Im(\rho'(t))}{\Im^2(\rho(t))}
   \\ & =  \frac{3 n_0^2 \mu/8 - i \left( \mu^2 n_0^{3/2}/4\sqrt{1-t} + \mu^4 \sqrt{n_0}/64(1-t)^{3/2} \right)}{
   \sqrt{n_0(1-t)} (\mu^2/4
  + n_0(1-t))^{2}}
   \\ & - \frac{ (n_0 \mu/4 - i \mu^2 \sqrt{n_0}
  /(8 \sqrt{1 - t})) (-\mu^2 \sqrt{n_0}
  /(8 \sqrt{1 - t})}{(n_0(1-t))(\mu^2/4 + n_0(1-t))^{2}},
  \end{align*}
  and then
  $$|v_0(t)| = O  \left( \frac{1}{1-t} \right),$$
  $$|v_0'(t)| = O  \left( \frac{1}{(1-t)^2} \right).$$
Similarly, one gets, from $|\rho(t)| = 1$ and the estimate $|\rho'(t)| = O(1/(1-t))$:
$$|q(t)| = O(1/(1-t))$$
and $$|q'(t)| = O(1/(1-t)^2).$$

  On the other hand,
  \begin{equation}
\sum_{j=0}^{\ell - 1}
 osc_{1,j} = \sum_{j=0}^{\ell - 1} \Re(e_{1,j} \eta_j) +
\frac{1}{4} \sum_{j=0}^{\ell - 1} \Re(e_{2,j} \eta^2_j) \label{oscillatory2019} 
\end{equation}
 where
 $$e_{1,j} =  (-v_0(t) - i q(t)/2)e^{-i \varphi_{j, \mu, \mu}}$$
 and
 $$e_{2,j} = - i q(t) e^{-2 i \varphi_{j, \mu, \mu}}.$$
 Now, let us define $\tilde{\ell}$ as follows: 
 $$\tilde{\ell} := \ell =  \left( \lceil n_0 - \mu^{2/3} - 1 \rceil \right)_+ $$
if $\mu \leq 1$, and 
$$\tilde{\ell} := \left( \lceil n_0 - \mu^2 - 1 \rceil \right)_+$$
if  $\mu > 1$. In any case, $\tilde{\ell} \leq \ell$. Moreover,
 $$ \left| \sum_{j=0}^{\tilde{\ell} -1} \Re(e_{2,j} \eta^2_j ) \right|
 \leq \sum_{j=0}^{\tilde{\ell} - 1} |e_{2,j}| \leq \sum_{j=0}^{\tilde{\ell} - 1} |q(j/n_0)|
 = O \left( n_0 \sum_{j=0}^{\tilde{\ell}-1} \frac{ |1 + \rho^2 (j/n_0) |}{ n_0 - j} \right).$$
 Now, since $\Re (\rho(j/n_0))$ and $\Im (\rho(j/n_0))$ are nonnegative, we have, by taking the argument
 in $[0, \pi/2]$,
 $$
 0 \leq \frac{\pi}{2} - \operatorname{Arg} (\rho (j/n_0)) \leq \frac{\pi}{2} \Re (\rho(j/n_0)) \leq \frac{\mu}{\sqrt{n_0 - j}},
 $$
 and since $\rho^2 (j/n_0) =  - e^{-2 i (\pi/2 -  \operatorname{Arg}  (\rho (j/n_0)))}$,
 $$|1 + \rho^2 (j/n_0)| = | 1 - e^{-2 i (\pi/2 -  \operatorname{Arg}  (\rho (j/n_0)))}|
 \leq 2 \left| \frac{\pi}{2} -  \operatorname{Arg}  (\rho (j/n_0)) \right| \leq \frac{2 \mu}{\sqrt{n_0 - j}}.$$
 Hence,
 $$ \left| \sum_{j=0}^{\tilde{\ell} - 1} \Re(e_{2,j} \eta^2_j ) \right|
 = O \left( n_0 \sum_{j=0}^{\tilde{\ell}-1} \frac{\mu}{(n_0 - j)^{3/2}} \right)
  = O \left(n_0 \sum_{k = n_0 - \tilde{\ell} + 1}^{\infty}  \frac{\mu}{k^{3/2}} \right)
  = O \left( \frac{n_0 \mu}{ \sqrt{n_0 - \tilde{\ell} + 1}} \right).$$
 Now, if $\tilde{\ell} > 0$ (otherwise the sum in $j$ just above is empty),
 $n_0 - \tilde{\ell} + 1  \geq n_0 - (n_0 - \mu^2) + 1 \geq \mu^2$, and then
 $$  \left( \sum_{j=0}^{\tilde{\ell} - 1} \Re(e_{2,j} \eta^2_j ) \right)^2 = O(n_0^2).$$
Subtracting this bound from  \eqref{oscillatory2019}, we deduce that \eqref{boundosc} is proven if we show the following estimate, 
for $d = 1$ and for $d = 2$:
 \begin{equation}
\mathbb{E} \left[ \left| \sum_{j=\ell'}^{\ell - 1} e_{d,j} \eta^d_j \right|^2 \right]
 = O(n_0^2 ), \label{sumellprime}
 \end{equation}
 where $\ell' := 0$ if $d = 1$, and $\ell' := \tilde{\ell}$ if $d = 2$.
 We know that $\ell' \leq \ell$, and the result is obvious for $\ell'=  \ell$ (the sum is empty), so we can
 assume $\ell' \leq \ell-1$. From the definition of $\tilde{\ell}$, we deduce that for $d = 2$, this assumption
 implies $\mu > 1$ and $\ell' = \tilde{\ell} \geq n_0 - \mu^2 - 1$.

Now, following the proof of Lemma 37 of \cite{VV}, we use
 partial summation, in order to write, for $d \in \{1, 2\}$, if $\ell \geq 1$,
 $$\sum_{j=\ell'}^{\ell - 1} e_{d,j} \eta^d_j
 = F_{d, \ell - 1} e_{d,\ell-1} + \sum_{j=\ell'}^{\ell-2} F_{d,j} (e_{d,j} - e_{d,j+1}),$$
 where
 $$F_{d,j} = \sum_{m = \ell'}^{j}  \eta^d_m,$$
 and where the second sum is empty for $\ell' = \ell-1$.
 Hence,
 $$ \sum_{j=\ell'}^{\ell - 1} e_{d,j} \eta^d_j =
 A_{d} + d i B_{d}$$
 where
 \begin{equation}
A_d
 = F_{d, \ell - 1} e_{d,\ell-1} + \sum_{j=\ell'}^{\ell-2} F_{d,j} (e_{d,j} - e_{d,j+1}
  - d i e_{d,j} \Delta \varphi_j)
\label{Ad}
\end{equation}
  and
  \begin{equation}
B_d =   \sum_{j=\ell'}^{\ell-2} F_{d,j} e_{d,j} \Delta \varphi_j. \label{Bd} 
\end{equation}
  In order to prove \eqref{boundosc}, it is then sufficient to show that
  the expectations of $|A_d|^2$ and $|B_d|^2$ are dominated by $n_0^2 $.

 From the estimates of $v_0$ and $q$ given above, we get $|e_{d,j}| = O(n_0/k)$ where $k = n_0 - j$. Moreover,
 if $$q_d(t) := (-v_0(t) - i q(t)/2) \mathds{1}_{d = 1} - i q(t) \mathds{1}_{d = 2},$$
 we get
 \begin{align*}
& e_{d,j} - e_{d,j+1} - d i e_{d,j} \Delta \varphi_j \\ &
 =  e^{-d i \varphi_{j, \mu, \mu}} [q_d(j/n_0) (1 - e^{-di \Delta \varphi_j }
 - d i \Delta \varphi_j)  +  (q_d(j/n_0) - q_d((j+1)/n_0) ) e^{-di \Delta \varphi_j } ].
 \end{align*}
 and then
 $$ |e_{d,j} - e_{d,j+1} + d i e_{d,j} \Delta \varphi_j|
 \leq  (d^2/2) |q_d(j/n_0)| |\Delta \varphi_j|^2 + \int_{j/n_0}^{(j+1)/n_0} |q_d'(u)| du. $$
Since $j \leq \ell - 2 \leq n_0-2$, we have $2(1 - u) \geq  1 - t= 1 - (j/n_0)$ for all
$u$ on the interval of integration. The previous estimates on $q$, $q'$, $v_0$, $v'_0$
give
$$ |e_{d,j} - e_{d,j+1} + d i e_{d,j} \Delta \varphi_j|
= O\left( |\Delta \varphi_j|^2 / (1-t) + 1/(n_0 (1-t)^2) \right)
 = O\left( |\Delta \varphi_j|^2 n_0/k + n_0/k^2 \right).
$$
By Lemma \ref{VV37}, we have:  
 $$ |F_{d,j}| = O \left( \mu k^{-1/2} + \mu^{-1} (n_0 - \ell')^{1/2} \mathds{1}_{d = 2}  + 1
 \right),$$
where $k = n_0 - j$. 
For $d = 2$, $\ell' = \tilde{\ell}$, and then the modulus of $A_d$ is dominated by
\begin{align}  & \frac{n_0}{n_0 - \ell + 1} \,\left(  1 +  \mathds{1}_{d = 2} \sqrt{n_0 -
\tilde{\ell}}/\mu  + \mu/\sqrt{n_0 - \ell
+ 1}    \right)
 \nonumber \\ & + \sum_{j = \ell'}^{\ell-2} \left( 1+ \mathds{1}_{d = 2}
 \sqrt{n_0 - \tilde{\ell} }/\mu + \mu/\sqrt{k}   \right)
 \left(n_0/k^2 +  |\Delta \varphi_j|^2\, n_0/k  \right). \label{boundAd}
 \end{align}
Now, since we assume $\ell \geq 1$ and then $\ell \leq n_0 - \mu^{2/3}$, we
have $n_0 - \ell + 1  \geq n_0 - (n_0 - \mu^{2/3}) + 1 =  1+ \mu^{2/3}$, and then the first
term is smaller than or equal to
$$ \frac{n_0}{1+\mu^{2/3}} + \mathds{1}_{d = 2}
\frac{n_0 (n_0 - \tilde{\ell})^{1/2}}{\mu^{5/3}} + n_0 \leq 2 n_0 +
\mathds{1}_{d = 2} \frac{n_0 (n_0 - \tilde{\ell})^{1/2}}{\mu^{5/3}}.
$$
Now, for $d = 2$, the assumptions we have made imply $\mu > 1$, and
$n_0 - \tilde{\ell} \leq \mu^2 + 1$: we deduce that the square of the first term of \eqref{boundAd} is
dominated by $n_0^2$.

In \eqref{boundAd}, let us now bound the part on the second term which does not involve $\Delta \varphi_j$.
This sum is smaller than of equal to
\begin{align*}
& \sum_{k=n_0 - \ell + 2}^{n_0}  \left( 1+ \mathds{1}_{d = 2}
\sqrt{n_0 - \tilde{\ell}}/\mu + \mu/\sqrt{k}   \right)  (n_0/k^2)
\\ & \leq \frac{n_0}{n_0 - \ell + 1} + \frac{n_0 \mu}{(n_0 - \ell + 1)^{3/2}} +
\frac{n_0 (n_0 - \tilde{\ell})^{1/2}}{ \mu (n_0 - \ell + 1) } \mathds{1}_{d = 2}
\\ & \leq  2 n_0 + \frac{n_0 (n_0 - \tilde{\ell})^{1/2}}{ \mu (n_0 - \ell + 1) } \mathds{1}_{d = 2}.
\end{align*}
For $d = 2$, we have assumed $\mu > 1$ and $n_0 - \tilde{\ell} \leq 1 + \mu^2$, and then
we deduce a bound of order $n_0^2$ for the square of the part of \eqref{boundAd} which does not involve
$\Delta \varphi_j$.

In order to bound the term involving  $\Delta \varphi_j$,  we use \eqref{moment234} in Proposition \ref{VV22}, which gives 
$$\mathbb{E} [(\Delta \varphi_j)^4] = O(k^{-2}).$$
We deduce that for all $j \in \{\ell', \dots, \ell -2\}$, in the term of the sum in \eqref{boundAd}
which is indexed by $j$, the $L^2$ norm of the part depending on $\Delta \varphi_j$ is dominated by the
part which does not depend on $\Delta \varphi_j$. Hence, the $L^2$ norm of the sum is dominated by its  part
not depending on $\Delta \varphi_j$, and then the expectation of $|A_d|^2$ is dominated by $n_0^2$.

Let us now bound the expectation of $|B_d|^2$. This expectation is dominated by
$$ \mathbb{E} \left[  \left( \sum_{j=\ell'}^{\ell-2} F_{d,j} e_{d,j} \mathbb{E} [\Delta \varphi_j | \mathcal{F}_j]
\right)^2 \right] +
\mathbb{E} \left[  \left( \sum_{j=\ell'}^{\ell-2} F_{d,j} e_{d,j} (M_{j+1} - M_j)
\right)^2 \right],$$
where we recall that
$$M_{r} := \varphi_{r, \mu, \mu} - \sum_{j = 0}^{r-1}
\mathbb{E} [\Delta \varphi_j | \mathcal{F}_{j}].$$
Inside the square in the second term, we have a sum of martingale increments. Hence, the second term is
equal to
$$ \mathbb{E} \left[  \sum_{j=\ell'}^{\ell-2} \left( F_{d,j} e_{d,j} (M_{j+1} - M_j)
\right)^2 \right].$$
Now, the second moment of $M_{j+1} - M_j$ is dominated by $1/k$, $e_{d,j}$ is dominated by $n_0/k$: hence, using
the previous bound on $F_{d,j}$, the expression just above is dominated by
\begin{equation} \sum_{j= \ell'}^{\ell - 2} \frac{n_0^2}{k^3}
\left(\mu^2 k^{-1} + \mu^{-2} (n_0 - \ell') \mathds{1}_{d = 2}  + 1 \right), \label{sumBd1}
\end{equation}
and then by
$$\frac{n_0^2 \mu^2}{ (n_0 - \ell + 1)^3} + \frac{n_0^2}{(n_0 - \ell + 1)^2} + \frac{n_0^2 (n_0 - \ell')
}{(n_0 - \ell + 1)^2 \mu^2} \mathds{1}_{d = 2}.$$
From $n_0 - \ell + 1 \geq 1 + \mu^{2/3}$, we deduce that the two first terms are dominated by $n_0^2$.
If $d = 2$, we have assumed $\mu > 1$ and then $n_0 - \ell' \leq 1+ \mu^2$, which again dominates the
corresponding term by $n_0^2$.

In order to get a satisfactory $L^2$ bound for $B_d$, it then remains to show
$$ \mathbb{E} \left[  \left( \sum_{j=\ell'}^{\ell-2} F_{d,j} e_{d,j} \mathbb{E} [\Delta \varphi_j | \mathcal{F}_j]
\right)^2 \right] = O (n_0^2).$$
Now, from \eqref{formuladeltaphi} in Proposition \ref{VV22}, we have $\mathbb{E} [\Delta \varphi_j | \mathcal{F}_j] = O(1/k)$ (recall that $\Lambda = \mu$ here, and then $\lambda = 0$).
Hence, the sum inside the square is dominated by
\begin{equation} \sum_{j= \ell'}^{\ell - 2} \frac{n_0}{k^2}
\left(\mu k^{-1/2} + \mu^{-1} (n_0 - \ell')^{1/2} \mathds{1}_{d = 2}  + 1 \right), \label{sumBd2}
\end{equation}
and then by
$$ \frac{n_0 \mu}{ (n_0 - \ell + 1)^{3/2}} + \frac{n_0}{n_0 - \ell + 1} + \frac{n_0 (n_0 - \ell')^{1/2}
}{(n_0 - \ell + 1) \mu} \mathds{1}_{d = 2},$$
which is dominated by $n_0$.
Hence
$$ \mathbb{E} \left[  \left( \sum_{j=\ell'}^{\ell-2} F_{d,j} e_{d,j} \mathbb{E} [\Delta \varphi_j | \mathcal{F}_j]
\right)^2 \right] = O (n_0^2),$$
 which completes the proof of \eqref{boundosc}, and then the proof of \eqref{boundphi1}.
 \end{proof}
\subsection{Proof of  \eqref{boundphi2}}
\begin{proof}
  Let us now show the estimate: 
$$\mathbb{E} [(\varphi_{\ell,\Lambda, \mu} -\varphi_{\ell,\mu, \mu} - \lambda )^2 ] = O(\log(2 +|\lambda|)).$$
 If $\ell = 0$, we have $\varphi_{\ell, \Lambda, \mu} - \varphi_{\ell, \mu, \mu} = O(1)$. 
Moreover, $n_0 \leq 1 + \mu^{2/3}$ and then, for  $\mu \leq 10 \sqrt{n}$, $n_0 \leq n^{5/6}$ if $n$ is large enough, which implies $|\lambda| \leq 1$, and for $\mu > 10 \sqrt{n}$, 
$n_0 = 1$ and then $|\lambda| \leq n_0^{1/10} = 1$. Hence, we have \eqref{boundphi2} in this case.

 From now, we can then assume $\ell \geq 1$. 
  We define 
$$\ell^*  = \ell \wedge \lfloor 1 + n_0 \left(1 - (1+|\lambda|)^{-2} \right) \rfloor.$$
which implies that $\ell^* \geq 1$.  Let us denote, for
$0 \leq r \leq \ell - 1$:
$$\Delta \psi_r := [\varphi_{r+1, \Lambda, \mu} - \varphi_{r+1, \mu, \mu}]
- [\varphi_{r, \Lambda, \mu} - \varphi_{r, \mu, \mu}] - \frac{\lambda}{2 \sqrt{(n_0 - r) n_0}}.$$
We can then decompose $\Delta \psi_r $ as the sum of the increments of a martingale, and the increments of a predictable process. 
If for $0 \leq r \leq \ell$, we define:
$$N_{r} := [\varphi_{r, \Lambda, \mu} - \varphi_{r, \mu, \mu}] - \sum_{j = 0}^{r-1}
\mathbb{E} [\Delta \psi_j | \mathcal{F}_{j}] -  \sum_{j=0}^{r-1} \frac{\lambda}{2 \sqrt{(n_0-j) n_0}},$$
then the sequence $(N_{r})_{0 \leq r \leq \ell}$ is a martingale
with respect to the filtration $(\mathcal{F}_{r})_{0 \leq r \leq \ell}$, since
$$N_{r+1} - N_r = \Delta \psi_r -
\mathbb{E} [\Delta \psi_r |\mathcal{F}_r].$$
 One has $N_0 = O(1)$ and for $0 \leq r \leq
\ell -1$,
\begin{align*} \mathbb{E} [(N_{r+1} - N_r)^2]
& = \mathbb{E} [(\Delta \psi_r -
\mathbb{E} [\Delta \psi_r |\mathcal{F}_r])^2]
\leq  \mathbb{E}  \left[\left(\Delta \psi_r +  \frac{\lambda}{2 \sqrt{(n_0 - r) n_0}} \right)^2 \right]
\\ & =
 \mathbb{E} [\left( [\varphi_{r+1, \Lambda, \mu} - \varphi_{r+1, \mu, \mu}]
- [\varphi_{r, \Lambda, \mu} - \varphi_{r, \mu, \mu}] \right)^2].
\end{align*}
From \eqref{moment234} in Proposition \ref{VV22}, which holds uniformly in $|\lambda| \leq n_0^{1/10}$, we deduce: 
\begin{align*} \mathbb{E} [N_{\ell^*}^2] = O(1) + \sum_{ r= 0}^{\ell^*-1}
\mathbb{E} [(N_{r+1} - N_r)^2]
& = O\left(  1 +\sum_{k= n_0 - \ell^*+1}^{n_0} \frac{1}{k} \right)
\\ & = O\left( 1+  \log(n_0) - \log (n_0 - \ell^* + 1)  \right)
\\ & = O \left( 1 + \log(n_0) - \log [n_0/(1+|\lambda|)^2] \right)
= O \left( \log (2 + |\lambda|) \right).
\end{align*}
 In order to prove \eqref{boundphi2}, it is then sufficient to show that
 \begin{equation}
\mathbb{E}\left[\left(\sum_{j = 0}^{\ell^*-1}
\mathbb{E} [\Delta \psi_j | \mathcal{F}_{j}] \right)^2
\right] = O (1) \label{bounddeltapsi}
 \end{equation}
 and
 \begin{equation}
 \mathbb{E} \left[ \left(  (\varphi_{\ell, \Lambda, \mu} - \varphi_{\ell, \mu, \mu}) -
 (\varphi_{\ell^*, \Lambda, \mu} - \varphi_{\ell^*, \mu, \mu}) - \left( \lambda
 -  \sum_{j=0}^{\ell^*-1} \frac{\lambda}{2 \sqrt{(n_0-j) n_0}} \right) \right)^2 \right] = O(1),
 \label{boundremainingterm}
 \end{equation}
since 
\begin{align*}
& \varphi_{\ell,\Lambda, \mu} -\varphi_{\ell,\mu, \mu} - \lambda \\ & =  (\varphi_{\ell, \Lambda, \mu} - \varphi_{\ell, \mu, \mu}) -
 (\varphi_{\ell^*, \Lambda, \mu} - \varphi_{\ell^*, \mu, \mu}) - \left( \lambda
 -  \sum_{j=0}^{\ell^*-1} \frac{\lambda}{2 \sqrt{(n_0-j) n_0}} \right) + N_{\ell^*}  + \sum_{j=0}^{\ell^* - 1} \mathbb{E} [\Delta \psi_j | \mathcal{F}_{j}].
\end{align*}
The sequel of the subsection is devoted to the proof of  \eqref{bounddeltapsi} and  \eqref{boundremainingterm}.

 {\bf  Proof of \eqref{bounddeltapsi}:} This estimate is a consequence of 
 \begin{equation}
\mathbb{E}\left[\left(\sum_{j = 0}^{\ell^*-1}
\mathbb{E} [\Delta \varphi_j | \mathcal{F}_{j}] \right)^2 \label{estimatewithlstar1}
\right] = O (1),
\end{equation}
and
\begin{equation} \mathbb{E}\left[\left(\sum_{j = 0}^{\ell^*-1}
\mathbb{E} \left [\Delta \varphi_{j,\lambda} -  \frac{\lambda}{2 \sqrt{(n_0-j) n_0}}  \big| \mathcal{F}_{j} \right] \right)^2
\right] = O (1), \label{estimatewithlstar2}
\end{equation}
where we recall that $\Delta \varphi_{j, \lambda} =  \varphi_{j+1, \Lambda, \mu} -  \varphi_{j, \Lambda, \mu}$. Indeed, 
$$ \Delta \psi_j  = \Delta \varphi_{j,\lambda} -   \Delta \varphi_{j} -  \frac{\lambda}{2 \sqrt{(n_0-j) n_0}}.$$
The estimate \eqref{estimatewithlstar1} is the same as \eqref{boundsumconditionalexpectation}, except that $\ell$ is replaced by
$\ell^*$, i.e. there are less terms in the sum. One then checks that the proof of \eqref{boundsumconditionalexpectation} still works here if we replace
$\ell, \tilde{\ell}, \ell'$ by their infimums $\ell^*, \tilde{\ell^*}, (\ell^*)'$ with $\ell^*$.

For the second estimate \eqref{estimatewithlstar2}, we use Proposition \ref{VV22}, which gives: 
\begin{equation}
\mathbb{E} \left [\Delta \varphi_{j,\lambda} -  \frac{\lambda}{2 \sqrt{(n_0-j) n_0}}  \big| \mathcal{F}_{j} \right]
= \frac{ b_0(t) }{n_0} + \frac{osc_{1,j}}{n_0} + O(k^{-3/2}). \label{uniformlambdaestimate}
\end{equation}
uniformly in $|\lambda| \leq n_0^{1/10}$.
 We can then prove \eqref{estimatewithlstar2} in a similar way as
 \eqref{boundsumconditionalexpectation}, except that we should take into account the term $\lambda/(2 \sqrt{1-t})$ in the expression of $osc_{1,j}$. 
 Since $v_0(t) = O(1/(1-t))$ and $v'_0(t) = O(1/(1-t)^2)$, we deduce that 
$$ v_{0} (t) - \frac{\lambda}{2\sqrt{1-t}}   = O \left(\frac{1}{1-t} + \frac{|\lambda|}{\sqrt{1-t}} \right)$$ and
$$\frac{d}{dt} \left( v_{0} (t) - \frac{\lambda}{2\sqrt{1-t}} \right) = O \left( \frac{1}{(1-t)^2} 
 + \frac{|\lambda|}{(1-t)^{3/2}} \right).$$
The term depending on $\lambda$ multiplies all the estimates of the sums involving $e_{1,j}$ by
 $ 1 + |\lambda| \sqrt{k/n_0}$ (recall that $k = n_0 -j$ and $t = j/n_0$ in our computation). Hence, the proof of
  \eqref{boundsumconditionalexpectation} works without change if $|\lambda| \leq 1$. If $|\lambda| > 1$, we necessarily
  have $n_0 > n^{5/6}$ by assumption, which implies $n - \mu^2/4 - 1/2 > 1$,  $\mu = O(\sqrt{n})$, and
  then $\mu = O(\sqrt{n_0^{6/5}}) = O(n_0^{3/5})$.
  Moreover, we have $|\lambda| \leq n_0^{1/10}$, which implies that the estimates are at most multiplied by
  $1 + k^{1/2} n_0^{-2/5}$. The proof is then unchanged until quantities similar to $A_d$ and $B_d$ are 
introduced, as in \eqref{Ad} and \eqref{Bd}. The expectation of the squared modulus of these quantities should be dominated by $n_0^2$. 
The modulus of the analog of $A_d$ is bounded by the sum of the analog of  \eqref{boundAd} where $\ell$, $\tilde{\ell}$ and $\ell'$ are replaced by $\ell^*$, $\tilde{\ell}^*$ and $(\ell^*)'$, and an extra term due to the multiplication by   $1 + k^{1/2} n_0^{-2/5}$ in the estimates of terms involving $e_{1,j}$, which is equal to 
   \begin{align}  & \frac{n_0^{3/5}}{(n_0 - \ell^* + 1)^{1/2}} \,\left(  1 +  \mathds{1}_{d = 2} \sqrt{n_0 -
\tilde{\ell^*}}/\mu  + \mu/\sqrt{n_0 - \ell^*
+ 1}    \right)
 \nonumber \\ & + \sum_{j = (\ell^*)'}^{\ell^*-2} \left( 1+ \mathds{1}_{d = 2}
 \sqrt{n_0 - \tilde{\ell^*} }/\mu + \mu/\sqrt{k}   \right)
 \left(n_0^{3/5}/k^{3/2} +  |\Delta \varphi_{j, \lambda}|^2\, n_0^{3/5}/k^{1/2}  \right).
 \end{align}
Here,  we can assume that $(\ell^*)'  \leq \ell^*-1$ (otherwise the sum similar to
\eqref{sumellprime} we have to bound is empty), which implies that $(\ell^*)' = \ell'$, and then
$\tilde{\ell^*} =\tilde{\ell}$ for $d = 2$. We deduce that we can bound the modulus of the analog of $A_d$ by the sum of  \eqref{boundAd}  and 
the extra term 
 \begin{align}  & \frac{n_0^{3/5}}{(n_0 - \ell + 1)^{1/2}} \,\left(  1 +  \mathds{1}_{d = 2} \sqrt{n_0 -
\tilde{\ell}}/\mu  + \mu/\sqrt{n_0 - \ell
+ 1}    \right)
 \nonumber \\ & + \sum_{j = \ell'}^{\ell-2} \left( 1+ \mathds{1}_{d = 2}
 \sqrt{n_0 - \tilde{\ell} }/\mu + \mu/\sqrt{k}   \right)
 \left(n_0^{3/5}/k^{3/2} +  |\Delta \varphi_{j, \lambda}|^2\, n_0^{3/5}/k^{1/2}  \right), \label{boundmodifiedAd}
 \end{align}
where the stars have been removed. 
The term  \eqref{boundAd} has been already suitably estimated, so we can focus on \eqref{boundmodifiedAd}, whose squared modulus should have an expectation dominated by $n_0^2$. 
 Here, we assume $\ell \geq 1$, which implies $\ell \leq n_0 - \mu^{2/3}$,  we
have $n_0 - \ell + 1  \geq n_0 - (n_0 - \mu^{2/3}) + 1 =  1+ \mu^{2/3}$, and then the first
term of  \eqref{boundmodifiedAd} is smaller than or equal to
$$ \frac{n^{3/5}_0}{(1+\mu^{2/3})^{1/2}} + \mathds{1}_{d = 2}
\frac{n_0^{3/5} (n_0 - \tilde{\ell})^{1/2}}{\mu^{4/3}} + n_0^{3/5}\mu^{1/3} \leq n_0^{3/5}(1+ \mu^{1/3}) +
\mathds{1}_{d = 2} \frac{n_0^{3/5} (n_0 - \tilde{\ell})^{1/2}}{\mu^{4/3}}.
$$
Now, for $d = 2$, the assumptions we have made imply $\mu > 1$, and
$n_0 - \tilde{\ell} \leq \mu^2 + 1$, moreover $\mu$ is dominated by $n_0^{3/5}$. We deduce that the first term
of \eqref{boundmodifiedAd}
is dominated by $n_0^{4/5}$, which is more than enough for our purpose.

In \eqref{boundmodifiedAd}, let us now bound the part of the second term which does not involve $\Delta
\varphi_{j,\lambda}$.
This sum is smaller than or equal to
\begin{align*}
& \sum_{k=n_0 - \ell + 2}^{n_0}  \left( 1+ \mathds{1}_{d = 2}
\sqrt{n_0 - \tilde{\ell}}/\mu + \mu/\sqrt{k}   \right)  (n_0^{3/5}/k^{3/2})
\\ & \leq \frac{n_0^{3/5}}{(n_0 - \ell + 1)^{1/2}} + \frac{n_0^{3/5} \mu}{n_0 - \ell + 1} +
\frac{n_0^{3/5} (n_0 - \tilde{\ell})^{1/2}}{ \mu (n_0 - \ell + 1)^{1/2} } \mathds{1}_{d = 2}
\\ & \leq  \frac{n_0^{3/5}}{(1+\mu^{2/3})^{1/2}} + n_0^{3/5} \mu^{1/3} +
\frac{n_0^{3/5} (n_0 - \tilde{\ell})^{1/2}}{ \mu^{4/3} } \mathds{1}_{d = 2}
\end{align*}
For $d = 2$, we have assumed $\mu > 1$ and $n_0 - \tilde{\ell} \leq 1 + \mu^2$, and then
we deduce again a bound of order $n_0^{4/5}$ for the part of \eqref{boundmodifiedAd} which does not involve
$\Delta \varphi_{j, \lambda}$.
Moreover, we have $ \mathbb{E} [(\Delta \varphi_{j, \lambda})^4] =  O(k^{-2})$
by \eqref{moment234} in Proposition \ref{VV22}, which implies that the part of  \eqref{boundmodifiedAd} depending on $\Delta \varphi_{j, \lambda}$ is dominated in
$L^2$ by the part not depending on $\Delta \varphi_{j, \lambda}$.
We have now proven a suitable bound for the analog of $A_d$. 
 
It remains to get a suitable $L^2$ bound for the quantity similar to $B_d$: see \eqref{Bd}. By 
\eqref{moment234} in Proposition \ref{VV22}, we have
$\mathbb{E} [(\Delta \varphi_{j, \lambda})^2] = O(1/k)$. Hence, the proof of the bound of $B_d$ can be adapted to the
present situation. The sums \eqref{sumBd1} and \eqref{sumBd2} should be modified. Since
$e_{d,j}$ is squared in the quantity estimated by \eqref{sumBd1}, the terms should be multipled by
a quantity of order $1 + k n_0^{-4/5}$, which gives an extra sum:
 $$ \sum_{j= \ell'}^{\ell - 2} \frac{n_0^{6/5}}{k^2}
\left(\mu^2 k^{-1} + \mu^{-2} (n_0 - \ell') \mathds{1}_{d = 2}  + 1 \right),$$
This sum is dominated by
$$\frac{n_0^{6/5} \mu^2}{ (n_0 - \ell + 1)^2} + \frac{n_0^{6/5}}{n_0 - \ell + 1} + \frac{n_0^{6/5} (n_0 - \ell')
}{(n_0 - \ell + 1) \mu^2} \mathds{1}_{d = 2}.$$
From $n_0 - \ell + 1 \geq 1 + \mu^{2/3}$, we deduce that the two first terms are dominated by $n_0^{6/5}
(1 + \mu^{2/3}) = O(n_0^{8/5})$.
If $d = 2$, we have assumed $\mu > 1$ and then $n_0 - \ell' \leq 1+ \mu^2$, which dominates the
corresponding term by $n_0^{6/5}$.
The terms in \eqref{sumBd2} should be multiplied by $1 + k^{1/2} n_0^{-2/5}$ because of the estimate of
$e_{d,j}$, but also multiplied a second time by $1 + k^{1/2} n_0^{-2/5}$ because the bound given by 
\eqref{formuladeltaphi} in Proposition \ref{VV22} is 
$O(k^{-1} (1 + k^{1/2}n_0^{-2/5}))$ instead of $O(k^{-1})$. We then get a factor
dominated again by $1 + k n_0^{-4/5}$, which gives an extra term:
$$ \sum_{j= \ell'}^{\ell - 2} \frac{n_0^{1/5}}{k}
\left(\mu k^{-1/2} + \mu^{-1} (n_0 - \ell')^{1/2} \mathds{1}_{d = 2}  + 1 \right),$$
 dominated by
$$ \frac{n_0^{1/5} \mu}{ (n_0 - \ell + 1)^{1/2}} + n_0^{1/5} \log (2 + n_0) + \frac{n_0^{1/5} \log(2+n_0)
(n_0 - \ell')^{1/2}}{\mu} \mathds{1}_{d = 2},$$
and then by $n_0^{3/5}$, which completes the proof of \eqref{bounddeltapsi}.

{\bf Proof of \eqref{boundremainingterm}: }For $a \geq 1$ and $b-a \geq 0$ integer, we have
$$\sum_{k=a}^{b} \frac{1}{\sqrt{k}} = 2 \left( \sqrt{b} - \sqrt{a}\right) + O\left( 1 \right),$$
and then
$$\sqrt{n_0} - \sum_{j=0}^{\ell^* -1} \frac{1}{2 \sqrt{n_0 - j}} =
\sqrt{n_0 - \ell^* + 1} +  O\left( 1 \right).$$
Now, if $\ell^* < \ell$, we get
$$ n_0 - \ell^* + 1 = n_0 - n_0 \left( 1 - \frac{1}{(1 + |\lambda|)^2} \right) + O(1)
= \frac{n_0}{(1 + |\lambda|)^2} + O(1),$$
and then
$$ \sqrt{n_0} - \sum_{j=0}^{\ell^* -1} \frac{1}{2 \sqrt{n_0 - j}}
= \frac{\sqrt{n_0}}{1 + |\lambda|} + O(1).$$
Multiplying by $\lambda/\sqrt{n_0}$ gives
$$ \lambda - \sum_{j=0}^{\ell^* -1} \frac{\lambda}{2 \sqrt{n_0(n_0 - j)}}
 =  \frac{\lambda}{1 + |\lambda|} + O(n_0^{1/10}/\sqrt{n_0}) = O(1).$$
 If $\ell^* = \ell$,
 $$ \sqrt{n_0} - \sum_{j=0}^{\ell^* -1} \frac{1}{2 \sqrt{n_0 - j}}
 = \sqrt{n_0 - \ell + 1} + O(1) = ( \mu^{2/3} \wedge n_0)^{1/2} + O(1),$$
 $$ \lambda - \sum_{j=0}^{\ell^* -1} \frac{\lambda}{2 \sqrt{n_0(n_0 - j)}}
 =  O( 1 + ( \lambda^2 \mu^{2/3} n_0^{-1} \wedge \lambda^2)^{1/2} ).$$
 If $|\lambda| \leq 1$, this gives a bound $O(1)$. Otherwise, we have $n_0 > n^{5/6}$ by
 assumption, and then $\mu= O(\sqrt{n}) = O(n_0^{3/5})$, which gives
 $$\lambda^2 \mu^{2/3} n_0^{-1} = O (n_0^{2/10} n_0^{2/5} n_0^{-1})$$
 and then again a bound $O(1)$.
 Hence,  \eqref{boundremainingterm} is proven if we show
 $$\mathbb{E} \left[ \left(  (\varphi_{\ell, \Lambda, \mu} - \varphi_{\ell, \mu, \mu}) -
 (\varphi_{\ell^*, \Lambda, \mu} - \varphi_{\ell^*, \mu, \mu})  \right)^2 \right] = O(1), $$
 i.e.
 \begin{equation}
  \mathbb{E} [(R_{\ell} - R_{\ell^*})^2 ] = O(1) \label{boundRell}
 \end{equation}
where
 $$R_j := \varphi_{j, \Lambda, \mu} - \varphi_{j, \mu, \mu}.$$
 Of course we can assume that $\ell^* < \ell$.
We now assume $\lambda \geq 0$, the case $\lambda \leq 0$ is similar by changing the suitable signs.
 Using \eqref{formuladeltaphi} in Proposition \ref{VV22}, 
 we get
 $$ \mathbb{E}[R_{j+1} - R_j |\mathcal{F}_j] = \frac{\lambda}{2\sqrt{k n_0}}
 + \frac{1}{n_0}  \widetilde{osc}_{1,j}  + O(k^{-3/2}),$$
 where
 \begin{align*}
\widetilde{osc}_{1,j} & := \Re ((-v_0(t) - i q(t)/2)(e^{-i \varphi_{j, \Lambda, \mu}} - e^{-i \varphi_{j, \mu, \mu}} )
 \eta_j )  \\ & - \frac{1}{4} \Re (i q(t) (e^{-2 i \varphi_{j, \Lambda, \mu}}
 - e^{-2 i \varphi_{j, \mu, \mu}}) \eta^2_j) + \frac{\lambda}{2 \sqrt{1-t}}
 \Re (e^{-i \varphi_{j, \Lambda, \mu}}\eta_j).
 \end{align*}
 (recall that $t = j/n_0$ and then $1 - t = k/n_0$).
 We deduce:
 $$\mathbb{E}[R_{j+1} - R_j |\mathcal{F}_j] = \frac{1}{n_0}  \widetilde{osc}'_{1,j}
 + O \left(k^{-3/2} + \frac{\lambda}{\sqrt{k n_0}} \right),$$
 where
 $$ \widetilde{osc}'_{1,j} := \Re ((-v_0(t) - i q(t)/2)(e^{-i \varphi_{j, \Lambda, \mu}}
 - e^{-i \varphi_{j, \mu, \mu}} )
 \eta_j )  - \frac{1}{4} \Re (i q(t) (e^{-2 i \varphi_{j, \Lambda, \mu}}
 - e^{-2 i \varphi_{j, \mu, \mu}}) \eta^2_j).$$
 Hence, for  $\ell^* \leq \ell_1 <  \ell_2 \leq \ell - 1$,
 $$\mathbb{E}[R_{\ell_2} - R_{\ell_1}|\mathcal{F}_{\ell_1}]
 = O \left( \sum_{j= \ell_1}^{\ell_2-1} k^{-3/2} + \frac{\lambda}{n_0^{1/2}}
 \sum_{j = \ell_1}^{\ell_2-1} k^{-1/2} \right)
 + \sum_{j = \ell_1}^{\ell_2-1} \Re( g_{1,j} \eta_j )
 + \frac{1}{4} \sum_{j = \ell_1}^{\ell_2-1} \Re( g_{2,j} \eta^2_j )
 ,$$
 where
 $$g_{1,j} = \frac{1}{n_0}(-v_0(t) - i q(t)/2) \mathbb{E} [ e^{-i \varphi_{j, \Lambda, \mu}}
 - e^{-i \varphi_{j, \mu, \mu}} | \mathcal{F}_{\ell_1}],$$
 $$g_{2,j} = - \frac{1}{n_0} i q(t) \mathbb{E} [(e^{-2 i \varphi_{j, \Lambda, \mu}}
 - e^{-2 i \varphi_{j, \mu, \mu}}) | \mathcal{F}_{\ell_1}],$$
 for $t = j/n_0$.
 Because of the choice of $\ell^*$, the maximal possible value of $k$ is at most $n_0/(1+ |\lambda|)^2$, which implies
 that the sum in the last $O$ is dominated by $1$, uniformly in $\ell_1$ and $\ell_2$:
 $$ \mathbb{E}[R_{\ell_2} - R_{\ell_1}|\mathcal{F}_{\ell_1}]
 =\sum_{j = \ell_1}^{\ell_2-1} \Re( g_{1,j} \eta_j )
 + \frac{1}{4} \sum_{j = \ell_1}^{\ell_2-1} \Re( g_{2,j} \eta^2_j ) + O(1).$$
 Now, we have the bound:
 $$ |g_{1,j}| \leq \frac{1}{n_0} |v_0(t) + i q(t)/2|  = O(1/k).$$
 Moreover,
 \begin{align*}
 & |g_{1,j+1} - g_{1,j}|
 \\ & \leq
 \frac{1}{n_0}  (|v_0((j+1)/n_0)|+ |q((j+1)/n_0)|) \left| \mathbb{E} [ ( e^{-i\varphi_{j+1, \Lambda, \mu}}
 - e^{-i\varphi_{j, \Lambda, \mu}} ) - ( e^{-i\varphi_{j+1, \mu, \mu}}
 - e^{-i\varphi_{j, \mu, \mu}} )  \, | \mathcal{F}_{\ell_1} ] \right|
 \\ & + \frac{1}{n_0} ( |v_0((j+1)/n_0) - v_0(j/n_0)| +
  |q((j+1)/n_0) - q(j/n_0)| ) | \mathbb{E} [ e^{-i \varphi_{j, \Lambda, \mu}}
 - e^{-i \varphi_{j, \mu, \mu}} | \mathcal{F}_{\ell_1}]|.
 \end{align*}
 We know that
 \begin{align*} & \mathbb{E} [ ( e^{-i\varphi_{j+1, \Lambda, \mu}}
 - e^{-i\varphi_{j, \Lambda, \mu}} ) - ( e^{-i\varphi_{j+1, \mu, \mu}}
 - e^{-i\varphi_{j, \mu, \mu}} ) | \,  \mathcal{F}_{\ell_1} ]
  \\ & =- i  \mathbb{E} [ ( \varphi_{j+1, \Lambda, \mu}
 - \varphi_{j, \Lambda, \mu}) e^{-i\varphi_{j, \Lambda, \mu}} -  (\varphi_{j+1, \mu, \mu}
 - \varphi_{j, \mu, \mu}) e^{-i\varphi_{j, \mu, \mu}}   | \,  \mathcal{F}_{\ell_1} ]
\\ &  + O \left( \mathbb{E} [ ( \varphi_{j+1, \Lambda, \mu}
 - \varphi_{j, \Lambda, \mu})^2 +   (\varphi_{j+1, \mu, \mu}
 - \varphi_{j, \mu, \mu} )^2 | \,  \mathcal{F}_{\ell_1} ]  \right).
 \end{align*}
 Using the estimates of Proposition \ref{VV22},  we deduce 
 $$ \mathbb{E} [ ( e^{-i\varphi_{j+1, \Lambda, \mu}}
 - e^{-i\varphi_{j, \Lambda, \mu}} ) - ( e^{-i\varphi_{j+1, \mu, \mu}}
 - e^{-i\varphi_{j, \mu, \mu}} ) | \,  \mathcal{F}_{\ell_1} ] = O  ( 1/k + \lambda/\sqrt{k n_0}).$$
 From the previous estimates of $v_0$, $q$ and their derivatives, this implies
 $$ |g_{1,j+1} - g_{1,j}| =  O \left( \frac{1}{k^2} + \frac{\lambda}{k^{3/2} n_0^{1/2}}  \right). $$
 Hence, from Lemma \ref{VV37}: 
 $$ \sum_{j = \ell_1}^{\ell_2-1} \Re( g_{1,j} \eta_j )
 = O \left( \frac{1}{n_0 - \ell_2 + 1} +  \frac{\mu}{(n_0 - \ell_2+1)^{3/2}} + \sum_{\ell_1 \leq j \leq \ell_2 - 2}
 (1 + \mu k^{-1/2})(1/k^2 + \lambda/(k^{3/2} n_0^{1/2}) )\right),$$
 and then
 $$  \sum_{j = \ell_1}^{\ell_2-1} \Re( g_{1,j} \eta_j )
 = O \left( \frac{1}{n_0 - \ell_2 + 1} +  \frac{\mu}{(n_0 - \ell_2+1)^{3/2}}
+  \frac{\lambda}{n_0^{1/2} (n_0 - \ell_2 + 1)^{1/2}} +  \frac{\mu \lambda}{n_0^{1/2}(n_0 - \ell_2+1)}
\right).$$
Now, since we assume $\ell \geq 1$, we have $\ell < n_0 - \mu^{2/3}$, and then
$$n_0 - \ell_2 + 1 \geq n_0 - \ell + 1 \geq 1 + \mu^{2/3}, \; n_0 \geq 1 + \mu^{2/3}.$$
The two first terms of the estimate above are bounded, and also the third since we assume
$\lambda \leq n_0^{1/10}$. If $\lambda \leq 1$, we immediately see that the last term is also bounded.
If $\lambda > 1$, we have made the extra assumption $n_0 > n^{5/6} \geq 1$, which implies, from the definition
of $n_0$, that $\mu = O (n^{1/2})$, and then $\mu = O (n_0^{3/5})$, and
$$ \frac{\mu \lambda}{n_0^{1/2}(n_0 - \ell_2+1)}
\leq \frac{\mu \lambda}{n_0^{1/2}\mu^{2/3}} = \frac{\mu^{1/3} \lambda}{n_0^{1/2}}
= O( n_0^{1/5} n_0^{1/10} n_0^{-1/2}) = O(1).$$
Then,
$$  \sum_{j = \ell_1}^{\ell_2-1} \Re( g_{1,j}\eta_j )  = O(1).$$
We have, for $|g_{2,j}|$ and $|g_{2,j+1} - g_{2,j}|$, the same estimates as for
$|g_{1,j}|$ and $|g_{1,j+1} - g_{1,j}|$, proven exactly in the same way.
We have previously defined $\tilde{\ell}$ as $\ell$ if $\mu \leq 1$, and as 
$( \lceil n_0 - \mu^2 - 1 \rceil )_+$ if $\mu > 1$. If we apply Lemma \ref{VV37} to
the sum $\sum_{\ell_1 \vee \tilde{\ell} \leq j \leq \ell_2-1} \Re( g_{2,j} \eta^2_j )$, we obtain
a similar estimate as we obtained
for the same sum with $g_{2,j}$ replaced by $g_{1,j}$. Indeed, if the sum is non-empty, necessarily
$\tilde{\ell} < \ell$, which implies $\mu > 1$ and $\tilde{\ell} \geq n_0 - \mu^2 - 1$, and then
the highest possible value $k_{\max}$ of $k$ involved in the sum is at most $\mu^2 + 1 < 2 \mu^2$,
which implies that in the estimates of Lemma \ref{VV37},   $\mu^{-1} k_{\max}^{1/2} = O(1)$, i.e.
the second estimate of the lemma is dominated by the first one.
Hence,
$$  \sum_{j = \ell_1 \vee \tilde{\ell}}^{\ell_2-1} \Re( g_{2,j} \eta^2_j )  = O(1).$$
Now,
 $$ \left| \sum_{\ell_1 \leq j \leq \ell_2 - 1, j < \tilde{\ell}} \Re(g_{2,j} \eta^2_j ) \right|
 \leq \sum_{j=0}^{\tilde{\ell} - 1} |g_{2,j}| \leq \frac{2}{n_0} \sum_{j=0}^{\tilde{\ell} - 1} |q(j/n_0)|
 = O \left( \sum_{j=0}^{\tilde{\ell}-1} \frac{ |1 + \rho^2 (j/n_0) |}{ n_0 - j} \right).$$
 Now, since $\Re (\rho(j/n_0))$ and $\Im (\rho(j/n_0))$ are nonnegative, we have, by taking the argument
 in $[0, \pi/2]$,
 $$
 0 \leq \frac{\pi}{2} - \operatorname{Arg} (\rho (j/n_0)) \leq \frac{\pi}{2} \Re (\rho(j/n_0)) \leq \frac{\mu}{\sqrt{n_0 - j}},
 $$
 and since $\rho^2 (j/n_0) =  - e^{-2 i (\pi/2 - \operatorname{Arg} (\rho (j/n_0)))}$,
 $$|1 + \rho^2 (j/n_0)| = | 1 - e^{-2 i (\pi/2 - \operatorname{Arg} (\rho (j/n_0)))}|
 \leq 2 \left| \frac{\pi}{2} -\operatorname{Arg} (\rho (j/n_0)) \right| \leq \frac{2 \mu}{\sqrt{n_0 - j}}.$$
 Hence,
 $$ \left| \sum_{\ell_1 \leq j \leq \ell_2 - 1, j < \tilde{\ell}} \Re(g_{2,j} \eta^2_j ) \right|
 = O \left( \sum_{j=0}^{\tilde{\ell}-1} \frac{\mu}{(n_0 - j)^{3/2}} \right)
  = O \left( \sum_{k = n_0 - \tilde{\ell} + 1}^{\infty}  \frac{\mu}{k^{3/2}} \right)
  = O \left( \frac{ \mu}{ \sqrt{n_0 - \tilde{\ell} + 1}} \right).$$
 Now, if $\tilde{\ell} > 0$, $n_0 - \tilde{\ell} + 1  \geq n_0 - (n_0 - \mu^2) + 1 \geq \mu^2$, and then
 $$  \sum_{j=0}^{\tilde{\ell} - 1} \Re(g_{2,j} \eta^2_j)  = O(1).$$
Adding all the estimates we have obtained on the previous sums involving real parts, we deduce
 $$ \mathbb{E}[R_{\ell_2} - R_{\ell_1}|\mathcal{F}_{\ell_1}]  =  O(1),$$
 as soon as $\ell^* \leq \ell_1 < \ell_2 \leq \ell - 1$. Since the phases $\varphi$ vary by $O(1)$ at each step, we
 can relax the assmption to $\ell^* \leq \ell_1 \leq \ell_2 \leq \ell$.
By Proposition 19 of \cite{VV}, the integer part of
 $R_j/2 \pi = (\varphi_{j, \Lambda, \mu} - \varphi_{j, \mu, \mu})/2\pi$ (which corresponds to $\alpha_{j, \lambda}/2 \pi$ with the notation of \cite{VV})  is nondecreasing in $j$, which implies
 that $R_{j_2} \geq R_{j_1} - 2\pi$ as soon as $j_2 \geq j_1$.
 Let $A$ be a strictly positive integer. For
 $\ell^* \leq \ell_1 < \ell$,
 $$\mathbb{P} [\sup_{\ell_1 < j \leq \ell} (R_j - R_{\ell_1}) \geq 2 A \pi \, | \mathcal{F}_{\ell_1}]
 \leq \mathbb{P} [ R_{\ell} - R_{\ell_1} \geq  2 (A-1) \pi \, | \mathcal{F}_{\ell_1}]
 = \mathbb{P} [ R_{\ell} - R_{\ell_1} + 2 \pi  \geq  2 A \pi \, | \mathcal{F}_{\ell_1}].$$
 The variable $R_{\ell} - R_{\ell_1} + 2 \pi$ is non-negative, so by Markov inequality,
 $$ \mathbb{P} [\sup_{\ell_1 < j \leq \ell} (R_j - R_{\ell_1}) \geq 2 A \pi \, | \mathcal{F}_{\ell_1}]
 \leq \frac{1}{2 A \pi}
 \mathbb{E} [ R_{\ell} - R_{\ell_1} + 2 \pi   \, | \mathcal{F}_{\ell_1}],$$
 which is $O(1/A)$ by the boundedness of  $ \mathbb{E}[R_{\ell_2} - R_{\ell_1}|\mathcal{F}_{\ell_1}]$.
 Hence, one can find $A = O(1)$ such that
 $$ \mathbb{P} [\sup_{\ell_1 < j \leq \ell} (R_j - R_{\ell_1}) \geq 2 A \pi \, | \mathcal{F}_{\ell_1}]
 \leq \frac{1}{2}.$$
 Now, let $(T_k)_{k \geq 1}$ be the increasing sequence of indices defined as follows: $T_0 = \ell^*$, and
 for all $k \geq 1$,
 $$T_k = \inf \{j, T_{k-1} < j \leq \ell, R_j \geq R_{T_{k-1}} + 2 A \pi\}.$$
 For $\ell^* \leq \ell_1 \leq \ell$, $k \geq 1$,
 $$ \mathbb{P} [T_{k} < \infty | T_{k-1} = \ell_1]
 = \mathbb{P} [\sup_{\ell_1 < j \leq \ell} (R_j - R_{\ell_1}) \geq 2 A \pi \, | T_{k-1} = \ell_1]
 \leq \frac{1}{2}$$
 since the event $\{T_{k-1} = \ell_1\}$ is $\mathcal{F}_{\ell_1}$-measurable ($T_{k-1}$ is a stopping time).
 Hence,
 $$  \mathbb{P} [T_{k} < \infty | T_{k-1} < \infty] \leq \frac{1}{2},$$
 which, by induction, implies that $\mathbb{P} [T_{k} < \infty] \leq 2^{-k}$.
 On the other hand, let us observe that each increment of
 $ R_j$ is the sum of two angular
 shifts, and then it is at most $4 \pi$. If $k \geq 1$ and $T_{k} < \infty$, we have
 $R_{T_{k} - 1} \leq R_{T_{k-1}} + 2A \pi$ by the minimal property of $T_k$, and then
 $$ R_{T_{k}} \leq R_{T_{k-1}} + 2(A+2) \pi,$$
 which by induction, implies
 $$ R_{T_k} - R_{\ell^*} \leq 2 k (A+2) \pi.$$
 Now, if $K$ is the first index such that $T_K = \infty$, we have $T_{K-1} \leq \ell < T_{K}$, and then
 $$R_{\ell} - R_{\ell^*} \leq R_{T_{K-1}} -  R_{\ell^*} + 2 A \pi \leq 2 (K-1) (A+2) \pi + 2 A \pi \leq 2 K (A+2) \pi.$$
 We deduce
 \begin{align*} \mathbb{E} [ (R_{\ell} - R_{\ell^*} )^2]
 \leq 4  \pi^2 (A+2)^2 \mathbb{E}[K^2]
& = 4 \pi^2 (A+2)^2  \sum_{k=1}^{\infty} (2k-1) \mathbb{P} [K \geq k].
\\ & = 4 \pi^2 (A+2)^2  \sum_{k=1}^{\infty} (2k-1) \mathbb{P} [T_{k-1} < \infty].
\\ & = 4 \pi^2 (A+2)^2 \sum_{k=1}^{\infty} (2k-1) 2^{1-k} = O(1),
\end{align*}
since $A = O(1)$ and the series in $k$ is convergent.
This proves \eqref{boundRell}, and then finishes the proof of \eqref{boundremainingterm}, and finally the proof of \eqref{boundphi2}.
\end{proof} 
\subsection{Proof of \eqref{boundtphh}}
\begin{proof} 
In this subsection, we prove the estimate: 
$$\mathbb{E} [(\tphh_{\ell, \mu} + 2 \pi (n-\ell))^2] = O(1).$$
For this purpose, let us consider the eigenvalue equations corresponding to the bottom-right 
$(n - \ell) \times (n- \ell)$ minor of the matrix $\widetilde{M}$, denoted $\widetilde{M}'$. 
These equations are the $n-\ell$ last equations corresponding to the eigenvalues of 
$\widetilde{M}$, restricted to vectors of $\mathbb{C}^n$ whose $\ell$ first components are equal to zero. 
We deduce that $\Lambda$ is an eigenvalue of $\widetilde{M}'$ if and only if 
$$\infty. \mathbf{R}_{\ell, \Lambda} .  \mathbf{R}_{\ell+1, \Lambda} \dots  \mathbf{R}_{n-1, \Lambda} = 0,$$
i.e. 
$$ 0.  \mathbf{R}^{-1}_{n-1, \Lambda} \dots  \mathbf{R}^{-1}_{\ell, \Lambda} = \infty,$$
or 
$$  \tphh_{\ell, \Lambda} = 0 \lstar \mathbf{R}^{-1}_{n-1, \Lambda} \dots  \mathbf{R}^{-1}_{\ell, \Lambda} \in \pi + 2 \pi \mathbb{Z}.$$
Now,  $\tphh_{\ell, \Lambda}$, which tends to $0$ at $-\infty$,  is continuous and strictly decreasing in $\Lambda$. Hence, the number of eigenvalues of 
$\widetilde{M}'$ which are smaller than or equal to $\mu$ is equal to the number of odd multiples of $\pi$ in the interval $[\tphh_{\ell, \Lambda}, 0)$.
We deduce that 
$$N' (-\infty, \mu) = - \frac{ \tphh_{\ell, \Lambda}}{ 2 \pi} + O(1),$$
where $N'(a,b)$ denotes the number of eigenvalues of $\widetilde{M}'$ in the interval $(a,b)$. 
Since $\widetilde{M}'$ has $n-\ell$ eigenvalues, we deduce 
$$N'(\mu, \infty) = n- \ell + \frac{ \tphh_{\ell, \Lambda}}{ 2 \pi} + O(1).$$
Hence,  \eqref{boundtphh} is proven if we show that 
$$\mathbb{E} [ ( N'(\mu, \infty)  )^2 ] = O(1).$$
 Moreover, for $n' := n- \ell$,  we have 
$$n' \leq  n - n_0 + 1 + |\mu|^{2/3} \leq n - \left(n - \frac{\mu^2}{4} - \frac{1}{2}  \right) +
1 + |\mu|^{2/3} =  \frac{\mu^2}{4} + \frac{3}{2} + |\mu|^{2/3}.$$
If we assume $\mu \leq 2 \sqrt{ n'}$, we deduce
$$ n' \leq  \frac{\mu^2}{4} + \frac{3}{2} + (4n')^{1/3},$$
and then
$$ \mu \geq  2 \sqrt{ \left(n' -  \frac{3}{2} - (4n')^{1/3} \right)_+},$$
which gives
$$\mu \geq 2 \sqrt{n'} - O((n')^{-1/6}).$$
Hence, \eqref{boundtphh} is proven if we show that for all fixed $A > 0$, 
$$\mathbb{E} [ ( N'( 2 \sqrt{n'} - A (n')^{-1/6} , \infty)  )^2 ] = O(1).$$
Now, looking carefully at the entries of $\widetilde{M}'$, we see that 
this matrix has the same law as $\widetilde{M}$ after replacing $n$ by $n' = n- \ell$.
Hence, replacing $n'$ by $n$ in the reasoning, we see that it is enough to show that 
\begin{equation}
\mathbb{E} [ ( N_n( 2 \sqrt{n} - A n^{-1/6} , \infty)  )^2 ] = O(1), \label{xxyz}
\end{equation} 
independently of $n$, where $N_n(a,b)$ is the number of eigevalues of $\widetilde{M}$ in the interval $(a,b)$, or equivalently, the number of eigenvalues of $M$ in the same interval. 
 
 We first observe that this bound is satisfied if we take the limit when $\beta$ tends to infinity. Indeed, for $\beta \rightarrow \infty$,  $M$ tends in probability to the deterministic real symmetric matrix 
$H$  such that $H_{p,p+1} = H_{p+1,p} =  \sqrt{n-p}$ for $1 \leq p \leq n-1$, and $H_{p,q} = 0$ for $|p - q| \neq 1$. 
Now, the eigenvalues of $H$ are the zeros of the Hermite polynomial of degree $n$, and classical estimates   show that the number of zeros which are larger than or equal to 
$2 \sqrt{n} - A n^{-1/6} $ is uniformly bounded when $A$ is fixed.  Moreover, it grows at most like $(1 + A)^{3/2}$ when $A > 0$ varies. 
Let us now prove that \eqref{xxyz} is a consequence of the following fact: there exists a random variable $C > 0$, whose $L^3$ norm is finite and bounded independently of $n$ (but not independently of $\beta$), such that 
\begin{equation}
 (2 \sqrt n I_n - M) \geq (1/2) ( 2 \sqrt n I_n - H) - C n^{-1/6} I_n,  \label{xxyzt} 
\end{equation}
the inequality meaning that the difference between the two sides is a positive real symmetric matrix. 
Indeed, the number of eigenvalues of $M$ which are larger than $ 2 \sqrt{n} - A n^{-1/6}$ is equal to the number of eigenvalues of the $ 2 \sqrt n I_n - M$ which are 
smaller than  $A n^{-1/6}$, and then it is at most the number of eigenvalues of $ (1/2) ( 2 \sqrt n I_n - H) - C n^{-1/6} I_n$ which are 
smaller than $A n^{-1/6}$, i.e. the number of eigenvalues of $H$ which are larger than $2 \sqrt{n}  - 2 (A + C) n^{-1/6}$. Hence, 
$$N_n( 2 \sqrt{n} - A n^{-1/6} , \infty) = O ((1  + A + C)^{3/2}), $$
which proves \eqref{xxyz} for fixed $A$, since $C$ is bounded in $L^3$.  
It is now enough to show \eqref{xxyzt}. 
This inequality can be rewritten as
$$ E \leq \sqrt{n} I_n - (H/2) +  C n^{-1/6} I_n,$$
where $E := M-H$, i.e. for all reals $(x_p)_{1 \leq p \leq n}$,
\begin{align*}
 & \sum_{1 \leq p \leq n} g_p x_p^2  + 2 \sum_{1 \leq p \leq n-1} h_p x_p x_{p+1}
 \leq \sqrt{n} \sum_{1 \leq p \leq n}  x_p^2 -  \sum_{1 \leq p \leq n-1} \sqrt{n-p} (x_p x_{p+1} )
+ C n^{-1/6}  \sum_{1 \leq p \leq n}  x_p^2
\\ & = \frac{1}{2}  \sum_{0 \leq p \leq n} \sqrt{n-p} (x_p - x_{p+1} )^2
+ \frac{1}{2} \sum_{1 \leq p \leq n} (2\sqrt{n} - \sqrt{n-p+1} - \sqrt{n-p})   x_p^2 + C n^{-1/6}  \sum_{1 \leq p \leq n}  x_p^2,
\end{align*}
where $x_0 = x_{n+1} = 0$ by convention, $g_p$ are i.i.d. Gaussian variables of variance $2/\beta$, and $h_p$ are independent variables, $h_p$ having the law of $(\chi_{\beta (n-p)}/\sqrt{\beta}) - (\sqrt{n-p})$.
Since
$$2\sqrt{n} - \sqrt{n-p+1} - \sqrt{n-p} \geq \sqrt{n} - \sqrt{n-p} = \frac{p} { \sqrt{n} +\sqrt{n-p} } \geq \frac{p}{2 \sqrt{n}},$$
  \eqref{xxyzt} is implied by the following estimates:
\begin{equation} \label{boundH1}
\sum_{1 \leq p \leq n} g_p x_p^2  \leq
 \frac{1}{4}  \sum_{0 \leq p \leq n} \sqrt{n-p} (x_p - x_{p+1} )^2
+ \frac{1}{8}  \sum_{1 \leq p \leq n} (p/\sqrt{n})  x_p^2
+  C_1 n^{-1/6}  \sum_{1 \leq p \leq n}  x_p^2
\end{equation}
and
\begin{equation} \label{boundH2}
2 \sum_{1 \leq p \leq n-1} h_p x_p x_{p+1} \leq
 \frac{1}{4}  \sum_{0 \leq p \leq n} \sqrt{n-p} (x_p - x_{p+1} )^2
+ \frac{1}{8}  \sum_{1 \leq p \leq n} (p/\sqrt{n}) x_p^2
+  C_2 n^{-1/6}  \sum_{1 \leq p \leq n}  x_p^2,
\end{equation}
for some $C_1, C_2$ bounded in $L^3$.
It is then enough to prove  \eqref{boundH1} and  \eqref{boundH2}.

\textbf{Proof of \eqref{boundH1}:} We decompose $g_p$ as $\bar{g}_p + \tilde{g}_p$, where
$$ \bar{g}_p := \frac{1}{m(n)}  \sum_{r = 1}^{m(n)} g_{p+r}, \;  \tilde{g}_p = g_p -  \bar{g}_p,$$
with $m(n) = \lfloor n^{1/3} \rfloor$, and by convention, $g_p = 0$ for $p > n$.
Let us now define for $k, \ell \geq 0$,
$$b_k = \sum_{p=1}^k g_p,$$
$$s_{\ell} := \sup_{\ell m(n) \leq k \leq (\ell + 1) m(n)}  |b_k - b_{\ell m(n)}|.$$
We have, for $\ell m(n) \leq p\leq (\ell + 1) m(n)$,
$$| \bar{g}_p| = \frac{| b_{p+m(n)} - b_p|}{m(n)} \leq  \frac{| b_{p+m(n)} -  b_{(\ell +1) m(n)} | +
|   b_{(\ell +1) m(n)}- b_{\ell m(n)}| + | b_{\ell m(n)} - b_p|}{m(n)},
$$
and then
$$| \bar{g}_p|  \leq \frac{1}{m(n)} ( 2 s_{\ell} + s_{\ell + 1}),$$
which implies
\begin{align*}
\sum_{ 1 \leq p \leq n}   \bar{g}_p x_p^2 &  \leq  \frac{1}{m(n)}
 \sum_{ 1 \leq p \leq n}   (2 s_{\lfloor p/m(n) \rfloor} + s_{1 +\lfloor p/m(n) \rfloor} )  x_p^2
\\ & \leq  \sum_{ 1 \leq p \leq n}  \left( \frac{p}{16\sqrt{n}} + C_3 n^{-1/6} \right) x_p^2,
\end{align*}
where
$$C_3  := n^{1/6}  \sup_{0 \leq \ell \leq n/m(n)} \left( \frac{  2 s_{\ell} + s_{\ell + 1}}{m(n)} - \frac{\ell m(n)}{16\sqrt{n}} \right)_+.$$
By using Doob's inequality and the fact that the variables $g_p$ are i.i.d., centered and Gaussian, we get for $q > 1$,
$$\mathbb{E} [ |s_{\ell}|^q] \ll_q \mathbb{E} [|b_{m(n)}|^q] \ll_{\beta,q} (m(n))^{q/2},$$
and then for all $ A > 1$,
\begin{align*}
 \left[  \frac{2 s_{\ell} + s_{\ell + 1}}{m(n)} - \frac{\ell m(n)}{16\sqrt{n}}
> A n^{-1/6} \right]
& \leq \left( A n^{-1/6} m(n) + \frac{\ell (m(n))^2}{16 \sqrt{n}}  \right)^{-q}  \mathbb{E} [
|2 s_{\ell} + s_{\ell + 1} |^{q} ]
\\ & \ll_{\beta,q} ( n^{1/6}(A + \ell))^{-q} n^{q/6}  = (A+\ell)^{-q}
\end{align*}
and
$$\mathbb{P} [C_3 \geq A]
\ll_{\beta,q} \sum_{\ell = 0}^{\infty}
(A+ \ell)^{-q} \leq
\int_{A-1}^{\infty} x^{-q}
\ll_{q} (A-1)^{1-q}.$$
Taking $q$ large enough, we deduce that $C_3$ is bounded in $L^3$. In order to prove \eqref{boundH1}, it  is then sufficient to check
$$
\sum_{1 \leq p \leq n} \tilde{g}_p x_p^2  \leq
 \frac{1}{4}  \sum_{0 \leq p \leq n} \sqrt{n-p} (x_p - x_{p+1} )^2
+ \frac{1}{16}  \sum_{1 \leq p \leq n} (p/\sqrt{n})  x_p^2
+  C_4 n^{-1/6}  \sum_{1 \leq p \leq n}  x_p^2 $$
for some $C_4$ bounded in $L^3$. Summation by parts gives
$$\sum_{1 \leq p \leq n} \tilde{g}_p x_p^2
=\sum_{1 \leq p \leq n} (\tilde{b}_p -
\tilde{b}_{p-1})  x_p^2
= \sum_{1 \leq p \leq n} \tilde{b}_p (x_p^2 - x_{p+1}^2),$$
where
$$\tilde{b}_p := \sum_{k = 1}^p \tilde{g}_k.$$
Now, for $p \leq n/2$,
\begin{align*}\tilde{b}_p (x_p^2 - x_{p+1}^2)
&=  (1/2)(n-p)^{1/4} (x_p - x_{p+1}) \tilde{b}_p (2(n-p)^{-1/4}) (x_p + x_{p+1})
\\ & \leq \frac{1}{8} (n-p)^{1/2} (x_p - x_{p+1})^2
+ 2 \tilde{b}^2_p (n-p)^{-1/2} (x_p + x_{p+1})^2
\\ & \leq \frac{1}{8} \sqrt{n-p} (x_p - x_{p+1})^2
+ (\sqrt{32/n} )\tilde{b}^2_p  (x_p^2 + x_{p+1}^2),
\end{align*}
whereas, for $p > n/2$,
$$ \tilde{b}_p (x_p^2 - x_{p+1}^2)
\leq |\tilde{b}_p| (x_p^2 + x_{p+1}^2).$$
Hence,
$$\sum_{1 \leq p \leq n} \tilde{g}_p
x_p^2 \leq \frac{1}{8} \sum_{1 \leq p \leq n}
\sqrt{n-p} (x_p - x_{p+1})^2
+ \sum_{1 \leq p \leq n}
\left( (\sqrt{32/n} )(\tilde{b}_p^2
+ \tilde{b}_{p-1}^2) + (|\tilde{b}_p|
+ |\tilde{b}_{p-1}|) \mathds{1}_{p>n/2}
\right) x_p^2.$$
We deduce that \eqref{boundH1} is proven if we check that for $1 \leq p \leq n$,
$$(\sqrt{32/n} )(\tilde{b}_p^2
+ \tilde{b}_{p-1}^2) + (|\tilde{b}_p|
+ |\tilde{b}_{p-1}|) \mathds{1}_{p>n/2}
\leq p/(16 \sqrt{n}) + C_4 n^{-1/6}$$
for $C_4$ bounded in $L^3$.
Now,
$$\tilde{b}_k = \sum_{p=1}^k
\left( g_p -  \frac{1}{m(n)}
\sum_{r = 1}^{m(n)} g_{p+r} \right)
= b_k - \frac{1}{m(n)}
\sum_{r = 1}^{m(n)} (b_{k+r} - b_r)
= \frac{1}{m(n)}\sum_{r = 1}^{m(n)}
(b_{k+r} - b_k - b_r).
$$
We then get, for $\ell m(n) \leq k \leq
(\ell +1) m(n)$,
$$ |\tilde{b}_k| \leq s_0 + 2s_{\ell} +
s_{\ell + 1}\leq 2 (s_0 + s_{\ell} +
s_{\ell + 1} )$$
and then for $\ell m(n) + 1 \leq k \leq
(\ell +1) m(n)$,
$$ |\tilde{b}_k| + |\tilde{b}_{k-1}|
\leq 4(s_0 + s_{\ell} +
s_{\ell + 1}), \; \tilde{b}_k^2 + \tilde{b}_{k-1}^2
\leq 48 (s_0^2 + s_{\ell}^2 + s_{\ell + 1}^2).$$
It is then enough to show, for $0 \leq \ell
\leq n/m(n)$,
$$48 \sqrt{32/n} (s_0^2 + s_{\ell}^2 +
s_{\ell+1}^2) +
4 (s_0 + s_{\ell} + s_{\ell+1} ) \mathds{1}_{\ell > (n/2m(n)) - 1}
\leq \frac{\ell m(n)}{16\sqrt{n}} + C_4 n^{-1/6},$$
which is guaranteed if we check
$$s_{\ell} \leq \frac{1}{1000}( \sqrt{\ell} + C_5) n^{1/6},$$
and
$$s_{\ell} \leq \frac{\sqrt{n}}{1000} +
C_6 n^{-1/6},$$
for $C_5$ and $C_6$ bounded in $L^6$.
Now, for $A > 1$, $q > 1$, we deduce from the previous estimate of the moments of $s_{\ell}$,
$$\mathbb{P} \left[s_{\ell} > \frac{1}{1000}( \sqrt{\ell} + A) n^{1/6}
\right] \ll_{\beta,q}
(\sqrt{\ell} + A)^{-q},
$$
and then for $q > 2$,
\begin{align*}
\mathbb{P} \left[ \sup_{0 \leq \ell \leq n/m(n)}  s_{\ell} \right. & \left.  > \frac{1}{1000}( \sqrt{\ell} + A) n^{1/6} \right]
 \ll_{\beta,q} \sum_{\ell \geq 0}
(\sqrt{\ell} + A)^{-q}
\leq A^{-q} + \int_{0}^{\infty} (\sqrt{x}
+ A)^{-q} dx
\\ & \leq A^{-q} + 2\int_{0}^{\infty}
y (y+A)^{-q} dy
\ll A^{-q} + \int_{A}^{\infty} t^{1-q}
dt \ll_q A^{2-q},
\end{align*}
and
\begin{align*}
\mathbb{P} \left[ \sup_{0 \leq \ell \leq n/m(n)}  s_{\ell} \right. & \left.   > \frac{\sqrt{n}}{1000} + A n^{-1/6} \right]
\ll_{\beta,q} (1 + (n/m(n)))
\left(\frac{\sqrt{n}}{1000} + A n^{-1/6}
\right)^{-q} n^{q/6}
\\ & \ll_q n^{2/3} \left( n^{1/3} +
A n^{-1/3} \right)^{-q}
\leq n^{2/3}( n^{1/3})^{-q/2}  \left( n^{1/3} +
A n^{-1/3} \right)^{-q/2}
\\ & \leq n^{2/3 - (q/6)}\left( \sqrt{n^{1/3}(A n^{-1/3}) }  \right)^{-q/2}
\leq A^{-q/4}
 \end{align*}
if $q \geq 4$. This completes the proof of
\eqref{boundH1}.

\textbf{Proof of \eqref{boundH2}:} We use, with obvious notation,
the similar decomposition $h_p =
\bar{h}_p + \tilde{h}_p$, and we denote
$$b'_k :=
\sum_{p=1}^k h_p,
\; s'_{\ell} :=
\sup_{\ell m(n) \leq k \leq (\ell +1) m(n)} |b'_k - b'_{\ell m(n)}|$$
with the convention $h_p = 0$ for $p > n-1$. We get as above
\begin{align*}\sum_{1 \leq p \leq n-1}
2 \bar{h}_p x_p x_{p+1}
\leq \sum_{1 \leq p \leq n-1}
|\bar{h}_p|( x_p^2 +  x_{p+1}^2)
& \leq
\sum_{1 \leq p \leq n-1}
\left( \frac{p}{32 \sqrt{n}} + C_7
n^{-1/6}  \right) (x_p^2 + x_{p+1}^2)
\\ & \leq \sum_{1 \leq p \leq n}
\left( \frac{p}{16 \sqrt{n}} + 2 C_7
n^{-1/6}  \right) x_p^2,
\end{align*}
where
$$C_7 := n^{1/6}
\underset{0 \leq \ell \leq n/m(n)}{\sup}
\left( \frac{2 s'_{\ell} + s'_{\ell +1}}{m(n)} - \frac{\ell m(n)}{32 \sqrt{n}}
\right)_+.$$
Since $b'_{\ell}$ is given by a sum of integrable, independent random variables,
$(b'_{p} - \mathbb{E}[b'_{p}])_{p \geq 0}$ is a martingale. On the other hand,
\begin{align*}s'_{\ell}  \leq
\sup_{\ell m(n) \leq k \leq (\ell + 1)
m(n)} & |b'_k - \mathbb{E}[b'_k]
- b'_{\ell m(n)} + \mathbb{E}[b'_{\ell m(n)}]| +
\sup_{\ell m(n) \leq k \leq (\ell + 1)
m(n)} |\mathbb{E}[b'_k]  - \mathbb{E}[
 b'_{\ell m(n)}]|
\\ & \leq \sup_{\ell m(n) \leq k \leq (\ell + 1)
m(n)} |b'_k - \mathbb{E}[b'_k]
- b'_{\ell m(n)} + \mathbb{E}[b'_{\ell m(n)}]| + \delta_{\ell, n},
\end{align*}
where
 $$\delta_{\ell, n}
 = \sum_{\ell m(n) < p \leq (\ell + 1) m(n)} |\mathbb{E}[h_p]|$$
 By using Doob's inequality, we deduce, for $q > 1$,
 \begin{align*}
 \mathbb{E}[|s'_{\ell}|^q]
 \ll_{q}
 \mathbb{E} [|b'_{(\ell + 1)m(n)} & -
 \mathbb{E} [b'_{(\ell + 1)m(n)}]
 - b'_{\ell m(n)} + \mathbb{E}
[ b'_{\ell m(n)} ]|^q] + \delta_{\ell,n}^q
\end{align*}
 Now, the expectation of a $\chi$ variable satisfies, by log-convexity of the Gamma function:
 $$ \sqrt{u} - \frac{1}{\sqrt{u}} \leq \sqrt{(u-1)_+} \leq \mathbb{E}[\chi_u] = \sqrt{2}
 \frac{\Gamma ((u+1)/2)}{\Gamma(u/2)}
 \leq \sqrt{u}.$$
and then $|\mathbb{E}[h_p]| \ll_{\beta}
(n-p)^{-1/2}$ for $1 \leq p \leq n-1$.
We deduce that $\delta_{\ell,n}$ is dominated by the sum of $m(n)$ consecutive inverse square roots of integers, and then
$$\delta_{\ell,n}^q \ll_{q,\beta}
\left(\sum_{j=1}^{m(n)} j^{-1/2}  \right)^q
\ll_q  (m(n))^{q/2}.$$
 On the other hand, by Rosenthal's inequality, for $q > 2$,
 \begin{align*}
 & \mathbb{E} [|b'_{(\ell + 1)m(n)}  -
 \mathbb{E} [b'_{(\ell + 1)m(n)}]
 - b'_{\ell m(n)} + \mathbb{E}
[ b'_{\ell m(n)} ]|^q]
\\ &  \ll_q
\sum_{\ell m(n) < p \leq (\ell+1) m(n)}
\mathbb{E}[ |h_p - \mathbb{E}[h_p]|^q]
+\left( \sum_{\ell m(n) < p \leq (\ell+1) m(n)}
\mathbb{E}[ \operatorname{Var}(h_p)] \right)^{q/2}.
\end{align*}
Now, for all $u > 0$,
\begin{align*}\mathbb{E}[|\chi_u - \mathbb{E}[\chi_u]|^q]
& \ll_q\mathbb{E}[|\chi_u - \sqrt{u}|^q]
+ |\sqrt{u} - \mathbb{E}[\chi_u]|^q
\\ & \ll_q \mathbb{E}[|\chi_u^2 - u|^q
|\chi_u + \sqrt{u}|^{-q}] +
|\sqrt{u} - \sqrt{(u-1)_+}|^q
\\ & \ll_q u^{-q/2} \mathbb{E}[|\chi_u^2 - u|^q] + 1
\end{align*}
Now, $\chi_u^2 -  u$ can be written as sum of $\lceil u \rceil$ independent random variables of the form $\chi_v^2 - v$ where $0 \leq v \leq 1$. These variables are centered, with uniformly bounded moments of order $r$ for fixed $r \geq 2$, which implies, again by using Rosenthal's inequality:
$$\mathbb{E} [|\chi_u^2 - u|^q]
\ll_q  \lceil u \rceil + (\lceil u \rceil)^{q/2} \ll (1+u)^{q/2},$$
then
$$\mathbb{E}[|\chi_u - \mathbb{E}[\chi_u]|^q]  \ll_q ( 1+ u^{-1})^{q/2}, \; \mathbb{E} [|h_p - \mathbb{E}[h_p]|^q]
\ll_{q, \beta} 1,$$
$$\mathbb{E} [|b'_{(\ell + 1)m(n)}  -
 \mathbb{E} [b'_{(\ell + 1)m(n)}]
 - b'_{\ell m(n)} + \mathbb{E}
[ b'_{\ell m(n)} ]|^q] \ll_{q, \beta}
(m(n))^{q/2},$$
and finally
$$\mathbb{E}[|s'_{\ell}|^q] \ll_q (m(n))^{q/2}.$$
With this estimate, we deduce that $C_7$ is bounded in $L^3$, similarly as in the proof of \eqref{boundH1}.
It is now sufficient to check
$$
\sum_{1 \leq p \leq n-1} 2 \tilde{h}_p x_p
x_{p+1} \leq
 \frac{1}{4}  \sum_{0 \leq p \leq n} \sqrt{n-p} (x_p - x_{p+1} )^2
+ \frac{1}{16}  \sum_{1 \leq p \leq n} (p/\sqrt{n})  x_p^2
+  C_8 n^{-1/6}  \sum_{1 \leq p \leq n}  x_p^2 $$
for some $C_8$ bounded in $L^3$. Summation by parts here gives
$$\sum_{1 \leq p \leq n-1} \tilde{h}_p x_p x_{p+1}
=\sum_{1 \leq p \leq n-1} (\tilde{b}'_p -
\tilde{b}'_{p-1})  x_p x_{p+1}
= \sum_{1 \leq p \leq n-1} \tilde{b}'_p x_{p+1} (x_p - x_{p+2}),$$
where
$$\tilde{b}'_p := \sum_{k = 1}^p \tilde{h}_k.$$
Now, for $p \leq n/2$,
\begin{align*}\tilde{b}'_p x_{p+1} (x_p - x_{p+2})
&=  (1/10)(n-p)^{1/4} (x_p - x_{p+2}) \tilde{b}'_p (10(n-p)^{-1/4}) x_{p+1}
\\ & \leq \frac{1}{200} (n-p)^{1/2} (x_p - x_{p+2})^2
+ 50 (\tilde{b}'_p)^2 (n-p)^{-1/2} x_{p+1}^2
\\ & \leq \frac{1}{100} \sqrt{n-p} [(x_p - x_{p+1})^2 + (x_{p+1}- x_{p+2})^2]
+ (50\sqrt{2/n} )(\tilde{b}'_p)^2  x_{p+1}^2,
\end{align*}
whereas, for $p > n/2$,
$$ \tilde{b}'_p x_{p+1} (x_p - x_{p+2})
\leq |\tilde{b}'_p| (x_p^2 + x_{p+1}^2 + x_{p+2}^2).$$
Since for $p \leq n/2$,
$$  \frac{1}{100} (\sqrt{n-p} + \sqrt{n-p+1})
\leq  \frac{1}{100} (\sqrt{n-p} + \sqrt{(n-p)(1+(2/n))})
\leq \frac{1+\sqrt{3}}{100} \sqrt{n-p}$$
we deduce,
\begin{align*}
\sum_{1 \leq p \leq n-1} \tilde{h}_p
x_p  x_{p+1} & \leq \frac{1+\sqrt{3}}{100} \sum_{1 \leq p \leq n}
\sqrt{n-p} (x_p - x_{p+1})^2
\\ & + \sum_{1 \leq p \leq n}
\left( (50 \sqrt{2/n} )(\tilde{b}'_{p-1})^2  + (|\tilde{b}'_p|
+ |\tilde{b}'_{p-1}|+ |\tilde{b}'_{p-2}|) \mathds{1}_{p>n/2}
\right) x_p^2
\end{align*}
(with the convention $\tilde{b}'_{-1}= 0$ for $n = p = 1$).
We deduce that \eqref{boundH2} is proven if we check that for $1 \leq p \leq n$,
$$(100 \sqrt{2/n} )(\tilde{b}'_{p-1})^2
 + 2 (|\tilde{b}'_p|
+ |\tilde{b}'_{p-1}| +|\tilde{b}'_{p-2}| ) \mathds{1}_{p>n/2}
\leq p/(16 \sqrt{n}) + C_8 n^{-1/6}$$
for $C_8$ bounded in $L^3$.

As in the proof of \eqref{boundH1}, we get, for $\ell m(n) \leq k \leq
(\ell +1) m(n)$,
$$ |\tilde{b}'_k| \leq s'_0 + 2s'_{\ell} +
s'_{\ell + 1}\leq 2 (s'_0 + s'_{\ell} +
s'_{\ell + 1} )$$
and then for $\ell m(n) + 1 \leq k \leq
(\ell +1) m(n)$,
$$ |\tilde{b}'_k|, |\tilde{b}'_{k-1}| \leq 2 (s'_0 + s'_{\ell} +
s'_{\ell + 1} ),$$
$$|\tilde{b}'_{k-2}|  \leq  \sup( 2 (s'_0 + s'_{\ell} + s'_{\ell + 1}),  2 (s'_0 + s'_{\ell-1} + s'_{\ell}))
\leq 2 ( s'_0 +s'_{\ell-1}+  s'_{\ell} + s'_{\ell + 1})$$
with the convention $s'_{-1} = 0$, which implies
$$ |\tilde{b}'_k| + |\tilde{b}'_{k-1}| +  |\tilde{b}'_{k-2}|
\leq 6(s'_0 + s'_{\ell-1} + s'_{\ell} +
s'_{\ell + 1}), \; (\tilde{b}'_{k-1})^2
\leq 12  ((s'_0)^2 + (s'_{\ell})^2 + (s'_{\ell + 1})^2).$$
It is then enough to show, for $0 \leq \ell
\leq n/m(n)$,
$$1200 \sqrt{2/n} ((s'_0)^2 + (s'_{\ell})^2 + (s'_{\ell + 1})^2) +
12 (s'_0 + s'_{\ell-1} +  s'_{\ell} + s'_{\ell+1} ) \mathds{1}_{\ell > (n/2m(n)) - 1}
\leq \frac{\ell m(n)}{16\sqrt{n}} + C_8 n^{-1/6},$$
which is guaranteed if we check
$$s'_{\ell} \leq \frac{1}{10000}( \sqrt{\ell} + C_9) n^{1/6},$$
and
$$s'_{\ell} \leq \frac{\sqrt{n}}{10000} +
C_{10} n^{-1/6},$$
for $C_9$ and $C_{10}$ bounded in $L^6$.
This last estimate is proven exactly in the same way as in the proof of \eqref{boundH1}, by using the estimate
$$\mathbb{E}[(s'_{\ell})^q] \ll_{\beta,q} (m(n))^{q/2}.$$
\end{proof}
\noindent {\bf Acknowledgments.} B.V. was supported by the Canada
Research Chair program, the NSERC Discovery Accelerator grant, the MTA 
Momentum Random Spectra research group, and the ERC consolidator grant 
648017 (Abert).
\bibliographystyle{halpha}
\bibliography{biblinv}

\begin{thebibliography}{BLS18}

\bibitem[BLS18]{BLS18}
F.~Bekerman, T.~Lebl\'e, and S.~Serfaty.
\newblock {CLT} for fluctuations of $\beta $-ensembles with general potential.
\newblock {\em Electron. J. Probab.}, 23, 2018.

\bibitem[Bou09]{Bou09}
C.~Boutillier.
\newblock The bead model and limit behaviors of dimer models.
\newblock {\em Ann. Probab.}, 37(1):107--142, 2009.

\bibitem[CL95]{CL95}
O.~Costin and J.~L. Lebowitz.
\newblock Gaussian fluctuation in random matrices.
\newblock {\em Phys. Rev. Lett.}, 75(1):69--72, 1995.

\bibitem[DE01]{DE01}
P.~Diaconis and S.~N. Evans.
\newblock Linear functionals of eigenvalues of random matrices.
\newblock {\em Trans. Amer. Math. Soc.}, 353(7):2615--2633, 2001.

\bibitem[DE02]{DE02}
I.~Dumitriu and A.~Edelman.
\newblock Matrix models for beta-ensembles.
\newblock {\em J. Math. Phys}, pages 5830--5847, 2002.

\bibitem[DS94]{DS94}
P.~Diaconis and M.~Shahshahani.
\newblock On the eigenvalues of random matrices.
\newblock {\em J. Appl. Probab.}, 31A:49--62, 1994.
\newblock Studies in applied probability.

\bibitem[Gus05]{Gus05}
J.~Gustavsson.
\newblock Gaussian fluctuations of eigenvalues in the {GUE}.
\newblock {\em Ann. Inst. H. Poincar\'{e} Probab. Statist.}, 41(2):151--178,
  2005.

\bibitem[HL18]{Huang2018}
J.~Huang and B.~Landon.
\newblock Rigidity and a mesoscopic central limit theorem for dyson brownian
  motion for general $\beta$ and potentials.
\newblock {\em Probability Theory and Related Fields}, 2018.

\bibitem[Hua19]{Huang19}
J.~Huang.
\newblock Eigenvalues for the {M}inors of {W}igner {M}atrices.
\newblock http://arxiv.org/pdf/1907.10214, 2019.

\bibitem[JM15]{JM15}
T.~Jiang and S.~Matsumoto.
\newblock Moments of traces of circular beta-ensembles.
\newblock {\em Ann. Probab.}, 43(6):3279--3336, 2015.

\bibitem[Joh98]{Joh98}
K.~Johansson.
\newblock On fluctuations of eigenvalues of random {H}ermitian matrices.
\newblock {\em Duke Math. J.}, 91(1):151--204, 1998.

\bibitem[Kil08]{Kil08}
R.~Killip.
\newblock Gaussian fluctuations for {$\beta$} ensembles.
\newblock {\em Int. Math. Res. Not. IMRN}, (8):Art. ID rnn007, 19, 2008.

\bibitem[KN04]{bib:KN04}
R.~Killip and I.~Nenciu.
\newblock Matrix models for circular ensembles.
\newblock {\em Int. Math. Res. Not.}, (50):2665--2701, 2004.

\bibitem[KS09]{bib:KSt09}
R.~Killip and M.~Stoiciu.
\newblock Eigenvalue statistics for {CMV} matrices: from {P}oisson to clock via
  random matrix ensembles.
\newblock {\em Duke Math. J.}, 146(3):361--399, 2009.

\bibitem[Nak14]{Nakano}
F.~Nakano.
\newblock Level statistics for one-dimensional {S}chr\"odinger operators and
  {G}aussian beta ensemble.
\newblock {\em Journal of Stat. Phys.}, 156(1):66--93, 2014.

\bibitem[NV19]{NV19}
J.~Najnudel and B.~Vir\'ag.
\newblock The bead process for beta ensembles.
\newblock http://arxiv.org/pdf/1904.00848, 2019.

\bibitem[Shc13]{Shc13}
M.~Shcherbina.
\newblock Fluctuations of linear eigenvalue statistics of {$\beta$} matrix
  models in the multi-cut regime.
\newblock {\em J. Stat. Phys.}, 151(6):1004--1034, 2013.

\bibitem[Sos00]{Sos00}
A.~B. Soshnikov.
\newblock Gaussian fluctuation for the number of particles in {A}iry, {B}essel,
  sine, and other determinantal random point fields.
\newblock {\em J. Statist. Phys.}, 100(3-4):491--522, 2000.

\bibitem[Sos02]{Sos02}
A.~B. Soshnikov.
\newblock Gaussian limit for determinantal random point fields.
\newblock {\em Ann. Probab.}, 30(1):171--187, 2002.

\bibitem[Tro84]{Tro84}
H.~Trotter.
\newblock Eigenvalue distributions of larger {H}ermitian matrices; {W}igner's
  semi-circle law and a theorem of {K}ac, {M}urdock and {S}zeg\"o.
\newblock {\em Adv. in Math.}, 54(1):67--82, 1984.

\bibitem[VV09]{VV}
B.~Valk{\'o} and B.~Vir{\'a}g.
\newblock Continuum limits of random matrices and the {B}rownian carousel.
\newblock {\em Invent. Math.}, 177(3):463--508, 2009.

\end{thebibliography}

\end{document}